\numberwithin{equation}{section}
\let\@wraptoccontribs\wraptoccontribs
\newcommand{\PP}{\mathbb{P}}
\newcommand{\fp}{\mathfrak{p}}
\newcommand{\Spec}{\operatorname{Spec}}
\newcommand{\rk}{\operatorname{rk}}
\newcommand{\Eff}{\overline{\operatorname{Eff}}}
\newcommand{\Nef}{\operatorname{Nef}}
\newcommand{\Supp}{\operatorname{Supp}}
\newcommand{\Mor}{\operatorname{Mor}}
\newcommand{\Pic}{\operatorname{Pic}}
\newtheorem{theorem}{Theorem}[section]
\newtheorem{lemma}[theorem]{Lemma}
\newtheorem{proposition}[theorem]{Proposition}
\theoremstyle{definition}
\newtheorem{definition}[theorem]{Definition}
\newtheorem{notation}[theorem]{Notation}
\newtheorem{conjecture}[theorem]{Conjecture}
\newtheorem{remark}[theorem]{Remark}
\newtheorem{example}[theorem]{Example}
\definecolor{ao(english)}{rgb}{0.0, 0.5, 0.0}
\def\dasharrowfill@#1#2#3#4{%
        $\m@th
        \thickmuskip0mu
        \medmuskip\thickmuskip
        \thinmuskip\thickmuskip
        \relax
        #4#1\mkern2mu
        \xleaders\hbox{$#4\mkern2mu#2\mkern2mu$}\hfill
        \mkern2mu
        #3$%
}
\def\dashrightarrowfill@{\dasharrowfill@\relbar\relbar\rightarrow}
\providecommand*\xdashrightarrow[2][]{%
  \ext@arrow 0055{\dashrightarrowfill@}{#1}{#2}}
\begin{document}

\title{Free curves and fundamental groups}

\author{Eric Jovinelly}
\address{Department of Mathematics \\
Brown University \\
Box 1917 \\
151 Thayer Street \\
Providence, RI, 02912}
\email{eric\_jovinelly@brown.edu}

\author{Brian Lehmann}
\address{Department of Mathematics \\
Boston College  \\
Chestnut Hill, MA 02467}
\email{lehmannb@bc.edu}

\author{Eric Riedl}
\address{Department of Mathematics \\
University of Notre Dame  \\
255 Hurley Hall \\
Notre Dame, IN 46556}
\email{eriedl@nd.edu}

\contrib[with an appendix by]{Aise Johan de Jong}
\address{Department of Mathematics \\
Columbia University  \\
New York, NY 10027}
\email{dejong@math.columbia.edu}

\begin{abstract}
We show that klt Fano varieties and certain lc Fano varieties contain free higher-genus curves in their smooth loci.  
Our methods also allow us to find free curves on varieties in positive characteristic and on quasiprojective varieties, under a natural positivity condition on the tangent bundle.  We then use the existence of free curves to deduce finiteness of the fundamental group of the smooth locus in these settings.

The paper includes an appendix by de Jong that establishes the K{\"u}nneth formula for tame \'etale fundamental groups.
\end{abstract}

\maketitle

\section{Introduction}

Rational curves provide an essential tool for studying the geometry of a smooth complex Fano variety $X$.  A paragon is \cite{Mori79} in which Mori proved Hartshorne's Conjecture by studying the existence and deformation properties of rational curves.  A rational curve $s: \mathbb{P}^{1} \to X$ exhibits the best deformation-theoretic properties when $s^{*}T_{X}$ is positive: the map $s$ is said to be free if $s^{*}T_{X}$ is nef and very free if $s^{*}T_{X}$ is ample.  Using the innovative techniques of \cite{Mori79}, Koll\'ar showed that every smooth Fano variety carries a free rational curve (see \cite[Page 749]{Mori84}, \cite[Page 66]{MM86}).  The existence of very free curves on smooth Fano varieties was subsequently established by \cite{KMM92}.

When $X$ is a singular Fano variety, the situation is more subtle.  It is still true that $X$ carries many rational curves; for example, \cite{HM07} and \cite{Zhang06} prove that a klt Fano pair $(X,\Delta)$ admits a rational curve through two general points.  However, these rational curves can meet the singular locus of $X$ in which case they no longer exhibit good deformation-theoretic properties.  For potential applications it is crucial to find rational curves in the smooth locus of $X$.

The following well-known conjecture is motivated by work of Keel-M\textsuperscript{c}Kernan and Miyanishi-Tsunoda.  

\begin{conjecture} \label{conj:kltfano}
Let $(X,\Delta)$ be a klt Fano pair over an algebraically closed field of characteristic $0$.  Then there is a free rational curve in the smooth locus of $X$.
\end{conjecture}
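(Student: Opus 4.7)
The plan is to produce such a curve via a bend-and-break argument, combined with a dimension count that forces deformations away from the singular locus $X_{\sing}$. Since $(X,\Delta)$ is klt, $X$ is in particular normal with $\codim_X X_{\sing}\geq 2$, and this codimension is the basic positivity input that should make the construction possible.

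First I would use \cite{HM07}, \cite{Zhang06} to produce a dominant family $\MM$ of rational curves of $-K_X$-degree bounded by roughly $2\dim X$, with evaluation $\ev\colon \cU\to X$ surjective. I would then consider the sublocus $\MM_{\sing}\subset\MM$ of curves meeting $X_{\sing}$. A point-incidence condition at a point of $X_{\sing}$ cuts $\MM$ down by at most $\dim X-1$, while moving that point along $X_{\sing}$ restores only $\dim X_{\sing}$ parameters. Since $\codim_X X_{\sing}\geq 2$, this shows $\MM_{\sing}\subsetneq \MM$, so a generic member $f\colon\PP^1\to X$ already maps into $X_{\smooth}$.

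Next I would try to make $f$ free. The classical strategy is: if $f^*T_X$ fails to be nef, then $f$ is rigid on a subvariety, and bend-and-break through sufficiently many general points produces a reducible degeneration with a component of strictly smaller $-K_X$-degree, contradicting minimality within the family. Iterating on the bounded-degree family $\MM$ yields a free curve. To keep the argument honest on a singular variety, I would interpret $T_X$ on the smooth locus and work with the reflexive tangent sheaf elsewhere, or pass to a log resolution $\pi\colon Y\to X$ and use klt positivity (canonical bundle formula / subadjunction) to transport degree bounds back and forth between $Y$ and $X$.

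The main obstacle is controlling the \emph{limits} in bend-and-break. Even if the original family is generically in $X_{\smooth}$, nothing a priori prevents the broken components from sliding into $X_{\sing}$, exactly where $T_X$ loses its good behavior and where deformations can become obstructed. Overcoming this likely requires a pointed bend-and-break in which one fixes a general smooth point and exploits the codimension bound $\codim_X X_{\sing}\geq 2$ to argue that the locus of broken curves passing through it still avoids $X_{\sing}$; alternatively one might establish a discrepancy-weighted degree bound on $Y$ that rules out exceptional components in the limit. This is exactly where the klt (as opposed to merely lc) hypothesis should enter, and where the known surface case of Keel-M\textsuperscript{c}Kernan and Miyanishi-Tsunoda becomes significantly harder to generalize.
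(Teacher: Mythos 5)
This statement is labelled \emph{Conjecture} \ref{conj:kltfano} in the paper, and the authors explicitly state that it remains open in dimension $\geq 3$; the paper does not contain a proof of it. What the paper proves instead is the higher-genus analogue (Theorem \ref{theo:dltcase}, via the PQP for $T_X$, spreading out to characteristic $p$, and Frobenius amplification), together with the terminal Fano threefold case of the conjecture (Theorem \ref{theo:terminalcase}, obtained by degenerating those higher-genus free curves and invoking \cite[5.9 Corollary]{KM99}). So there is no ``paper's own proof'' for your argument to be compared against; it is a proof attempt of an open problem.

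That said, your outline has a genuine gap at the second step. The assertion that requiring a curve to pass through a point of $X_{\sing}$ cuts $\MM$ by codimension $\dim X - 1$ is a statement about the fibers of the evaluation map $\ev\colon\cU\to X$, and that codimension bound is only guaranteed when the parametrized curves are free. Without freeness the fiber dimension of $\ev$ can jump over $X_{\sing}$ (excess intersection), and it is perfectly possible that $\MM_{\sing}=\MM$, i.e.~that \emph{every} low-degree rational curve meets the singular locus. The ``well-known direct proof'' recalled in Section 4 of the paper illustrates exactly what is needed to make a dimension count of this kind rigorous: one uses the LCI hypothesis to get the lower bound $\dim M \geq -K_X\cdot s_*\PP^1 + \dim X$, passes to a resolution $\phi\colon Y\to X$ where the strict transforms are free so $\dim M = -K_Y\cdot s'_*\PP^1 + \dim Y$, and then uses $K_{Y/X}\geq 0$ (i.e.~\emph{canonical}, not just klt) to conclude. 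For a general klt Fano neither ingredient is available: $X$ need not be LCI, and $K_{Y/X}$ can have negative coefficients, so the discrepancy-weighted inequality goes the wrong way. You correctly identify in your last paragraph that bend-and-break limits can slide into $X_{\sing}$, but the same pathology already kills the earlier dimension count; in other words, the obstacle appears one step before you locate it. Any successful approach must first establish some a priori positivity of $f^{*}T_X$ along curves meeting $X_{\sing}$ --- this is precisely what the paper sidesteps by working with higher-genus curves in a complete-intersection class and then improving positivity via Frobenius.
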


Building on the earlier works  \cite{MT84a, MT84b, Zhang88}, \cite[1.6 Corollary]{KM99} verified this conjecture in dimension $2$: a klt Fano surface always admits a free rational curve in its smooth locus.  This result is a central step in Keel and M\textsuperscript{c}Kernan's groundbreaking work on the existence of $\mathbb{A}^{1}$-curves and also has a number of important applications in its own right.   In the decades since, the question of existence of rational curves in the smooth locus of a Fano variety has remained open, even for terminal Fano threefolds.  

\subsection{Main results: characteristic 0}
We take a novel approach: we relax the requirement that our curves be rational and instead search for curves of arbitrary genus.  The following definition identifies a natural analogue of freeness for higher genus curves.

\begin{definition} \label{defi:freecurve}
Let $X$ be a variety and let $C$ be a smooth proper integral curve.  Fix a non-negative number $r$.  A morphism $s: C \to X$ is an $r$-free curve if $s(C)$ is contained in the smooth locus $X^{sm} \subset X$ and every positive rank quotient of $s^{*}T_{X}$ has slope at least $2g(C) + r$.
\end{definition}

With this change in perspective, we are able to prove the analogue of Conjecture \ref{conj:kltfano} for higher genus curves.

\begin{theorem} \label{theo:dltcase}
Let $(X,\Delta)$ be a dlt Fano pair over an algebraically closed field of characteristic $0$.  Then there is a curve $C$ such that for every $r \geq 0$ there is an $r$-free curve $s: C \to X^{sm}$.
\end{theorem}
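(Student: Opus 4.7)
The plan is to produce, for each $r \geq 0$, a positive-dimensional family of $r$-free morphisms from smooth curves of some fixed genus $g$ into $X^{sm}$, and then extract a single curve $C$ of genus $g$ that serves as the source for at least one such morphism for every $r$, using that in characteristic $0$ a countable intersection of dense opens in $M_{g}$ is nonempty.

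First build a seed. Applying the theorems of Hacon--McKernan and Zhang to the klt perturbation $(X,(1-\epsilon)\Delta)$ gives a covering family of rational curves on $X$. Since $X$ is normal, $X^{sing}$ has codimension at least $2$, so general members of this family cut by sufficiently general ample hyperplanes yield a smooth movable curve $C_{0} \subset X^{sm}$. A Mehta--Ramanathan-type restriction argument combined with positivity of $-(K_{X}+\Delta)$ on the class of $C_{0}$ shows every Harder--Narasimhan slope of $s_{0}^{*}T_{X}$ is strictly positive; after passing to a fixed \'etale cover $\widetilde{C} \to C_{0}$ of sufficiently large degree one obtains a seed morphism $\widetilde{s}: \widetilde{C} \to X^{sm}$ whose HN slopes exceed $2g(\widetilde{C}) - 2$ by as much as required.

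Second, bootstrap. For each target $r$, attach many rational tails drawn from the covering family to $\widetilde{s}(\widetilde{C})$ at distinct smooth points, producing a nodal curve of arithmetic genus $g := g(\widetilde{C})$. Since $\widetilde{s}$ is already sufficiently free, the relevant deformation obstruction vanishes and the nodal curve smooths in $X$; local transversality at the smooth attachment points together with the codimension-$2$ bound on $X^{sing}$ permits choosing the smoothing so that a general member lies in $X^{sm}$. The resulting curves have genus $g$ and $-K_{X}$-degree growing linearly with the number of tails, and a Harder--Narasimhan computation guarantees every positive-rank quotient of the smoothed pullback bundle has slope at least $2g + r$, so the morphism is $r$-free. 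Varying the comb configuration (attachment points, choice of tails, smoothing parameter) sweeps out a dense open $U_{r} \subset M_{g}$ of abstract source curves. Since the base field is uncountable of characteristic $0$, the nested intersection $\bigcap_{r \geq 0} U_{r}$ is nonempty, and any $C$ in this intersection is the required fixed curve.

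The main obstacle is ensuring the smoothings land in $X^{sm}$ and that the loci $U_{r} \subset M_{g}$ are genuinely dense. The first requires a careful dlt-sensitive deformation argument using normality of $X$, the codimension bound on $X^{sing}$, and the snc structure of $(X,\Delta)$ on an open locus to show that general smoothings avoid $X^{sing}$ even when the attached tails do not. The second requires enough moduli-theoretic freedom in the comb construction to dominate $M_{g}$, which in turn dictates how large $g$ and the number of tails must be chosen. Coordinating these two geometric inputs with the Harder--Narasimhan analysis, so that $r$-freeness of the smoothed morphism can be read off from the tail count while keeping the smoothed curve in $X^{sm}$, is the technical heart of the proof.
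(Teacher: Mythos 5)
Your proposal has a fatal gap at the \'etale-cover step, and it is exactly the gap that forces the paper to spread out to characteristic $p$. You claim that after passing to a high-degree \'etale cover $\widetilde{C}\to C_{0}$ the Harder--Narasimhan slopes of $\widetilde{s}^{*}T_{X}$ exceed $2g(\widetilde{C})-2$ by as much as required. This cannot work: if $\pi:\widetilde{C}\to C_{0}$ is \'etale of degree $d$, every HN slope of $\pi^{*}(s_{0}^{*}T_{X})$ scales by $d$, while Riemann--Hurwitz gives $2g(\widetilde{C})-2 = d\bigl(2g(C_{0})-2\bigr)$, so the ratio $\mu^{\min}/(2g-2)$ is unchanged. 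Worse, for a Mehta--Ramanathan complete intersection curve $C_{0}\in |mH|^{n-1}$ the genus grows like $m^{n}$ while $\mu^{\min}(s_{0}^{*}T_{X})$ grows only like $m^{n-1}$, so the seed already has $\mu^{\min}\ll 2g(C_{0})$, and no characteristic-zero covering trick can reverse this. The paper circumvents this by spreading to characteristic $p$ and precomposing with an iterated Frobenius (Theorem~\ref{theo:positivetofree}): Frobenius multiplies all slopes by $p$ while leaving the genus of the source curve fixed, which is precisely the asymmetry your argument lacks; it then deforms the resulting free curve back to the generic fiber.

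Two further issues compound this. First, your comb step attaches rational tails drawn from the Hacon--M\textsuperscript{c}Kernan/Zhang covering family, but those rational curves need not lie in $X^{sm}$ (indeed, whether klt Fano varieties carry free rational curves in the smooth locus is precisely Conjecture~\ref{conj:kltfano}), so the smoothing has no reason to avoid $X^{sing}$; the paper never attaches tails and instead exploits that Frobenius precomposition preserves the image. Second, the positivity $\mu^{\min}(s_{0}^{*}T_{X})>0$ does not follow from nefness of $-(K_{X}+\Delta)$ plus Mehta--Ramanathan alone: positivity of the anticanonical bounds $\det T_{X}$, not the slope of an arbitrary torsion-free quotient. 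That statement is the content of Ou's theorem (Theorem~\ref{theo:ou} / Proposition~\ref{prop:kltpqp}), which is an essential input the paper invokes and your sketch omits. Finally, the paper first passes to a $\mathbb{Q}$-factorialization with a klt Fano structure (Section~\ref{sect:MMP}), a reduction your argument would also need before the slope-stability machinery applies, and your claim that comb-smoothings sweep out a dense open of $M_{g}$ is stated but not argued.
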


Using this result and \cite{JLR25}, we take the next step toward the original Conjecture \ref{conj:kltfano} in higher dimensions.

\begin{theorem} \label{theo:terminalcase}
Let $X$ be a terminal Fano threefold over an algebraically closed field of characteristic $0$.  Then there is a free rational curve $s: \mathbb{P}^{1} \to X^{sm}$.
\end{theorem}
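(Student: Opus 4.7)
The plan is to combine Theorem~\ref{theo:dltcase} with Mori's bend-and-break method, and then to invoke results from \cite{JLR25} to ensure the rational curve obtained lies in $X^{sm}$. The key structural feature of the terminal threefold case is that $X^{sing}$ consists of finitely many points, which is what makes avoiding it at all plausible.

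Since $(X,0)$ is terminal and hence dlt, Theorem~\ref{theo:dltcase} supplies a fixed smooth proper curve $C$ of some genus $g$ and, for every $r \geq 0$, an $r$-free morphism $s_{r}\colon C \to X^{sm}$. By Definition~\ref{defi:freecurve}, each positive rank quotient of $s_{r}^{*}T_{X}$ has slope at least $2g+r$; in particular $\deg s_{r}^{*}(-K_{X}) \geq 3(2g+r)$, and the $\operatorname{Hom}$ scheme parametrizing deformations of $s_{r}$ which fix the images of two prescribed smooth points of $C$ has expected dimension growing linearly in $r$. All such deformations remain in $X^{sm}$ by openness. For $r$ sufficiently large, Mori's bend-and-break argument \cite{Mori79} applied to this family forces the domain to degenerate, producing a rational curve $f\colon \PP^{1} \to X$ of anticanonical degree at most $\dim(X) + 1 = 4$ that passes through a prescribed general point.

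The central difficulty is that the rational component $f$ extracted by bend-and-break need not have image in $X^{sm}$, even though the original higher-genus curves do: the image of a limit fiber can meet the closed set $X^{sing}$ even when every general fiber avoids it. This is precisely the point at which \cite{JLR25} enters: it supplies geometric control on low $-K_{X}$-degree rational curves on terminal Fano threefolds sufficient—together with the abundant deformations coming from Theorem~\ref{theo:dltcase}—to guarantee that we can choose $f$ with image in $X^{sm}$. Once $f(\PP^{1}) \subset X^{sm}$, the positivity inherited from $s_{r}^{*}T_{X}$ combined with standard deformation-theoretic arguments on the smooth Fano locus upgrades $f$ to a free rational curve. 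The main obstacle is thus not producing \emph{some} rational curve—that follows immediately from bend-and-break—but producing one whose image avoids the finitely many terminal singular points; this coordination between the flexibility from Theorem~\ref{theo:dltcase} and the control from \cite{JLR25} is the technical heart of the argument.
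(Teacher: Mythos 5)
Your high-level outline (start from Theorem~\ref{theo:dltcase}, degenerate the higher-genus free curves, use \cite{JLR25}) does match the paper's strategy, and you correctly identify the central difficulty: the rational components of the degeneration can meet $X^{\sing}$ even though the original curves do not. However, the proposal has a genuine gap at exactly this point—the step you label ``the technical heart of the argument'' is asserted rather than proved, and the assertion does not reflect what is actually needed.

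The paper does \emph{not} conclude directly that one can ``choose $f$ with image in $X^{sm}$.'' Instead it proves an intermediate result (Theorem~\ref{theo:terminaldimn}) that there is a dominant family of rational curves meeting $X^{\sing}$ at most $\dim X - 1 = 2$ times, and only then invokes \cite[5.9~Corollary]{KM99}, which upgrades ``dominant family of rational curves meeting the singular locus at most twice on a terminal Fano with LCIQ singularities'' to ``free rational curve in the smooth locus.'' You omit this Keel--M\textsuperscript{c}Kernan input entirely; without it, having a family of rational curves that merely \emph{touches} $X^{\sing}$ in a controlled way does not give a curve avoiding $X^{\sing}$. The restriction to dimension~3 in the statement comes precisely from the hypotheses of that corollary (terminal threefolds have LCIQ singularities, \cite[Corollary~5.39]{KM98}), not from ``$X^{\sing}$ being a finite set of points'' as your plan suggests.

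Moreover, your description of the role of \cite{JLR25} is not accurate. The paper uses \cite[Proposition~2.2]{JLR25} not to ``control low $-K_X$-degree rational curves'' but to control the \emph{structure of the stable map degeneration}: it produces a stable limit whose domain contains the original component $C_0 \cong C$ together with $k$ trees of rational curves $T_i$ each containing a component with free image, where $k$ grows linearly in $r$. The proof of Theorem~\ref{theo:terminaldimn} is then a degree count: if every free rational component met $X^{\sing}$ at least $n$ times, then by Lemmas~\ref{lemm:localint} and~\ref{lem:negativeKX'degree} each tree $T_i$ would contribute at least $n + 1 + a_{\min}$ to the $-K_X$-degree on a resolution, and summing over the $k$ trees exceeds the total degree $d$---a contradiction. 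This counting argument is the actual content; your proposal contains no analogue of it, and bend-and-break by itself (which only produces a rational curve of bounded degree through a point) gives no information at all about intersections with $X^{\sing}$.

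In short: you correctly locate the problem but the proposal does not solve it. You need (i) the degeneration structure from \cite[Proposition~2.2]{JLR25}, (ii) the discrepancy and exceptional-divisor counting of Lemmas~\ref{lemm:localint} and~\ref{lem:negativeKX'degree} to deduce that some free rational component meets $X^{\sing}$ at most twice, and (iii) \cite[5.9~Corollary]{KM99} to pass from ``meets $X^{\sing}$ at most twice'' to ``avoids $X^{\sing}$ entirely.''
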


Our techniques are applicable to lc Fano pairs $(X,\Delta)$.  Note that such an $X$ need not contain any free curves at all.  (The simplest counterexample is a cone over an elliptic curve.)  Instead, we show that the existence of free curves can be precisely accounted for by the positivity of the tangent bundle.  Furthermore, this class of lc Fano pairs shares many of the nice geometric properties of klt Fano pairs; see Section \ref{sect:dichotomy}.

\begin{theorem} \label{theo:lccase}
Let $(X,\Delta)$ be a $\mathbb{Q}$-factorial lc Fano pair over an algebraically closed field of characteristic $0$.  Then the following conditions are equivalent:
\begin{enumerate}
\item $X$ admits a $1$-free curve.
\item There is no positive rank quotient of $T_{X}$ with numerically trivial first Chern class. 
\item For every $m > 0$ we have $h^{0}(X^{sm},\Omega_{X^{sm}}^{\otimes m}) = 0$.
\end{enumerate}
If these equivalent conditions hold, then there is a curve $C$ such that for every $r \geq 0$ there is an $r$-free curve $s: C \to X$.
\end{theorem}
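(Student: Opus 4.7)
I would prove $(1) \Rightarrow (2) \Rightarrow (3) \Rightarrow (1)$ and then bootstrap the $1$-free curve produced at the end into $r$-free curves on a common source $C$ for every $r$, using the comb-smoothing and gluing techniques set up earlier in the paper. The implication $(1) \Rightarrow (2)$ is direct: pulling back a hypothetical positive-rank quotient $T_X \twoheadrightarrow Q$ with $c_1(Q) \equiv 0$ along a $1$-free curve $s : C \to X^{sm}$ gives a positive-rank quotient of $s^*T_X$ of degree zero, violating the slope bound $\mu(s^*Q) \geq 2g(C) + 1$.

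For $(2) \Rightarrow (3)$ I would argue by contrapositive and duality. A nonzero section of $\Omega_{X^{sm}}^{\otimes m}$ dualizes to a nonzero map $T_X^{\otimes m}|_{X^{sm}} \to \OO_{X^{sm}}$ whose image is a rank-one sheaf of nonpositive first Chern class. Since $c_1(T_X) = -K_X$ is ample, $T_X$ has strictly positive slope against every nef polarization, so Ramanan--Ramanathan semistability of tensor products (refined with respect to movable classes in the style of Greb--Kebekus--Peternell) forces $T_X$ itself to carry a destabilizing quotient. A Harder--Narasimhan refinement then yields a positive-rank quotient of $T_X$ with numerically trivial first Chern class, contradicting (2).

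The heart of the theorem is $(3) \Rightarrow (1)$. I would adapt the construction that proves Theorem \ref{theo:dltcase}, which extracts a $1$-free curve from a dlt Fano pair via an MMP reduction together with bend-and-break. In the lc setting the same machinery either produces a $1$-free curve directly or else outputs a ``flat'' positive-rank quotient $Q$ of $T_X$ supported along the non-klt part of $\Delta$, whose first Chern class is numerically trivial. The latter is incompatible with (3): since numerically trivial line bundles on a $\mathbb{Q}$-factorial lc Fano are torsion, a trivializing power of $\det Q^*$ embeds as a nonzero section of a sufficiently high tensor power of $\Omega_{X^{sm}}$. Hence only the first output occurs, and the desired $1$-free curve exists.

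Finally, attaching copies of the $1$-free curve $s : C \to X^{sm}$ at smooth points of $C$ and smoothing the resulting combs (using the smoothing lemma established earlier in the paper) successively raises the positivity of $s^*T_X$ while preserving the source $C$; iterating yields an $r$-free curve on $C$ for any prescribed $r \geq 0$. The main obstacle is $(3) \Rightarrow (1)$: adapting the dlt construction to the lc setting requires tracing exactly how the non-klt part of $\Delta$ produces the ``flat'' obstruction, and verifying that this obstruction is captured precisely by the failure of (3).
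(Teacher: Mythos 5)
Your overall cycle $(1)\Rightarrow(2)\Rightarrow(3)\Rightarrow(1)$ is logically equivalent to the paper's (which runs, in your numbering, $(2)\Rightarrow(1)\Rightarrow(3)\Rightarrow(2)$), and your $(1)\Rightarrow(2)$ is a clean direct argument the paper does not give. However, there are three places where the proposal either relies on a false general fact or uses a technique that does not achieve what is needed.

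First, in $(3)\Rightarrow(1)$ you write ``since numerically trivial line bundles on a $\mathbb{Q}$-factorial lc Fano are torsion.'' This is \emph{not} a general fact in the lc setting: a cone over an elliptic curve of high degree is an lc Fano whose resolution is a $\mathbb{P}^1$-bundle over the elliptic curve, so $\operatorname{Pic}^0$ of a resolution is nontrivial and numerically trivial divisors need not be torsion. The correct argument is the paper's: condition (3) implies $h^0(Y,\Omega_Y^{\otimes m})=0$ on a resolution $Y\to X$, whence $\operatorname{Pic}^0_{Y}=0$, and \emph{then} numerically trivial Weil divisors on the $\mathbb{Q}$-factorial $X$ are torsion. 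Without deriving this from (3), your argument for $(3)\Rightarrow(1)$ collapses. Relatedly, the mechanism you describe (``the dlt construction either produces a $1$-free curve or outputs a flat quotient $Q$ supported along the non-klt part of $\Delta$'') does not match how the free curve is actually manufactured: the paper first establishes the positive quotient property from (3) via the torsion argument, then uses Proposition \ref{prop:lccurveexists} to produce a complete intersection curve with $\mu^{\min}(s^*T_X)>0$, and finally applies Theorem \ref{theo:positivetofree}.

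Second, in $(2)\Rightarrow(3)$ the gesture toward Ramanan--Ramanathan / Greb--Kebekus--Peternell tensor-product semistability is the right idea, but as stated it does not produce a quotient of $T_X$ with \emph{numerically trivial} first Chern class; it only shows $\mu^{\min}_\alpha(T_X)\le 0$ for $\alpha$ in the interior of $\Nef_1(X)$. To upgrade this to numerical triviality you must invoke Theorem \ref{theo:ou} to know that every torsion-free quotient of $T_X$ has pseudoeffective $c_1$, and then combine ``pseudoeffective'' with ``vanishing against an interior nef class.'' This is fixable but is a genuine missing ingredient in your write-up.

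Third, and most seriously, the final bootstrap to $r$-free curves via comb-smoothing does not produce what the theorem demands, namely a single fixed curve $C$ that carries $r$-free maps for \emph{every} $r$. Attaching copies of $s:C\to X$ as teeth increases the arithmetic genus by $g(C)$ per tooth, so smoothing changes the genus entirely. Even attaching rational teeth, which preserves the arithmetic genus, produces a smoothing whose underlying curve is a nearby genus-$g$ curve, not $C$ itself, and there is no a priori reason the forgetful map to $\mathcal{M}_{g}$ from the relevant component hits $[C]$. The paper's mechanism (Theorem \ref{theo:positivetofree}: spread out to characteristic $p$, precompose with a high Frobenius iterate, deform back to characteristic $0$) is designed precisely to boost $\mu^{\min}(s^*T_X)$ while fixing the source curve $C$, since Frobenius is a self-map of $\mathcal{C}_{\mathfrak{p}}$. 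You need this technique, or a substitute that genuinely fixes $C$, to conclude.
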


Just as with free rational curves, the existence of a $1$-free curve of arbitrary genus has a number of important geometric consequences.  The most significant is finiteness of the fundamental group.  Recently there have been many results concerning fundamental groups associated to klt singularities; see e.g.~\cite{Xu14}, \cite{GKP16a}, \cite{BGO17}, \cite{TX17}, \cite{CRST18}, \cite{BCRGST19}, \cite{Braun21}, \cite{CarvajalStabler}.  In combination with Theorem \ref{theo:lccase}, the following result extends our understanding to the log canonical case.

\begin{theorem} \label{theo:firstfundgroups}
Let $X$ be a projective variety over $\mathbb{C}$.  If $X$ admits a $1$-free curve, then $X^{sm}$ has finite topological fundamental group.
\end{theorem}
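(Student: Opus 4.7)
The plan is to reduce finiteness of $\pi_{1}^{\mathrm{top}}(X^{sm})$ to a uniform bound across all finite connected \'etale covers $\pi:Y\to X^{sm}$, by pulling back the $1$-free curve and exploiting the resulting high positivity. Since $s$ is $1$-free, every positive rank quotient of $s^{*}T_{X}(-p-q)$ has slope at least $2g(C)-1>2g(C)-2$, so $h^{1}(C,s^{*}T_{X}(-p-q))=0$, and deformation theory produces a smooth base $U$ and a covering family $\mathcal{C}=C\times U\to U$ with dominant evaluation $\ev:\mathcal{C}\to X^{sm}$ whose members pass through any two general points.

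Given $\pi:Y\to X^{sm}$ of degree $d$, let $\tilde{C}$ be a connected component of $C\times_{X^{sm}}Y$; the projection $\tilde{\sigma}:\tilde{C}\to C$ is \'etale of some degree $d'\mid d$, and the second projection $\tilde{s}:\tilde{C}\to Y$ satisfies $\tilde{s}^{*}T_{Y}=\tilde{\sigma}^{*}s^{*}T_{X}$. In characteristic zero, \'etale pullback preserves the Harder--Narasimhan filtration on vector bundles on curves and multiplies slopes by $d'$, so every positive rank quotient of $\tilde{s}^{*}T_{Y}$ has slope at least $d'(2g(C)+1)$. By Riemann--Hurwitz $2g(\tilde{C})=d'(2g(C)-2)+2$, so this rewrites as $2g(\tilde{C})+(3d'-2)$; hence $\tilde{s}$ is $(3d'-2)$-free on $Y$, becoming very free once $d'$ is large.

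For $d'$ sufficiently large, the deformation space of $\tilde{s}$ fixing two points of $\tilde{C}$ is positive-dimensional, so Mori's bend-and-break applied to a one-parameter subfamily degenerates $\tilde{s}$ off a free rational curve inside $Y$. A free rational curve in the smooth variety $Y$ bounds $\pi_{1}^{\mathrm{top}}(Y)$ via the Campana--Koll\'ar--Miyaoka--Mori circle of ideas, with a bound depending only on the original data $s:C\to X^{sm}$. Since $\pi_{1}^{\mathrm{top}}(Y)$ sits inside $\pi_{1}^{\mathrm{top}}(X^{sm})$ as an index-$d$ subgroup, the same construction also forbids covers of arbitrarily large degree $d$ (any such cover would carry a hyper-free pullback curve of the above type, collapsing the cover), yielding finiteness of $\pi_{1}^{\mathrm{top}}(X^{sm})$.

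The main obstacle will be the bend-and-break step on the open variety $Y$: classical bend-and-break is formulated for projective targets, and compactifying $Y$ to run the argument could produce a rational curve escaping into the boundary $\pi^{-1}(X^{sing})$ rather than remaining in $Y$. Resolving this requires exploiting the strong freeness of $\tilde{s}$ to force degenerations to stay inside $Y$, analogous to the smooth-locus arguments required for Theorems \ref{theo:dltcase} and \ref{theo:terminalcase}. A further subtlety is passing between \'etale and topological fundamental groups uniformly across all covers $Y$, which is presumably handled by the Künneth formula for tame \'etale fundamental groups established in de Jong's appendix.
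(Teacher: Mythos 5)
Your approach differs fundamentally from the paper's and contains gaps that would be difficult to fill.

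The paper does not use bend-and-break, free rational curves, or a cover-by-cover bound at all. Instead, Proposition \ref{prop:freeproperties}.(2) gives that the two-point evaluation map $C\times C\times M \to C\times C\times X^{sm}\times X^{sm}$ is dominant, and then Theorem \ref{theo:firstetalefundgroup} proves finiteness of $\pi_1(U)$ directly from this dominance by a purely group-theoretic argument: fixing one point of $C$ to a base point $x$ gives a dominant family $e:C\times S\to U$ that contracts the section $\{c\}\times S$ to $x$; via the K\"unneth splitting $\pi_1(C\times S)\cong\pi_1(C)\times\pi_1(S)$ the image of the family equals $s_*\pi_1(C)$, which therefore has finite index in $\pi_1(U)$. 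A second auxiliary family through two fixed general points is then used to show $s_*\pi_1(C)$ itself is finite (the section through the first fixed point is contracted, so by the product splitting and conjugacy the image of the section through the second fixed point is trivial, yet it has finite index in $s_*\pi_1(C)$ by dominance). Note that for the projective statement only the classical K\"unneth for $\pi_1$ of a product with one proper factor is needed; de Jong's appendix on \emph{tame} fundamental groups is invoked only for the log statement Theorem \ref{theo:logfundgroup}, where $C$ is non-compact.

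Your route has concrete problems beyond the obstacle you flag. Once bend-and-break has been carried out (supposing the degeneration can be forced to stay in $Y$), the resulting free rational curve lives on a \emph{non-proper} smooth variety $Y=\pi^{-1}(X^{sm})$, and the Campana--Koll\'ar--Miyaoka--Mori bound on $\pi_1$ requires properness; a free rational curve on a quasi-projective variety gives no bound on $\pi_1$ without some further argument (indeed that further argument, if done carefully, ends up being essentially a version of Theorem \ref{theo:firstetalefundgroup}). Your closing step --- that a hyper-free pullback curve on $Y$ ``collapses the cover'' and hence forbids large $d$ --- is not substantiated: the slope and genus of $\tilde C$ both scale linearly in $d'$, so the $(3d'-2)$-freeness you compute gives no contradiction with large $d$. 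Finally, the detour is logically unnecessary: your first paragraph already constructs a dominant two-pointed family out of the $1$-free curve on $X^{sm}$ itself, which is exactly the input the paper's Theorem \ref{theo:firstetalefundgroup} converts into finiteness, with no need to pass to covers or to produce rational curves.
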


In particular, we obtain a new (and short) proof of the finiteness of the topological fundamental group of the smooth locus of a klt Fano pair first proved by \cite{Braun21}.

\subsection{Main results: characteristic $p$}

Much less is known about the existence of free curves in characteristic $p$.  A famous open problem of Koll\'ar asks whether a smooth Fano variety over an algebraically closed field of characteristic $p$ is separably rationally connected.  If we allow singularities, the answer is no: counterexamples are given in \cite{Kollar95} and \cite[Section 5]{Xu12}.  These examples use a ``negative'' quotient of the tangent bundle to obstruct the existence of free rational curves.  In particular, they demonstrate that in characteristic $p$ we must impose stronger assumptions on the possible quotients of the tangent bundle to obtain free curves. 

The following theorem provides a new link in the relationship between positivity of the tangent bundle and existence of free curves, in the same direction as earlier work by \cite{Miyaoka85, KSCT07, SCT09, Shen10, Tian15, Gounelas16, BM16, CP19} and many others.  The positivity assumption is phrased in terms of Mumford-Takemoto slope stability. 

\begin{theorem} \label{theo:maintheoremcharp}
Let $X$ be a normal projective variety of dimension $n$ over an algebraically closed field.  Suppose that there is an ample divisor $H$ such that $\mu^{min}_{H^{n-1}}(T_{X}) > 0$.
Then there is a curve $C$ such that for every $r \geq 0$ there is an $r$-free curve $s: C \to X^{sm}$. 
\end{theorem}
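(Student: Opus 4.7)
The plan is to construct a fixed smooth projective curve $C$ in the smooth locus of $X$ and then produce a family of increasingly positive morphisms $C \to X^{sm}$ by iterating the absolute Frobenius on the source.

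The first step is to produce the initial curve via the Mehta--Ramanathan restriction theorem. For $m \gg 0$, a general complete intersection curve $C = D_1 \cap \cdots \cap D_{n-1}$ with $D_i \in |mH|$ is smooth and has the property that the Harder--Narasimhan filtration of $T_X|_C$ is the restriction of the $H^{n-1}$-HN filtration of $T_X$. Consequently $\mu^{\min}(T_X|_C) = m^{n-1}\, \mu^{\min}_{H^{n-1}}(T_X) > 0$. Because $X$ is normal, its singular locus has codimension at least $2$, so a general complete intersection of codimension $n-1$ lies entirely in $X^{sm}$; denote the inclusion by $\iota : C \hookrightarrow X^{sm}$.

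The second step boosts positivity via Frobenius. When $\cha(k) = p > 0$, let $F_C : C \to C$ be the absolute Frobenius and set $s_k := \iota \circ F_C^k : C \to X^{sm}$. Because the absolute Frobenius is a self-morphism, each $s_k$ has source equal to the fixed curve $C$, matching the formulation of the theorem. The pullback $s_k^* T_X = (F_C^k)^*(T_X|_C)$ has mean slope $p^k \mu(T_X|_C)$, which grows to infinity.

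The main obstacle is to show that $\mu^{\min}(s_k^* T_X)$ likewise tends to infinity; in positive characteristic, Frobenius pullback can strictly decrease the minimum slope (Frobenius destabilization). The argument should use Langer's theory of strong Harder--Narasimhan filtrations: after finitely many Frobenius pullbacks, the HN filtration of $(F_C^k)^*(T_X|_C)$ stabilizes and its graded pieces are strongly semistable, so from that point on $\mu^{\min}$ is multiplied exactly by $p$ under each additional Frobenius. It then remains to verify that the stabilized minimum slope is positive, combining quantitative Sun--Langer-type bounds on the Frobenius drop with the initial positivity of $\mu^{\min}(T_X|_C)$ (possibly taking $m$ large so that the initial slope safely dominates the bounded correction). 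Once $\mu^{\min}(s_k^* T_X) \to \infty$ is established, choosing $k$ large enough for any given $r$ yields $\mu^{\min}(s_k^* T_X) \geq 2g(C) + r$, so $s_k$ is $r$-free. In characteristic $0$, Frobenius is unavailable; instead one adapts the gluing-and-smoothing machinery used for Theorems~\ref{theo:dltcase} and \ref{theo:lccase} to produce $r$-free curves with fixed source from the positivity assumption on $T_X$.
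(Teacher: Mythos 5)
Your first step is correct and matches the paper: a general complete intersection curve $C$ in $|mH|$ (using the restriction theorems of Flenner/Langer rather than Mehta--Ramanathan, but this is essentially the same move) lies in $X^{sm}$ and satisfies $\mu^{\min}(T_X|_C) > 0$; this is exactly the content of Proposition~\ref{prop:lccurveexists}.

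For the characteristic $p$ Frobenius step, you have the right shape of argument but you do not close the one gap you correctly identify, namely that Frobenius can destabilize and so $\mu^{\min}$ might a priori drop. Appealing to Langer's strong HN filtration theory plus quantitative Sun--Langer bounds is both an overcomplication and, as written, incomplete: you defer the key verification (``it then remains to verify that the stabilized minimum slope is positive'') and the bounds you cite control the spread of slopes, not their sign. The paper's Lemma~\ref{lemm:frobpullback} uses a much more direct observation: on a curve, $\mu^{\min}(\mathcal{E}) > 0$ is equivalent to $\mathcal{E}$ being ample, and ampleness is preserved by (finite surjective) Frobenius pullback. Hence $\mu^{\min}(F^{k*}(T_X|_C)) > 0$ for every $k$, and the moment the HN filtration becomes nontrivial one inducts on rank because every graded piece is still ample and has strictly smaller rank. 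You should replace the Langer machinery with this observation.

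The genuine gap is your treatment of characteristic $0$. You cannot invoke ``gluing-and-smoothing machinery used for Theorems~\ref{theo:dltcase} and~\ref{theo:lccase}'': those theorems are themselves deduced from Theorem~\ref{theo:positivetofree}, which \emph{is} the missing ingredient, and the gluing-and-smoothing in the paper only appears in Theorem~\ref{theo:kltallclasses} for a different purpose (representing specific nef rays). The actual characteristic $0$ argument is Mori's reduction mod $p$: spread out $(X, C, s)$ over a finitely generated $\mathbb{Z}$-algebra $R$, use Nitsure's semicontinuity to find a closed fiber where the HN slopes of $s_{\mathfrak{p}}^*T_{\mathcal{X}_{\mathfrak{p}}}$ are unchanged, apply the Frobenius lemma there to produce an $r$-free curve $s'_{\mathfrak{p}}$, and then use the vanishing of $H^1(\mathcal{C}_{\mathfrak{p}}, s_{\mathfrak{p}}'^*T_{\mathcal{X}_{\mathfrak{p}}})$ to show the relevant component of $\Mor_R(\mathcal{C},\mathcal{X})$ dominates $\Spec(R)$, so the free curve deforms back to the generic fiber. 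Without this spreading-out-and-lifting argument, the characteristic $0$ case is simply not proved by what you have written.
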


In characteristic $p$, this theorem is interesting even when $X$ is smooth; for example, it applies whenever $X$ is a smooth Fano variety with a stable tangent bundle.  
Just as in characteristic $0$, we can use $1$-free curves to deduce finiteness of fundamental groups.

\begin{theorem} \label{theo:etalefundgroup}
Let $X$ be a projective variety over an algebraically closed field.  Suppose that $X$ carries a $1$-free curve. Then the \'etale fundamental group of $X^{sm}$ is finite.
\end{theorem}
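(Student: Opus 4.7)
The plan is to pull back any finite tame \'etale cover of $X^{sm}$ along a universal family of highly free curves, apply the tame K\"unneth formula from the appendix to decompose the cover, and then bound its degree; wild covers in positive characteristic are addressed separately by a positivity argument on $T_X$.

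First I would bootstrap: from the given $1$-free curve $s_{0} : C_{0} \to X^{sm}$, produce an $r$-free curve $s : C \to X^{sm}$ for every $r \geq 0$ by attaching copies of $s_{0}$ to a fixed base curve at smooth points of the target, forming a comb, and smoothing the resulting nodal curve. Slopes of quotients of the pullback of $T_{X}$ add additively across the components while the genus grows only linearly in the number of teeth, so for enough teeth the positivity exceeds $2g(C)+r$ for any prescribed $r$.

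Next, fix such an $r$-free curve $s$ with $r$ large, and let $\MM$ be the component of $\Mor(C,X)$ containing $[s]$. By $r$-freeness, $\MM$ is smooth at $[s]$ and the universal evaluation $u : \MM \times C \to X$ is smooth on an open $U \subseteq \MM \times C$ whose complement has codimension at least $2$. Given a connected finite tame \'etale cover $f : Y \to X^{sm}$, pull $f$ back along $u|_{U \cap u^{-1}(X^{sm})}$; purity of the branch locus (Zariski--Nagata) then extends this to a tame \'etale cover of $\MM \times C$ after passing to a suitable smooth compactification. The tame K\"unneth formula of the appendix writes this extended cover as an external product of tame \'etale covers $\alpha \to \MM$ and $\beta \to C$, so that $\deg f = \deg \alpha \cdot \deg \beta$. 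Restricting to $\{[s]\} \times C$ identifies $\beta$ with $s^{*}f$, and restricting to $\MM \times \{p\}$ for a general $p \in C$, where $u$ is dominant onto $X^{sm}$, forces $\alpha$ itself to factor through $f$. Combining these two compatibilities while letting $s$ vary through deformations that sweep $X^{sm}$ should pin both factors down to finitely many possibilities, giving a uniform bound on $\deg f$.

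The main obstacle is this last compatibility step: converting the formal K\"unneth decomposition into an effective numerical bound on $\deg f$. In the topological setting of Theorem \ref{theo:firstfundgroups} one can use the ordinary K\"unneth for $\pi_{1}$ together with open subsets of $\MM$ coming from unobstructed deformations that are simply connected, but in the \'etale setting one must track tameness throughout and rule out unbounded tame covers of $\MM$ itself, likely by exhibiting rational or simply connected open subsets of $\MM$ coming from a free chain construction. To handle the full (possibly wild) \'etale fundamental group in positive characteristic, I would separately argue that a nontrivial wildly ramified cover would produce, after pullback along $s$, a cover of $C$ whose ramification is incompatible with the high positivity of $s^{*}T_{X}$; this would reduce the full \'etale case to the tame case treated above.
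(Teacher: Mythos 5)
Your proposal diverges from the paper both in strategy and in completeness, and the point where it breaks down is exactly the one you flag as the ``main obstacle'': you never actually convert the K\"unneth decomposition into a bound on $\deg f$, and there is no straightforward way to do so along the lines you sketch. A K\"unneth isomorphism $\pi_1^{t}(\MM\times C)\cong\pi_1^{t}(\MM)\times\pi_1^{t}(C)$ does not write a given connected cover as an external product; it only says the corresponding $\pi_1$-set factors through the product group. Your claim $\deg f=\deg\alpha\cdot\deg\beta$ does not hold, and ``pinning both factors down to finitely many possibilities'' by letting $s$ vary is not a proof. You also do not rule out that $\pi_1^{t}(\MM)$ is itself enormous, and your suggestion of exhibiting ``simply connected open subsets of $\MM$'' is not something the paper needs or has available.

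A second, more conceptual misstep is reaching for the tame K\"unneth formula from the appendix and then promising a separate argument for wild covers. Since a $1$-free curve $s:C\to X^{sm}$ has \emph{projective} (hence proper) source $C$, the ordinary \'etale K\"unneth isomorphism $\pi_1^{et}(C\times M)\cong\pi_1^{et}(C)\times\pi_1^{et}(M)$ of \cite[Expos\'e~X Corollaire~1.7]{SGA} applies with no tameness hypothesis at all. Tameness only becomes an issue in Theorem~\ref{theo:logfundgroup}, where the curve is punctured. This is precisely why the paper emphasizes that it proves finiteness of the full \'etale fundamental group, not just the tame one; your proposed ``positivity of $T_X$'' argument for wild covers is unnecessary, and as stated it is not an argument at all.

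The paper's actual route is quite different and avoids bounding degrees entirely. It uses only that the two-point evaluation $C^{\times 2}\times M\to C^{\times 2}\times(X^{sm})^{\times 2}$ is dominant (Proposition~\ref{prop:freeproperties}.(2), which needs only $r=1$, so no comb bootstrapping is required) and then runs a purely group-theoretic argument (Theorem~\ref{theo:firstetalefundgroup}): first, fixing a point $c\in C$ and the family $S$ of curves sending $c$ to a general $x$, the evaluation $e$ contracts the section $\{c\}\times S$, and the splitting in the K\"unneth isomorphism then forces $e_*\pi_1^{et}(C\times S)=s_*\pi_1^{et}(C)$; dominance of $e$ makes this a finite-index subgroup of $\pi_1^{et}(X^{sm})$. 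Second, a two-point family $R'$ is used so that one section $\{c_1\}\times R'$ is contracted to a point while the other $\{c_2\}\times R'$ dominates $s(C)$; the images of the two $\pi_1(R')$ factors are conjugate to one another, the first is trivial, and the second has finite index in $s_*\pi_1^{et}(C)$, so $s_*\pi_1^{et}(C)$ is finite. The crucial trick, which your proposal misses, is that contracting a section kills the $\pi_1(\MM)$-direction outright, so there is nothing about $\pi_1^{et}(\MM)$ that needs to be bounded or understood.
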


We emphasize that it is the \'etale fundamental group and not the tame \'etale fundamental group that appears in Theorem \ref{theo:etalefundgroup}.

\subsection{Main results: log setting}
There are several famous conjectures that predict that in the quasiprojective setting curves have similar behavior to the projective setting.  According to Iitaka's philosophy, the best way to analyze curves on a smooth quasiprojective variety $U$ is to pass to a SNC compactification $(X,\Delta)$ and study log curves.  In particular, when $(X,\Delta)$ has ``positive curvature'' we should expect to find log curves on $(X,\Delta)$ which are $1$-free in the sense of Definition \ref{defi:rfreelogcurve}.

In our setting, it is more natural to start from an lc pair $(X,\Delta)$ and to define $U$ to be the open subscheme of $X$ obtained by removing all irreducible components of $\Delta$ with coefficient $1$.  Even when $(X,\Delta)$ is an lc Fano pair, it need not admit a $1$-free log curve.  (The simplest counterexample is $\mathbb{P}^{2}$ equipped with two lines.)  As before, we show that the positivity of the log tangent bundle is the key ingredient for correcting this deficiency.

\begin{theorem} \label{theo:logcase}
Let $(X,\Delta)$ be a $\mathbb{Q}$-factorial lc Fano pair over an algebraically closed field of characteristic $0$.  Then the following conditions are equivalent:
\begin{enumerate}
\item $(X,\Delta)$ admits a $1$-free log curve.
\item The log tangent bundle admits no positive rank quotient with numerically trivial first Chern class.
\item For every $m > 0$ we have $h^{0}(V,\Omega_{V}(\log \lfloor \Delta \rfloor)^{\otimes m}) = 0$ where $V \subset X$ is the SNC locus of the pair $(X,\lfloor \Delta \rfloor)$.
\end{enumerate}
If these equivalent conditions hold, then there is a curve $C$ and a finite subset $D \subset C$ such that for every $r \geq 0$ there is an $r$-free log curve $s: (C,D) \to (X, \Delta )$.
\end{theorem}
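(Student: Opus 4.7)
The plan is to parallel the proof of Theorem \ref{theo:lccase}, working throughout with the log tangent sheaf $T_X(-\log \Delta)$ and log curves $s:(C, D) \to (X, \Delta)$ in place of $T_X$ and curves on $X^{sm}$. One operates on the SNC locus $V$ of the pair $(X, \lfloor \Delta \rfloor)$, where the log tangent sheaf is a genuine vector bundle and has a well-behaved slope theory upon pullback along any log curve.

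The implications (1)$\Rightarrow$(2) and (1)$\Rightarrow$(3) are essentially tautological. If $s$ is $1$-free then every positive rank quotient of $s^*T_X(-\log \Delta)$ has slope at least $2g(C)+1$; so a torsion-free quotient of $T_X(-\log \Delta)$ with numerically trivial first Chern class cannot exist (its pullback would give a degree-zero quotient, contradicting freeness of $s$), and a non-zero pluri-log form, which yields a line subsheaf of $\Omega_V(\log \lfloor \Delta \rfloor)^{\otimes m}$, equivalently a quotient of $T_X(-\log \Delta)^{\otimes m}$ of non-positive slope on $s(C)$, is ruled out similarly. I would establish (2)$\Leftrightarrow$(3) by a Harder--Narasimhan / stability argument on the log tangent sheaf combined with a Bogomolov-type extraction: the log anti-canonical positivity of $-(K_X + \Delta)$ bounds HN slopes, while a non-zero pluri-log form corresponds to a destabilizing subsheaf whose saturated quotient has numerically trivial first Chern class.

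The main content is (3)$\Rightarrow$(1). I would adapt the construction used for Theorem \ref{theo:lccase}: pass to a dlt modification (or a log resolution) $\pi: Y \to X$ with boundary $\Delta_Y$, and use the vanishing in (3) together with the negativity of $K_X + \Delta$ and a Mehta--Ramanathan-type restriction theorem to show that the log tangent bundle $T_Y(-\log \Delta_Y)$ has positive minimal $H^{n-1}$-slope for a suitable ample class $H$ on $Y$. This is exactly the input of the log analogue of Theorem \ref{theo:maintheoremcharp}, which then produces a $1$-free log curve on $(Y, \Delta_Y)$. Pushing forward to $(X, \Delta)$ and invoking that such free curves move in a family sweeping out a dense open subset, one obtains a curve whose image avoids both the $\pi$-exceptional locus and the non-SNC points of $(X, \lfloor\Delta\rfloor)$. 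The hardest step will be ensuring that the freeness estimate survives this pushforward and that the image misses the non-log-smooth locus; this requires careful bookkeeping of the discrepancies along the exceptional divisors of $\pi$ and a bend-and-break-type argument to deform away from any offending proper subvariety.

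For the final strengthening that the source $(C, D)$ can be chosen once and for all independently of $r$: starting from a single $1$-free log curve $s_0: (C_0, D_0) \to (X, \Delta)$, I would glue many general deformations of $s_0$ meeting at a common SNC point and then smooth the resulting log comb. By taking $C$ to be a suitable cover of $C_0$ (for instance an étale cover of a fixed genus high enough to receive smoothings of arbitrarily many glued copies) with $D$ the preimage of $D_0$, the smoothing can be arranged with source $(C, D)$ independent of $r$, while the freeness of the smoothed curve grows linearly in the number of glued copies, producing $r$-free log curves on $(C, D)$ for every $r \geq 0$.
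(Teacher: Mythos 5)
Your overall strategy --- route the main implication through positivity of the log tangent sheaf (the PQP), restrict to a positive curve via a Flenner/Mehta--Ramanathan-type theorem, and then increase freeness by a spreading-out-and-Frobenius argument --- is the same as the paper's, which reduces Theorem~\ref{theo:logcase} to Theorem~\ref{theo:maintheoremlog} and ultimately to the log analogue Theorem~\ref{theo:logcharp} of Theorem~\ref{theo:maintheoremcharp}. However, there are two places where your plan diverges in ways that would need to be repaired.

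First, in the key step you propose to pass to a dlt modification or log resolution $\pi : Y \to X$ and prove that $T_Y(-\log\Delta_Y)$ has positive minimal $H^{n-1}$-slope \emph{on $Y$}. This is unnecessary and creates problems: since $X$ is already $\mathbb{Q}$-factorial the dlt modification is an isomorphism, and on a genuine log resolution $Y$ the divisor $-(K_Y+\Delta_Y)$ is no longer nef, so the Ou-type positivity statement (Theorem~\ref{theo:logtangentwqp}) does not apply on $Y$. Moreover, $\phi^*H$ is only big and nef on $Y$, so one cannot directly speak of $\mu^{\min}_{(\phi^*H)^{n-1}}$ without a perturbation. The paper avoids all of this by applying Theorem~\ref{theo:logtangentwqp} on $X$ itself, where the Fano hypothesis holds, and then taking a complete intersection curve in $X$ lying in the SNC locus $V$. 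The intermediate deduction from (3) that there is no numerically trivial quotient is handled elementarily by observing $\Pic^0(Y)=0$ (from $\Omega_Y \subset \Omega_Y(\log\Delta_Y)$) and taking determinants of a putative quotient to produce a nonvanishing log tensor field --- not by a Bogomolov-type extraction. Your remark about ``a bend-and-break-type argument to deform away from any offending proper subvariety'' is also the wrong tool: what is used is the fact that a free (unmixed) log curve can be deformed off any codimension~$\geq 2$ subset (Lemma~\ref{lemm:logfreeprops}).

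Second, and more seriously, your proposal for the final strengthening (a fixed $(C,D)$ that produces $r$-free log curves for all $r\geq 0$) by gluing many copies and smoothing does not work as stated: a smoothing of $k$ glued copies of a genus-$g_0$ curve has genus roughly $kg_0 + (k-1)$, which grows with $k$, so no fixed cover $C$ of $C_0$ can receive such smoothings for arbitrarily large $k$. In the paper this is obtained for free from the Frobenius argument: one fixes a complete intersection curve $i : C \to X$ with $D = i^{-1}\lfloor\Delta\rfloor$, spreads out, and observes that precomposing by any iterate of the (relative) Frobenius --- which is a homeomorphism --- leaves the support of the pullback of $\lfloor\Delta\rfloor$ unchanged, so both $C$ and $D$ are determined once and for all, independently of $r$. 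This observation is the missing idea in your proposal.
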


In dimension $2$ \cite{Zhu16} proves a similar but more precise statement.  We also prove a version of Theorem \ref{theo:logcase} in positive characteristic; see Theorem \ref{theo:logcharp}.

\begin{remark}
We expect the conditions of Theorem \ref{theo:logcase} to be equivalent to $\mathbb{A}^{1}$-connectedness of the open locus.  More generally, we expect these conditions to be important in other contexts which rely on ``geometric positivity'' in the log setting, e.g.~the study of integral points for Fano pairs.
\end{remark}

In the log setting, we would like to understand the fundamental group of the open locus $U$.  Note that $U$ need not be smooth; our next result bounds the fundamental group of the smooth locus of $U$.

\begin{theorem} \label{theo:logfundgroup}
Let $(X,\Delta)$ be a projective lc pair over an algebraically closed field $k$.  Set $U = X \backslash \lfloor \Delta \rfloor$.  Suppose that $(X,\Delta)$ carries a $1$-free log curve.  Then:
\begin{enumerate}
\item The curve-tame \'etale fundamental group of $U^{sm}$ is finite.
\item If $k = \mathbb{C}$, then the topological fundamental group of $U^{sm}$ is finite.
\end{enumerate}
\end{theorem}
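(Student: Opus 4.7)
The plan is to adapt the strategy underlying Theorems \ref{theo:firstfundgroups} and \ref{theo:etalefundgroup} to the log-ramified setting: the existence of a $1$-free log curve, together with the full tower of $r$-free deformations, should force the degree of any connected finite (curve-tame) \'etale cover of $U^{sm}$ to be uniformly bounded.

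For part (1), let $\pi \colon V \to U^{sm}$ be a connected finite curve-tame \'etale cover of degree $d$.  First I would fix a $1$-free log curve $s \colon (C, D) \to (X, \Delta)$ and pull $\pi$ back along $s$; the curve-tame hypothesis guarantees that $s^{*} V \to C \setminus D$ is tame \'etale and extends to a tamely ramified cover $\tilde{C} \to C$ with ramification supported on $D$.  For each connected component $\tilde{C}_0 \to C$ of degree $d_0$, the lifted map $\tilde{s} \colon \tilde{C}_0 \to V$ is itself a $1$-free log curve by a slope calculation: \'etale pullback multiplies slopes of $s^{*} T_X(-\log \lfloor \Delta \rfloor)$ by $d_0$, while Riemann--Hurwitz expresses $g(\tilde{C}_0)$ in terms of $d_0$, $g(C)$, and the tame ramification along $D$, so the two effects cancel up to a positive residual freeness.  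Applied to each $r$-free log curve on $(X, \Delta)$, this produces a corresponding tower of free log curves on $V$.

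The core remaining step is to use this structure to bound $d$.  I would consider for each large $r$ the smooth parameter space $\mathcal{M}_r$ of $r$-free log curves through a fixed point $p \in U^{sm}$ and the evaluation map $\operatorname{ev} \colon \mathcal{M}_r \times (C \setminus D) \to U^{sm}$, which is smooth and dominant.  Pulling $\pi$ back along $\operatorname{ev}$ produces a finite \'etale cover of $\mathcal{M}_r \times (C \setminus D)$ whose monodromy is controlled by the tame fundamental group $\pi_{1}^{t}(C \setminus D)$ of a single fiber; as $r \to \infty$ the space $\mathcal{M}_r$ becomes increasingly flexible (one can pin down more and more image points on a deforming log curve), which forces the monodromy representation of the finitely generated $\pi_{1}^{t}(C \setminus D)$ on a fiber of $\pi$ of size $d$ to be uniformly bounded, and hence $d$ to be uniformly bounded.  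For part (2), since every finite \'etale cover of a smooth complex quasiprojective variety is automatically tame, (1) bounds every finite quotient of $\pi_{1}^{top}(U^{sm})$; the topological analogue of the evaluation argument --- the smooth dominant map $\operatorname{ev}$ yields a surjection of topological fundamental groups, and the image of the finitely generated $\pi_{1}(C \setminus D)$ is of finite index --- then upgrades this to finiteness of $\pi_{1}^{top}(U^{sm})$ itself.

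The main obstacle I anticipate is the bounding step: while the slope calculation cleanly produces $1$-free log curves on $V$, turning this qualitative fact into a uniform numerical bound on $d$ requires careful monodromy control in the log setting, and in particular care at points where $D$ meets the non-SNC locus of $(X, \lfloor \Delta \rfloor)$.  A complementary approach, avoiding direct monodromy analysis, would be a bend-and-break style argument: if $d$ were unbounded, iterating the pullback construction would force the moduli of $1$-free log curves on an intermediate cover to have arbitrarily large dimension while still admitting a finite map to the bounded moduli on $(X, \Delta)$, yielding a contradiction.
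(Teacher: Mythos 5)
Your proposal takes a genuinely different route from the paper, but it contains a real gap at precisely the step you flag as the main obstacle. The paper does not attempt to bound the degree of a connected cover; instead, it reduces Theorem~\ref{theo:logfundgroup} to Theorem~\ref{theo:secondetalefundgroup} via the two-point evaluation map of Lemma~\ref{lemm:logfreeprops}.(2): if $M$ is a component of $\Mor^{log}(C,X)$ containing a $1$-free log curve and $e\colon C\times M\to X$ is evaluation, then $C^{\circ\times 2}\times M\to C^{\circ\times 2}\times (U^{sm})^{\times 2}$ is dominant (with $C^{\circ}=C\setminus D$), and this dominance is fed into a direct group-theoretic argument. That argument hinges on the K\"unneth formula for \emph{tame} fundamental groups proved in Appendix~\ref{sect:appendix} (Theorem~\ref{theo:mainappendix}), used to identify $\pi_1^t(C^{\circ}\times S)\cong\pi_1^t(C^{\circ})\times\pi_1^t(S)$ and to exploit that $e$ contracts a section $\{c\}\times S$; a second evaluation argument, pinning two points and a target point $y$, then shows that $s_*\pi_1^t(C^{\circ},c)$ is conjugate to the trivial group. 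None of this requires lifting the cover to the log curve, Riemann--Hurwitz slope bookkeeping, or any uniform degree bound.

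The gap in your version is exactly the ``core remaining step.'' Knowing that the moduli space $\mathcal{M}_r$ becomes more flexible as $r\to\infty$ does not by itself control the image of the monodromy representation of $\pi_1^t(C\setminus D)$ on a fiber of $\pi$: you would still need a concrete mechanism forcing the image to be trivial (or uniformly bounded), and that mechanism is precisely what the paper's ``contract a section and apply K\"unneth'' step supplies. Your proposed bend-and-break alternative has a similar issue --- the dimension of the moduli of free log curves on an \'etale cover need not grow with $d$ in a way that conflicts with finiteness of the map down to $(X,\Delta)$. The part~(2) sketch is also incomplete: knowing that every finite quotient of $\pi_1^{top}(U^{sm})$ has bounded order does not imply the group is finite, and the observation that the image of $\pi_1(C^{\circ})$ has finite index only gives half of the argument --- the paper's second evaluation step (with the auxiliary point $y$ and the space $R'$) is what shows this image is in fact trivial up to conjugacy. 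In short, the slope/Riemann--Hurwitz lifting step is plausible but ultimately unnecessary, and the degree-bounding strategy is replaced in the paper by a shorter argument whose essential input is the tame K\"unneth theorem, which your proposal does not invoke.
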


The proof of Theorem \ref{theo:logfundgroup} relies on Appendix \ref{sect:appendix} by de Jong which establishes the behavior of tame \'etale fundamental groups under products.

\bigskip

\noindent \textbf{Acknowledgements:}
We are grateful to Johan de Jong for writing the appendix.  We thank Adrian Langer for identifying an error in an earlier version of the paper and for an in-depth discussion about Theorem \ref{theo:normalsemistability}.  We are grateful to J\'anos Koll\'ar for explaining a simplification of the argument for Theorem \ref{theo:firstetalefundgroup}.  We also thank Nathan Chen, Izzet Coskun, Lawrence Ein, Phil Engel, Brendan Hassett, Eric Larson, Akhil Matthew, James M\textsuperscript{c}Kernan, Joaqu\'in Moraga, Nick Salter, Matthew Satriano, Sho Tanimoto, Kevin Tucker, and Isabel Vogt for helpful conversations.

Eric Jovinelly was supported by an NSF postdoctoral research fellowship, DMS-2303335. Brian Lehmann was supported by Simons Foundation grant Award Number 851129.  Eric Riedl was supported by NSF CAREER grant DMS-1945944 and Simons Foundation grants 00011850 and 00013673.

\section{Preliminaries}

Throughout we work with schemes which are separated and whose connected components have finite type over the ground field.  A variety is irreducible and reduced.

Over an algebraically closed field of arbitrary characteristic, we let $\overline{\mathcal{M}}_{g,n}(X)$ denote the Kontsevich moduli stack of stable maps and let $\mathcal{M}_{g,n}(X)$ denote the open substack parametrizing morphisms with smooth irreducible domain.  

Suppose $k$ is a field of positive characteristic $p$ and we have a $k$-scheme $g: X \to \Spec(k)$.  We denote the $p$th power absolute Frobenius map on $X$ by $F_{abs}: X \to X$.  We let $F_{rel}$ denote the relative Frobenius map over $\Spec(k)$, i.e.~if $X^{(1)}$ denotes the base change of $g: X \to \Spec(k)$ over $F_{abs}: \Spec(k) \to \Spec(k)$ then $F_{rel} = (g,F_{abs}): X \to X^{(1)}$ is a morphism over $\Spec(k)$.  We denote the $e$th iterate of $F_{rel}$ by $F^{e}_{rel}: X \to X^{(e)}$.  When the ground field is perfect, the Frobenius automorphism of $\Spec(k)$ is invertible so we can also define $F^{e}_{rel}: X^{(-e)} \to X$.

Note that when $k = \mathbb{F}_{p^{b}}$ is a finite field the homomorphism $F_{abs}^{m}: k \to k$ is the identity map for any $m$ divisible by $b$.  Thus if $m$ is a multiple of $b$ and $X$ is a $k$-scheme then $X^{(-m)}$ will be isomorphic to $X$ as a $k$-scheme.  Similarly, any divisor on $X$ defined over $k$ will be mapped to itself by $F_{abs}^{m}$.

\subsection{Non $\mathbb{Q}$-factorial varieties}

When running the MMP, it is possible to obtain a Fano fibration whose fibers are not $\mathbb{Q}$-factorial.  For this reason, we need to briefly review how to work with curves on non-$\mathbb{Q}$-factorial varieties.  Note that for such varieties the behavior of Cartier and Weil divisors can be quite different and we must be careful to choose the correct notion.

Suppose that $X$ is a normal projective variety of dimension $n$ over an algebraically closed field.  We first define spaces associated to Cartier divisors.  We let $N^{1}(X)_{\mathbb{R}}$ denote the space of $\mathbb{R}$-Cartier divisors up to numerical equivalence and let $N_{1}(X)_{\mathbb{R}}$ denote the dual space of $\mathbb{R}$-$1$-cycles up to numerical equivalence.  We let $\Eff^{1}(X) \subset N^{1}(X)_{\mathbb{R}}$ denote the pseudo-effective cone of divisors and let $\Eff_{1}(X) \subset N_{1}(X)_{\mathbb{R}}$ denote the pseudo-effective cone of curves.  Dual to these are the nef cones $\Nef^{1}(X) \subset N^{1}(X)_{\mathbb{R}}$ and $\Nef_{1}(X) \subset N_{1}(X)_{\mathbb{R}}$.

We next define spaces associated to Weil divisors.  We let $N_{n-1}(X)_{\mathbb{R}}$ denote the space of $\mathbb{R}$-Weil divisors up to intersection against homogeneous polynomials of weight $(n-1)$ in Chern classes of vector bundles on $X$.  We let $N^{n-1}(X)_{\mathbb{R}}$ denote the dual space spanned by polynomials in Chern classes.   We let $\Eff_{n-1}(X) \subset N_{n-1}(X)_{\mathbb{R}}$ denote the closure of the cone generated by effective Weil divisors.  Dually we have the nef cone $\Nef^{n-1}(X) \subset N^{n-1}(X)_{\mathbb{R}}$.

\begin{remark}
For any normal projective variety $X$ there is an injective map $N^{1}(X)_{\mathbb{R}} \xrightarrow{\cap[X]} N_{n-1}(X)_{\mathbb{R}}$ from numerical Cartier classes to numerical Weil classes.  Dually, we get a surjection from $\Nef^{n-1}(X)$ to $\Nef_{1}(X)$.  In particular, when we want to find curves representing a nef class the strongest statements are obtained by working with $\Nef^{n-1}(X)$.
\end{remark}

\begin{remark}
When $X$ is $\mathbb{Q}$-factorial, the map $N^{1}(X)_{\mathbb{R}} \xrightarrow{\cap[X]} N_{n-1}(X)_{\mathbb{R}}$ is an isomorphism.  Thus we can naturally identify $\Nef^{n-1}(X)$ and $\Nef_{1}(X)$.  We will preferentially use the latter notation when $X$ is $\mathbb{Q}$-factorial.
\end{remark}

\begin{remark} \label{rema:defineint}
Suppose that $C$ is a smooth curve and $s: C \to X$ is a map whose image lies in the smooth locus of $X$.  Note that $s$ is an lci morphism of codimension $n-1$; indeed, we can write $s$ as the regular embedding $\Gamma_{s}: C \to C \times X^{sm}$ followed by the smooth projection $\pi_{2}: C \times X^{sm} \to X$.  Then \cite[Example 19.2.3]{Fulton84} shows that the Gysin map $s^{*}: A_{n-1}(X) \to A_{0}(C)$ descends to numerical groups.  In this way, any curve $C$ in the smooth locus of $X$ naturally defines a class $s_{*}C \in N^{n-1}(X)_{\mathbb{R}}$.  If furthermore $s$ deforms in a dominant family then $s_{*}C$ defines a class in $\Nef^{n-1}(X)$.
\end{remark}

The advantage of $\Nef^{n-1}(X)$ is that it is compatible with birational pullbacks, as explained by the following lemma.

\begin{lemma} \label{lemm:nonqfactorialcone}
Let $\phi: X' \to X$ denote a birational morphism between normal projective varieties over an algebraically closed field.  Then:
\begin{enumerate}
\item The pullback map $\phi^{*}: \Nef^{n-1}(X) \to \Nef^{n-1}(X')$ is injective.
\item If $\phi$ is a small contraction, then $\phi^{*}$ takes interior classes for $\Nef^{n-1}(X)$ to interior classes for $\Nef^{n-1}(X')$. 
\item The pushforward map $\phi_{*}: N_{1}(X')_{\mathbb{R}} \to N_{1}(X)_{\mathbb{R}}$ takes $\Nef_{1}(X')$ surjectively onto $\Nef_{1}(X)$.
\end{enumerate}
\end{lemma}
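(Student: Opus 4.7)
The overall plan is to build on two ingredients: the projection formula $\phi^*\alpha \cdot D' = \alpha \cdot \phi_* D'$ for $\alpha \in N^{n-1}(X)_\mathbb{R}$ and $D' \in N_{n-1}(X')_\mathbb{R}$, and the fact that a birational morphism sets up a bijection on prime Weil divisors via strict transforms. Parts (1) and (3) fall out cleanly from these, while (2) requires an additional boundedness argument.

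For Part (1), suppose $\phi^*\alpha = 0$. For any prime Weil divisor $D$ on $X$ with strict transform $D'$ on $X'$, the projection formula gives $\alpha \cdot D = \alpha \cdot \phi_* D' = \phi^*\alpha \cdot D' = 0$; since prime Weil divisors span $N_{n-1}(X)_\mathbb{R}$ under the duality pairing, $\alpha = 0$. For Part (3), the containment $\phi_*(\Nef_1(X')) \subseteq \Nef_1(X)$ is immediate from the projection formula together with preservation of pseudoeffectivity under pullback of Cartier classes. For surjectivity I would argue by Hahn--Banach: dual to the injectivity of $\phi^*$ on $N^1$, the map $\phi_* \colon N_1(X')_\mathbb{R} \to N_1(X)_\mathbb{R}$ is surjective, so for any $\gamma \in \Nef_1(X)$ the affine preimage $\phi_*^{-1}(\gamma)$ is nonempty. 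If it were disjoint from $\Nef_1(X')$, a separating hyperplane, being constant along $\ker \phi_* = (\phi^*N^1(X))^{\perp}$, would be cut out by a functional $\phi^*D$ with $D \in N^1(X)_\mathbb{R}$; nonnegativity on $\Nef_1(X')$ forces $\phi^*D \in \Eff^1(X')$, hence $D \in \Eff^1(X)$, while separation from $\phi_*^{-1}(\gamma)$ gives $D \cdot \gamma < 0$, contradicting $\gamma \in \Nef_1(X)$.

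For Part (2), the proof reduces to showing that when $\phi$ is small, no nonzero class in $\Eff_{n-1}(X')$ is killed by $\phi_*$: granting this, if $\alpha$ is interior and $D' \in \Eff_{n-1}(X')$ is nonzero, then $\phi^*\alpha \cdot D' = \alpha \cdot \phi_* D' > 0$ by interiority. On effective Weil divisors the claim is immediate from the strict-transform bijection, since $\phi_*$ sends a nonzero effective Weil divisor on $X'$ to a nonzero effective Weil divisor on $X$, which pairs positively with $H^{n-1}$ for any ample $H$. The main obstacle is extending this to pseudoeffective limit classes. For this I would use that $\phi^*H$ is big and nef, and that its restriction to any prime Weil divisor of $X'$ is big (since $\phi$ is small, no divisor lies in the exceptional locus), forcing $(\phi^*H)^{n-1}$ to pair strictly positively with every nonzero effective Weil divisor; Chow boundedness of effective classes of fixed $(H')^{n-1}$-degree for $H'$ ample on $X'$ then propagates this strict positivity to pseudoeffective limits.
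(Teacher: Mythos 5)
Your treatment of parts (1) and (3) is essentially the paper's: (1) follows from surjectivity of $\phi_{*}$ on effective Weil classes (your formulation via strict transforms of prime divisors is the same content), and (3) is the formal duality of the fact that $\Eff^{1}(X)$ injects into $\Eff^{1}(X')$ under $\phi^{*}$. One quibble: in (3), separating a closed cone from a disjoint closed affine subspace need not be \emph{strict} in general, so the final inequality $D\cdot\gamma<0$ requires justification — one should note that $\Nef_{1}(X')$ is pointed and meets $\ker\phi_{*}$ only at the origin (the latter because a nef curve class pairing to zero with the big class $\phi^{*}H$ must vanish), from which the needed closedness/strict separation follows. The paper elides this too, so I would not count it as a real gap.

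The genuine problem is in part (2). Your reduction — that it suffices to show $\ker\phi_{*}\cap\Eff_{n-1}(X')=\{0\}$ when $\phi$ is small — is correct and is exactly what the paper does; but the paper then cites \cite[Theorem 3.21]{FL16}, while your direct argument does not close the gap. You correctly observe that $(\phi^{*}H)^{n-1}\cdot E>0$ for every prime Weil divisor $E$ on $X'$, and then assert that ``Chow boundedness of effective classes of fixed $(H')^{n-1}$-degree \dots\ propagates this strict positivity to pseudoeffective limits.'' This inference does not hold: strict positivity of a continuous functional on a dense subset of a compact slice of $\Eff_{n-1}(X')$ does not yield strict positivity on the boundary of that slice (the infimum can be $0$), and finiteness of numerical classes at each fixed degree does not help because pseudoeffective limit classes are approximated by effective cycles whose components have unbounded degrees. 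What is actually needed is a \emph{uniform} lower bound of the form $(\phi^{*}H)^{n-1}\cdot E\ \geq\ \varepsilon\,(H')^{n-1}\cdot E$ over \emph{all} prime Weil divisors $E$, which does not follow from Chow boundedness alone. (Such a bound can in fact be extracted from bigness of $\phi^{*}H$: write $\phi^{*}H\equiv\delta H'+A+N$ with $A$ ample and $N$ effective and argue separately for $E\not\subset\Supp N$ and for the finitely many $E\subset\Supp N$ — but this is a different argument from the one you propose.) As written, part (2) has a gap where the paper invokes \cite{FL16}.
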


\begin{proof}
(1) follows from the surjectivity of $\Eff_{n-1}(X') \xrightarrow{\phi_{*}} \Eff_{n-1}(X)$.  Furthermore, \cite[Theorem 3.21]{FL16} shows that when $\phi$ is a small contraction the kernel of $N_{n-1}(X')_{\mathbb{R}}  \xrightarrow{\phi_{*}} N_{n-1}(X)_{\mathbb{R}}$ only intersects $\Eff_{n-1}(X')$ at $\{ 0 \}$.  Dualizing, we obtain (2).  (3) is dual to the fact that the pseudo-effective cone of Cartier divisors injects upon birational pullback.
\end{proof}

\subsection{Stability of torsion-free sheaves}

We quickly review Harder-Narasimhan filtrations with respect to nef curve classes as developed by \cite{CP11,GKP14,GKP16}.  While these references work in characteristic $0$, the arguments used for the construction below work equally well in characteristic $p$. 

\begin{notation}
Let $X$ be a $\mathbb{Q}$-factorial normal projective variety and let $\alpha \in \Nef_{1}(X)$.  For any non-zero torsion-free sheaf $\mathcal{E}$ on $X$, let
\begin{equation*}
0 = \mathcal{F}_{0} \subset \mathcal{F}_{1} \subset \mathcal{F}_{2} \subset \ldots \subset \mathcal{F}_{s} = \mathcal{E}.
\end{equation*}
denote the $\alpha$-Harder-Narasimhan filtration of $\mathcal{E}$ with respect to $\alpha$.  Thus each $\mathcal{F}_{i}/\mathcal{F}_{i-1}$ is $\alpha$-semistable and the $\alpha$-slopes of the quotients $\mathcal{F}_{i}/\mathcal{F}_{i-1}$ are strictly decreasing in $i$.  When $X$ is a curve, we will always implicitly use the polarization $\alpha$ given by the fundamental class.

We denote by $\mu^{max}_{\alpha}(\mathcal{E})$ the maximal $\alpha$-slope of any torsion-free subsheaf of $\mathcal{E}$, that is, $\mu^{max}_{\alpha}(\mathcal{E}) = \mu_{\alpha}(\mathcal{F}_{1})$.  We denote by $\mu^{min}_{\alpha}(\mathcal{E})$ the minimal $\alpha$-slope of any torsion-free quotient of $\mathcal{E}$, that is, $\mu^{min}_{\alpha}(\mathcal{E}) = \mu_{\alpha}(\mathcal{E}/\mathcal{F}_{s-1})$.
\end{notation}

One can develop a theory of slope stability for arbitrary normal projective varieties by working with $\Nef^{n-1}(X)$ instead of $\Nef_{1}(X)$.  While we will not develop such a theory here, we will need one special case: when $H$ is an ample divisor on a normal projective variety, one can define a theory of slope stability with respect to $H^{n-1}$.  Indeed, this is the classical notion of Mumford-Takemoto slope stability; see \cite{Maruyama81}.

We will also need Langer's results on the behavior of semistability under restriction in characteristic $p$.  More precisely, we will need a version of \cite[Theorem 4.1]{Langer10} for normal projective varieties.

\begin{theorem}[\cite{Langer10}] \label{theo:normalsemistability}
Let $X$ be a normal projective variety of dimension $n \geq 2$ and let $H$ be a very ample Cartier divisor on $X$.  For a tuple $\vec{m} = (m_{1},\ldots,m_{n-1})$ of positive integers, let $\mathcal{C}_{\vec{m}}$ denote the generic fiber of the family of complete intersection curves constructed from $|m_{1}H|,\ldots,|m_{n-1}H|$.

Suppose that $\mu^{min}_{H^{n-1}}(\Omega_{X}^{\vee \vee}) \leq 0$.  Then for any finite set of $H^{n-1}$-semistable torsion-free sheaves $\{ \mathcal{E}_{j}\}$ there is a choice of $\vec{m}$ such that for every $j$ the restriction of $\mathcal{E}_{j}$ to $\mathcal{C}_{\vec{m}}$ is strongly semistable.
\end{theorem}

\begin{proof}
For simplicity we explain the proof for a single semistable sheaf $\mathcal{E}$; the argument is exactly the same for a finite set of sheaves.

The assumption on the minimal slope quotient of $\Omega_{X}^{\vee \vee}$ implies that each semistable sheaf on $X$ is in fact strongly semistable by applying Cartier descent to the smooth locus of $X$ as in the argument of \cite[Theorem 2.1]{MR83}.  Moreover, the argument of \cite[Corollary A.3.1]{Langer04} works for normal projective varieties to show that the torsion-free part of a tensor product of strongly semistable sheaves on $X$ is again strongly semistable.

By repeatedly applying \cite[Theorem 0.1]{Langer24}, we obtain an infinite set of tuples of positive integers $(m_{1},\ldots,m_{n-1})$ satisfying the following properties:
\begin{enumerate}
\item The restriction of $\mathcal{E}$  to $\mathcal{C}_{\vec{m}}$ is semistable.
\item The restriction of each graded piece of the $H^{n-1}$-Harder-Narasimhan filtration of $\Omega_{X}^{\vee \vee}$ is semistable.
\item There are positive constants $C_{1},C_{2}$ such that $C_{1} < \frac{m_{i}}{m_{j}} < C_{2}$ for any pair of integers $m_{i},m_{j}$ taken from a tuple in our set.
\end{enumerate}
The last property comes from the formulation of the explicit bounds on the $m_{i}$ in \cite[Theorem 0.1]{Langer24}.  Note that condition (3) implies that as we vary $\vec{m}$ in our set the positive integers $\min (\vec{m})$ are unbounded.

The remainder of the proof follows the steps of \cite[Theorem 4.1]{Langer10}: suppose that $\mathcal{E}|_{\mathcal{C}_{\vec{m}}}$ fails to be strongly semistable.  Let $Z_{\vec{m}} \subset \prod_{i=1}^{n-1} \mathbb{P}(|m_{i}H|) \times X$ denote the total incidence correspondence that generically parametrizes complete intersection curves.  By \cite[Theorem 3.1]{Langer24} there is some iterated absolute Frobenius $F^{e}_{abs}$ on $\mathcal{C}_{\vec{m}}$ such that every graded piece of the Harder-Narasimhan filtration of $F^{e*}_{abs}\mathcal{E}|_{\mathcal{C}_{\vec{m}}}$ is strongly semistable; we choose the minimal such $e$.  Letting $\pi: Z_{\vec{m}} \to X$ denote the projection, by spreading this filtration of $F^{e*}_{abs}\mathcal{E}|_{\mathcal{C}_{\vec{m}}}$ out to $Z_{\vec{m}}$ we get a filtration
\begin{equation*}
0 = \mathcal{F}_{0} \subset \mathcal{F}_{1} \subset \ldots \subset \mathcal{F}_{s} = F^{e*}_{abs}\pi^{*}\mathcal{E}
\end{equation*}
whose restriction to $\mathcal{C}_{\vec{m}}$ is the Harder-Narasimhan filtration.  Note that each torsion-free sheaf on $X$ restricts to a locally free sheaf on $\mathcal{C}_{\vec{m}}$.  As in \cite[Theorem 4.1]{Langer10}, one shows that there exists an index $i$ such that
\begin{align*}
\frac{m_{1}\ldots m_{n-1}}{\max \{ \tfrac{r^{2}-1}{4}, 1\}} & \leq \mu^{min}((\mathcal{F}_{i} \otimes (F^{e*}_{abs}\pi^{*}\mathcal{E}/\mathcal{F}_{i})^{\vee})|_{\mathcal{C}_{\vec{m}}} ) \\
& \leq \mu^{max}((\Omega_{Z_{\vec{m}}/X}|_{\mathcal{C}_{\vec{m}}})^{\vee \vee}) \leq \max_{j} \frac{m_{j} m_{1} \ldots m_{n-1} H^{n}}{ {m_{j}+n \choose n} - (n-1)m_{j} - 1 }
\end{align*}
where $r$ denotes the rank of $\mathcal{E}$.
(The numerical bounds differ from those of \cite[Theorem 4.1]{Langer10} since we are working with curves instead of divisors; the details of the computation are given in \cite[Proof of Theorem 7.1.1]{HL10}.)  According to condition (3) on our set of tuples, there are only finitely many tuples $\vec{m}$ in our set for which this inequality holds.  Thus there exists a tuple $\vec{m}$ such that $\mathcal{E}|_{\mathcal{C}_{\vec{m}}}$ is strongly semistable.
\end{proof}

\subsection{Positive curves}

The notion of an $r$-free curve was given in Definition \ref{defi:freecurve}.  Note that for a rational curve, free is the same as $0$-free and very free is the same as $1$-free; we will use the same shorthand for higher genus curves.  (Note also that freeness in our sense implies the related notion of \cite[II.3.1 Definition]{Kollar}.)  We will need the following properties of free curves:

\begin{proposition} \label{prop:freeproperties}
Let $X$ be a projective variety.  Suppose that $s: C \to X$ is an $r$-free curve for some $r \geq 0$.  Then:
\begin{enumerate}
\item For any codimension $\geq 2$ closed subset $Z \subset X$ a general deformation of $s$ will be an $r$-free curve whose image is disjoint from $Z$. (\cite[Lemma 3.8]{LRT24})
\item If $M \subset \Mor(C,X)$ is the irreducible component containing $s$ then the $(\lfloor r \rfloor + 1)$-fold evaluation map $C^{\times (\lfloor r \rfloor + 1)} \times M \to C^{\times (\lfloor r \rfloor + 1)} \times X^{\times (\lfloor r \rfloor + 1)}$ is dominant.   (\cite[Lemma 3.6.(4)]{LRT24})
\end{enumerate}
\end{proposition}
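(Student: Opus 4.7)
The plan is to derive both parts from standard deformation theory applied to the positivity assumption in Definition \ref{defi:freecurve}, which both controls the dimension of $M$ and guarantees $H^1$-vanishing after appropriate twists.

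For (1), I would first observe that $r$-freeness is an open condition on $\Mor(C,X)$, so a general deformation of $s$ remains $r$-free. Next, since $s^{*}T_{X}$ has $\mu^{min} \geq 2g(C)+r > 2g(C)-2$, we have $H^{1}(C, s^{*}T_{X}) = 0$, so $M$ is smooth at $[s]$ of the expected dimension $\chi(s^{*}T_{X})$, and the same vanishing (after removing one point) shows that the evaluation map $\ev: C \times M \to X$ is smooth at $(p, [s])$ for every $p \in C$ mapping into $X^{sm}$. In particular $\ev$ is dominant with smooth locus surjecting onto a dense open set of $X$. Then $\ev^{-1}(Z)$ has codimension $\geq 2$ in $C \times M$, so its projection to $M$ has codimension $\geq 1$. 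A general $[s'] \in M$ therefore lies outside this projection, and the corresponding deformation $s'$ has image disjoint from $Z$.

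For (2), set $k = \lfloor r \rfloor$ and consider the $(k+1)$-fold evaluation. At a point $((p_{0},\ldots,p_{k}),[s])$ with distinct $p_i$, the tangent map decomposes as the identity on $\bigoplus_{i} T_{p_{i}}C$ plus the section-evaluation map
\begin{equation*}
H^{0}(C, s^{*}T_{X}) \longrightarrow \bigoplus_{i=0}^{k} s^{*}T_{X}|_{p_{i}}
\end{equation*}
whose cokernel is $H^{1}(C, s^{*}T_{X}(-\sum_{i} p_{i}))$. By $r$-freeness, every positive rank quotient of $s^{*}T_{X}(-\sum p_{i})$ has slope at least $2g(C)+r-(k+1) \geq 2g(C)-1 > 2g(C)-2$, so this $H^{1}$ vanishes. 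Hence the tangent map is surjective, so the $(k+1)$-fold evaluation is smooth at this point and therefore dominant onto $C^{\times(k+1)} \times X^{\times(k+1)}$.

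The main thing to verify carefully is the $H^{1}$-vanishing in the presence of genus: the point is that the bound $2g(C)+r$ built into Definition \ref{defi:freecurve} was chosen precisely so that after twisting down by a divisor of degree $\lfloor r \rfloor + 1$, the slope of every quotient stays strictly above $2g(C)-2$, which is the threshold for $H^{1}$-vanishing via the Harder-Narasimhan filtration. Once this bookkeeping is in place, both statements reduce to standard smoothness-of-evaluation arguments, so no other essential obstacle arises.
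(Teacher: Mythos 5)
Your proof is correct and gives the standard deformation-theoretic argument. The paper's own ``proof'' of Proposition~\ref{prop:freeproperties} is just a citation to \cite{LRT24} together with the remark that the arguments there carry over to characteristic $p$; your writeup supplies the details that the paper defers to that reference, and it is indeed characteristic-free (the $H^{1}$-vanishing is via Serre duality and the slope bound, and the smoothness of $\Mor(C,X)$ at $[s]$ follows from $H^{1}(C,s^{*}T_{X})=0$ in any characteristic).

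One minor point of care for (1): the evaluation map $\ev: C \times M \to X$ is only known to be smooth over the open sublocus $M^{\circ}\subset M$ of $r$-free curves, so the codimension-$\geq 2$ bound for $\ev^{-1}(Z)$ is only available over $C\times M^{\circ}$. The argument still closes because $M\setminus M^{\circ}$ is a proper closed subset of the irreducible $M$, hence its contribution to the projection of $\ev^{-1}(Z)$ is already lower-dimensional. You state the conclusion directly for $\ev^{-1}(Z)\subset C\times M$, which glosses over this; it is worth being explicit, but it does not affect the validity of the proof.
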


\begin{proof}
The results of \cite{LRT24} are only explicitly stated in characteristic $0$ but the arguments also work in characteristic $p$. 
\end{proof}

The following lemma is frequently useful when working with free curves.

\begin{lemma} \label{lemm:avoidcodim2}
Let $X$ be a $\mathbb{Q}$-factorial normal projective variety.  Suppose that $M \subset \Mor(C,X)$ parametrizes a family of maps $s: C \to X$ with the property that for any codimension $2$ subset $Z \subset X$ there is a deformation of $s$ such that $s(C) \cap Z = \emptyset$.

Let $\mathcal{E}$ be a torsion-free sheaf on $X$.  If for a general map $s: C \to X$ parametrized by $M$ we have $\mu^{min}(s^{*}\mathcal{E}) > 0$, then $\mu^{min}_{s_{*}C}(\mathcal{E}) > 0$.
\end{lemma}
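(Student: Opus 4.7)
The plan is to translate the hypothesis on $\mu^{min}(s^*\mathcal{E})$ into a statement about the $\alpha$-Harder-Narasimhan filtration of $\mathcal{E}$, where $\alpha = s_*C \in \Nef_1(X)$ (using $\mathbb{Q}$-factoriality to identify $\Nef^{n-1}(X)$ with $\Nef_1(X)$). Let $0 = \mathcal{F}_0 \subset \cdots \subset \mathcal{F}_t = \mathcal{E}$ be the $\alpha$-HN filtration and set $\mathcal{Q} = \mathcal{E}/\mathcal{F}_{t-1}$, so that $\mu^{min}_\alpha(\mathcal{E}) = \mu_\alpha(\mathcal{Q})$; the goal is to show $\mu_\alpha(\mathcal{Q}) > 0$.

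Next I would pick a single $s \in M$ satisfying two conditions: (i) $s(C) \subset X^{sm}$ and avoids the union $Z$ of the non-locally-free loci of $\mathcal{E}$, $\mathcal{F}_{t-1}$, and $\mathcal{Q}$ (taken on $X^{sm}$), and (ii) $\mu^{min}(s^*\mathcal{E}) > 0$. All three sheaves are torsion-free on $X^{sm}$, so each of their non-locally-free loci has codimension at least $2$ in $X^{sm}$; together with the singular locus of $X$ (codimension $\geq 2$ by normality) this yields $\codim_X Z \geq 2$. Property (i) then holds for a general $s \in M$ by the avoidance hypothesis applied to $Z$, and (ii) holds for a general $s \in M$ by assumption, so both can be arranged simultaneously on a dense open subset of $M$.

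For this $s$, the sheaves $s^*\mathcal{E}$, $s^*\mathcal{F}_{t-1}$, $s^*\mathcal{Q}$ are all locally free on $C$, and the pulled-back sequence $0 \to s^*\mathcal{F}_{t-1} \to s^*\mathcal{E} \to s^*\mathcal{Q} \to 0$ is exact. Therefore $s^*\mathcal{Q}$ is a nonzero locally free quotient of $s^*\mathcal{E}$, giving $\mu(s^*\mathcal{Q}) \geq \mu^{min}(s^*\mathcal{E}) > 0$. On the other hand, the projection formula yields $\deg(s^*\mathcal{Q}) = c_1(\mathcal{Q}) \cdot s_*C = c_1(\mathcal{Q}) \cdot \alpha$, while $\rk(s^*\mathcal{Q}) = \rk(\mathcal{Q})$, so $\mu(s^*\mathcal{Q}) = \mu_\alpha(\mathcal{Q})$. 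This proves $\mu_\alpha(\mathcal{Q}) > 0$ as needed. The main technical step is arranging (i) and (ii) simultaneously, which rests on the codimension bound for non-locally-free loci of torsion-free sheaves on the smooth locus together with the avoidance property of $M$.
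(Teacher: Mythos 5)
Your proof is correct and follows essentially the same route as the paper: choose a deformation of $s$ that avoids the singular locus of $X$ and the non-locally-free loci of the relevant sheaves, pull back, and compare slopes. The only difference is stylistic — the paper argues directly with an arbitrary torsion-free quotient $\mathcal{Q}$ of $\mathcal{E}$ (showing each has positive $s_*C$-slope, hence so does the minimal one), whereas you first extract the bottom piece of the $s_*C$-Harder–Narasimhan filtration and argue for that specific quotient; both reduce to the same degree computation on $C$.
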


\begin{proof}
Consider a non-trivial torsion-free quotient $\mathcal{E} \to \mathcal{Q}$.  By assumption, a general deformation $s': C \to X$ of $s$ will have the property that $s'(C)$ lies in the locus where $\mathcal{E}$ and $\mathcal{Q}$ are locally free.  Thus the surjection $s'^{*}\mathcal{E} \to s'^{*}\mathcal{Q}$ is also a morphism of locally free sheaves.  By assumption we have $0 < \deg(s'^{*}\mathcal{Q}) = c_{1}(\mathcal{Q}) \cdot s'_{*}C$ so that $\mu_{s_{*}C}(\mathcal{Q}) > 0$.  This implies the desired statement.
\end{proof}

The following weaker positivity condition is also useful.

\begin{definition} \label{defi:almostfree}
Let $X$ be a projective variety over an algebraically closed field and let $r$ be a non-negative number.  We say that a curve $s: C \to X$ is almost $r$-free if there is a smooth open subset $U \subset X$, a smooth morphism $\pi: U \to Z$, and a fiber $F$ of $\pi$ such that $s$ has image in $F$ and is an $r$-free curve in $F$.
\end{definition}

Note that for an almost $r$-free curve there is a positive integer $q$ such that
\begin{equation*}
s^{*}T_{X} \cong \mathcal{O}_{C}^{\oplus q} \oplus \mathcal{E}
\end{equation*}
where $\mathcal{E}$ is a vector bundle satisfying $\mu^{min}(\mathcal{E}) \geq 2g(C) + r$.  By \cite[I.2.17 Theorem]{Kollar} a general fiber $F'$ of $\pi$ will contain a deformation of $s$ which is again an $r$-free curve in $F'$.  Furthermore, if $M$ is an irreducible component of $\Mor(C,X)$ that generically parametrizes almost $r$-free curves then $\dim M = h^{0}(C,s^{*}T_{X})$.

\subsection{Pairs and the MMP} \label{sect:MMP}

In this subsection we work over an algebraically closed field of characteristic $0$.  A pair $(X,\Delta)$ consists of a normal variety $X$ and an effective $\mathbb{Q}$-Weil divisor $\Delta$ such that $K_{X} + \Delta$ is $\mathbb{Q}$-Cartier.  We will use the standard definitions of Kawamata log terminal (klt), divisorially log terminal (dlt) and log canonical (lc) as in \cite{KM98}.  We will frequently use the following results:
\begin{enumerate}
\item If $(X,\Delta)$ is dlt, then for any open neighborhood $U$ of $0 \in N^{1}(X)_{\mathbb{R}}$ there is a divisor $\Delta'$ such that $(X,\Delta')$ is klt and $\Delta' - \Delta \in U$.  (See \cite[Proposition 2.43]{KM98}.)
\item Every dlt pair $(X,\Delta)$ admits a $\mathbb{Q}$-factorialization, i.e.~a small projective birational morphism $\phi: Y \to X$ such that $Y$ is $\mathbb{Q}$-factorial.  Furthermore, the strict transform $\Delta_{Y}$ of $\Delta$ satisfies that $(Y,\Delta_{Y})$ is dlt.  (See \cite[Corollary 1.4.3]{BCHM10}, \cite[Corollary 4.4]{Lohmann13}.) 
\item If $(X,\Delta)$ is a klt Fano pair and $\phi: Y \to X$ is a $\mathbb{Q}$-factorialization, then there is a divisor $\Delta'$ on $Y$ such that $(Y,\Delta')$ is a klt Fano pair.  (See \cite[Lemma 3.1]{GOST15}.)
\end{enumerate}
Combining these results, we see that for every dlt log Fano pair $(X,\Delta)$ there is a small projective birational map $\phi: Y \to X$ and a divisor $\Delta'$ on $Y$ such that $(Y,\Delta')$ is a $\mathbb{Q}$-factorial klt log Fano pair.

\subsection{Positivity of the tangent bundle}

We start by recalling an important theorem of \cite{Ou23}.

\begin{theorem}[{\cite[Theorem 1.4]{Ou23}}] \label{theo:ou}
Let $(X,\Delta)$ be a projective $\mathbb{Q}$-factorial lc pair over an algebraically closed field of characteristic $0$.  Suppose that $-(K_{X} + \Delta)$ is nef.  Then for every surjection $T_{X} \to \mathcal{Q}$ to a torsion-free sheaf $\mathcal{Q}$ we have $c_{1}(\mathcal{Q})$ is pseudo-effective.
\end{theorem}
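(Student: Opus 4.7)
The plan is to argue by contradiction using the duality between the pseudo-effective cone of divisors and the movable cone of curves. Suppose there is a surjection $T_X \twoheadrightarrow \mathcal{Q}$ onto a torsion-free sheaf with $c_1(\mathcal{Q})$ not pseudo-effective. Then there exists a movable class $\alpha \in \Nef_1(X)$ such that $c_1(\mathcal{Q}) \cdot \alpha < 0$. Replacing $\mathcal{Q}$ by the minimum-slope term of the $\alpha$-Harder-Narasimhan filtration of $T_X$, we may assume $\mathcal{Q}$ is $\alpha$-semistable with $\mu_\alpha(\mathcal{Q}) < 0$. The goal is then to rule this out using the hypothesis that $-(K_X+\Delta)$ is nef.

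First I would reduce to a $\mathbb{Q}$-factorial dlt setting via a dlt modification $\phi: Y \to X$. Using Lemma \ref{lemm:nonqfactorialcone}, I can lift $\alpha$ to a movable class $\widetilde{\alpha}$ on $Y$, and since $\phi$ is an isomorphism in codimension one over a big open set, the quotient $T_X \twoheadrightarrow \mathcal{Q}$ pulls back (after saturation) to a quotient $T_Y \twoheadrightarrow \mathcal{Q}'$ on $Y$ with $\mu_{\widetilde{\alpha}}(\mathcal{Q}') < 0$. The nefness of $-(K_X+\Delta)$ is preserved up to correction by exceptional divisors that pair nonnegatively with $\widetilde{\alpha}$. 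After this reduction I may assume $(X,\Delta)$ is $\mathbb{Q}$-factorial dlt.

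The heart of the proof is the generic semipositivity of $T_X$ for lc Fano pairs: for every movable class $\alpha$ on a $\mathbb{Q}$-factorial lc Fano pair, $\mu_\alpha^{\min}(T_X) \geq 0$. In the klt case this is a Campana--P\u{a}un-type result, proved via the Bogomolov--Sommese inequality and a contradiction argument: if $\mathcal{F} \subset T_X$ were a saturated subsheaf with $\mu_\alpha(T_X/\mathcal{F}) < 0$, then by the identity $c_1(\mathcal{F}) + c_1(T_X/\mathcal{F}) = -K_X$ combined with the nefness of $-(K_X+\Delta)$, the sheaf $\det \mathcal{F}^\vee \hookrightarrow \bigwedge^r \Omega_X^{[1]}$ would have Kodaira--Iitaka dimension exceeding its rank, violating Bogomolov--Sommese. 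To handle dlt but non-klt pairs, I would perturb $\Delta$ to approximate by klt pairs and pass to a limit of slopes, using continuity of the $\mu_\alpha^{\min}$ functional on sheaves with uniformly bounded determinant. Applying this bound to our $\mathcal{Q}$ contradicts $\mu_\alpha(\mathcal{Q}) < 0$ and finishes the proof.

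The main obstacle is the passage from klt to lc. In the klt case one can perturb $\Delta$ freely and rely on well-established generic semipositivity; in the lc case the coefficient-one components of $\Delta$ are rigid and carry essential positivity that must be balanced against the drop from $K_X$ to $K_X+\Delta$. Addressing this requires a Bogomolov--Sommese-type vanishing for lc or dlt pairs (as developed in the work of Graf and others building on Greb--Kebekus--Kov\'acs--Peternell), together with an inductive analysis along the log canonical centers via the adjunction formula. Ou's proof in \cite{Ou23} provides exactly such a package and is what we would invoke to cleanly complete the argument.
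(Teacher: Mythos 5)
Your overall instinct—that this is a minor extension of Ou's Theorem 1.4, proved by contradiction via a negative-slope quotient and the movable/pseudoeffective duality—is aligned with the paper. But the intermediate steps you propose do not hold up, and in the end you acknowledge you would ``invoke'' Ou's package anyway without identifying what actually needs to change.

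The Bogomolov--Sommese sketch is the most serious gap. From $\mu_\alpha(T_X/\mathcal{F}) < 0$ and $c_1(\mathcal{F}) + c_1(T_X/\mathcal{F}) = -K_X$ with $-K_X \cdot \alpha \geq 0$, you get $c_1(\mathcal{F}) \cdot \alpha > 0$, hence $c_1(\det\mathcal{F}^\vee) \cdot \alpha < 0$. Negativity against a single movable class does not imply large Kodaira--Iitaka dimension for $\det\mathcal{F}^\vee$ (indeed it points the other way), so you have no contradiction with Bogomolov--Sommese. Ou's actual argument for Theorem 1.4 is different: the destabilizing subsheaf $\mathcal{F} \subset T_X$ of maximal positive slope is a foliation, Campana--P\u{a}un-type criteria show it is algebraically integrable with rationally connected leaves, and then Ou's Theorem 1.10 (a canonical-bundle comparison $K_{\mathcal{F}} \cdot \alpha \geq (K_X + \Delta_{\mathrm{ver}}) \cdot \alpha$ valid for lc pairs) produces the contradiction. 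No Bogomolov--Sommese step appears.

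The perturbation from lc/dlt to klt also fails: replacing $\Delta$ by $\Delta' \leq \Delta$ gives $-(K_X + \Delta') = -(K_X + \Delta) + (\Delta - \Delta')$, which is nef plus effective but not nef, so the hypothesis of a klt generic-semipositivity theorem is not available. (This perturbation is used elsewhere in the paper, in Proposition~\ref{prop:kltpqp}, but there one has \emph{ampleness} to absorb the perturbation; with only nefness it breaks.) Similarly, the reduction through a dlt modification is unnecessary and does not help; you would still have a non-klt boundary with coefficient-one components. The paper's proof avoids all of this: it observes that Ou's argument for $\Delta = 0$ carries over verbatim once one notes that the single place where the boundary enters is the application of Ou's Theorem 1.10, and that theorem is already stated for pairs $(X,\Delta)$, giving the chain $K_{\mathcal{F}} \cdot \alpha \geq (K_X + \Delta_{\mathrm{ver}}) \cdot \alpha \geq K_X \cdot \alpha$ which suffices.
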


Equivalently, we have $\mu^{min}_{\alpha}(T_{X}) \geq 0$ for every nef curve class $\alpha$ on $X$.

\begin{proof}
While \cite[Theorem 1.4]{Ou23} only states the result when $\Delta=0$, the argument works in this more general context.  The only point where the proof of \cite[Theorem 1.4]{Ou23} needs to be modified for $\Delta \neq 0$ is when we apply \cite[Theorem 1.10]{Ou23}.  Note however that \cite[Theorem 1.10]{Ou23} is stated for pairs $(X,\Delta)$ and yields the chain of inequalities
\begin{equation*}
K_{\mathcal{F}} \cdot \alpha \geq (K_{X} + \Delta_{ver}) \cdot \alpha \geq K_{X} \cdot \alpha
\end{equation*}
for $\alpha \in \Nef_{1}(X)$ which is all that is needed to finish the proof.
\end{proof}

We will be interested in lc Fano pairs which satisfy a slightly stronger property.

\begin{definition} \label{defi:pqp}
 Let $X$ be a normal projective variety over an algebraically closed field of arbitrary characteristic.  Suppose $\mathcal{E}$ is a torsion-free sheaf on $X$.  We say that $\mathcal{E}$ has the positive quotient property (PQP) if for every surjection $\mathcal{E} \to \mathcal{Q}$ onto a positive rank torsion-free sheaf, the first Chern class $c_{1}(\mathcal{Q})$ lies in $\Eff_{n-1}(X)$ and is not numerically trivial.
\end{definition}

The following result from \cite{CP19} shows that if $T_{X}$ fails the PQP, then there is a rational map associated to the largest quotient of $T_{X}$ with numerically trivial first Chern class.  

\begin{lemma} \label{lemm:trivquotient}
Let $(X,\Delta)$ be a projective $\mathbb{Q}$-factorial lc pair over an algebraically closed field of characteristic $0$.  Suppose that $-(K_{X} + \Delta)$ is nef.  Amongst all surjections $T_{X} \to \mathcal{Q}$ to torsion-free sheaves such that $c_{1}(\mathcal{Q})$ is numerically trivial there is a unique ``maximal quotient'', i.e.~a quotient $T_{X} \to \mathcal{Q}_{max}$ through which all other such morphisms factor.  The kernel $\mathcal{F}$ of this surjection defines an algebraically integrable foliation with rationally connected leaves.
\end{lemma}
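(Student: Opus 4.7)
The plan is to split the proof into two parts: an elementary sheaf-theoretic construction of $\mathcal{Q}_{max}$ using Theorem~\ref{theo:ou}, followed by an appeal to the foliation-theoretic machinery of \cite{CP19} for the geometric statements about the kernel $\mathcal{F}$.

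For existence of $\mathcal{Q}_{max}$, I would work with the poset of saturated subsheaves $\mathcal{F}' \subset T_X$ for which $T_X/\mathcal{F}'$ is torsion-free with numerically trivial first Chern class, and aim to show that this poset is closed under pairwise intersection, hence admits a unique minimum by Noetherianity. Given two such $\mathcal{F}_1$ and $\mathcal{F}_2$, set $\mathcal{F}_{12} := \mathcal{F}_1 \cap \mathcal{F}_2$ and note that $T_X/\mathcal{F}_{12}$ embeds diagonally into $(T_X/\mathcal{F}_1) \oplus (T_X/\mathcal{F}_2)$, so is torsion-free. The kernel $K := \mathcal{F}_1/\mathcal{F}_{12}$ of the projection $T_X/\mathcal{F}_{12} \to T_X/\mathcal{F}_1$ naturally injects into $T_X/\mathcal{F}_2$. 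Applying Theorem~\ref{theo:ou} to the torsion-free quotient $T_X/\mathcal{F}_{12}$ of $T_X$ yields $c_1(K) \cdot \alpha \geq 0$ for every nef curve class $\alpha$, while applying the same theorem to the torsion-free quotient $(T_X/\mathcal{F}_2)/\widetilde{K}$ of $T_X$ (where $\widetilde{K}$ denotes the saturation of $K$ inside $T_X/\mathcal{F}_2$) yields $c_1(\widetilde{K}) \cdot \alpha \leq 0$. Since $\widetilde{K}/K$ is a torsion sheaf of codimension one, $c_1(\widetilde{K}) - c_1(K)$ is represented by an effective divisor, so combining the two inequalities forces $c_1(K) \cdot \alpha = 0$ for every nef $\alpha$, and hence $c_1(T_X/\mathcal{F}_{12}) = c_1(K) \equiv 0$ numerically by full-dimensionality of the nef cone of curves.

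For the geometric content, I would observe that $c_1(\mathcal{Q}_{max}) \equiv 0$ gives $c_1(\mathcal{F}) = -K_X$, and therefore $K_\mathcal{F} = K_X$, so the hypothesis that $-(K_X+\Delta)$ is nef translates to $-(K_\mathcal{F}+\Delta)$ being nef for $\mathcal{F}$. Involutivity of $\mathcal{F}$ and rational connectedness of the leaves of the resulting algebraically integrable foliation both follow from the foliation-theoretic results of \cite{CP19}: the minimal saturated subsheaf of $T_X$ whose torsion-free quotient has numerically trivial $c_1$ is automatically a foliation via their results on involutivity of maximal destabilizing subsheaves of $T_X$ with respect to nef curve classes, and such foliations on mildly singular pairs satisfying our positivity hypothesis have algebraically integrable rationally connected leaves.

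I expect the geometric step to be the main obstacle. The existence of $\mathcal{Q}_{max}$ is essentially a clean diagram chase once Theorem~\ref{theo:ou} is in hand, whereas establishing involutivity of $\mathcal{F}$ and rational connectedness of its leaves in the singular lc setting genuinely requires the substantial foliation machinery of \cite{CP19}, including reductions between the klt and lc cases via $\mathbb{Q}$-factorialization as reviewed in Section~\ref{sect:MMP}.
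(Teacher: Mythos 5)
Your lattice-theoretic construction of $\mathcal{Q}_{max}$---showing that the poset of saturated subsheaves $\mathcal{F}' \subset T_X$ with torsion-free quotient of numerically trivial $c_1$ is closed under pairwise intersection and invoking Noetherianity---is a valid alternative to the paper's construction, which instead identifies $\mathcal{Q}_{max}$ directly as the last graded piece $\mathcal{F}_s/\mathcal{F}_{s-1}$ of the $\alpha$-Harder--Narasimhan filtration of $T_X$ for $\alpha$ an interior class of $\Nef_1(X)$. Both rely essentially on Theorem~\ref{theo:ou}, and your sandwich argument for $c_1(K) \equiv 0$ is correct: $c_1(K)\cdot\alpha \geq 0$ from the quotient $T_X/\mathcal{F}_{12}$, $c_1(\widetilde{K})\cdot\alpha \leq 0$ from $(T_X/\mathcal{F}_2)/\widetilde{K}$, comparison via the effective divisor $c_1(\widetilde{K}) - c_1(K)$, and promotion of the vanishing to numerical triviality by full-dimensionality of $\Nef_1(X)$.

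The geometric step, however, has a genuine gap. Both the involutivity of $\mathcal{F}$ and the hypothesis of \cite[Theorem 1.1]{CP19} (used for algebraic integrability and rational connectedness of the leaves) require the slope positivity $\mu^{min}_\alpha(\mathcal{F}) > 0$ for $\alpha$ in the interior of $\Nef_1(X)$, not merely the nefness of $-(K_\mathcal{F}+\Delta)$ that you derive; the latter is strictly weaker and in particular does not by itself rule out $\mathcal{F}$ having a torsion-free quotient with numerically trivial first Chern class. You assert involutivity via ``results on maximal destabilizing subsheaves,'' but your $\mathcal{F}$ is characterized abstractly as the minimum of a poset and you have not linked it to any destabilizing or Harder--Narasimhan structure. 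The paper closes exactly this gap by showing that every torsion-free quotient of $T_X$ with numerically trivial $c_1$ must factor through $\mathcal{F}_s/\mathcal{F}_{s-1}$ (a slope argument using Theorem~\ref{theo:ou}) and that $\mathcal{F}_s/\mathcal{F}_{s-1}$ is itself semistable of $\alpha$-slope zero, so $\mathcal{F} = \mathcal{F}_{s-1}$; then $\mu^{min}_\alpha(\mathcal{F}) > \mu_\alpha(\mathcal{F}_s/\mathcal{F}_{s-1}) = 0$ is automatic from the strictly decreasing HN slopes, involutivity is \cite[Proposition 1.3.32]{Pang15}, and \cite[Theorem 1.1]{CP19} applies on a smooth model using \cite[Proposition 2.8]{GKP16}. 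If you want to stay within your poset framework, you would need to add a step---e.g.\ applying another slope sandwich to the saturation in $T_X$ of $\ker(\mathcal{F}\to\mathcal{Q}')$ for an arbitrary torsion-free quotient $\mathcal{Q}'$ of $\mathcal{F}$, and using minimality of $\mathcal{F}$---to conclude $\mu^{min}_\alpha(\mathcal{F}) > 0$ directly; this step cannot be omitted.
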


\begin{proof}
If there are no quotients of slope $0$, $\mathcal{Q}_{max} = 0$ and the foliation is trivial.  Otherwise, choose a curve class $\alpha$ in the interior of $\Nef_{1}(X)$ and consider the $\alpha$-Harder-Narasimhan filtration of $T_{X}$:
\begin{equation*}
0 = \mathcal{F}_{0} \subset \mathcal{F}_{1} \subset \mathcal{F}_{2} \subset \ldots \subset \mathcal{F}_{s} = T_{X}.
\end{equation*}
Theorem \ref{theo:ou} shows that every successive quotient $\mathcal{F}_{i}/\mathcal{F}_{i-1}$ has non-negative $\alpha$-slope.  Thus every quotient of slope $0$ must be a quotient of the final term $\mathcal{F}_{s}/\mathcal{F}_{s-1}$.   Due to the semistability of $\mathcal{F}_{s}/\mathcal{F}_{s-1}$ we see that it also has slope zero so that $\mathcal{Q}_{max} = \mathcal{F}_{s}/\mathcal{F}_{s-1}$.

\cite[Proposition 1.3.32]{Pang15} shows that $\mathcal{F}_{s-1}$ is a foliation.  Consider a smooth birational model $\phi: X' \to X$ and let $\mathcal{F}'$ denote the foliation on $X'$ induced by $\mathcal{F}_{s-1}$.  Using the compatibility of slopes with pull-back of nef curve classes as established by \cite[Proposition 2.8]{GKP16}, we see that $\mu^{min}_{\phi^{*}\alpha}(\mathcal{F}') > 0$.  Thus $\mathcal{F}'$ (and equivalently $\mathcal{F}_{s-1}$) is algebraically integrable with rationally connected leaves by \cite[Theorem 1.1]{CP19}.
\end{proof}

The arguments of \cite{Ou23} show that the tangent bundle of a $\mathbb{Q}$-factorial dlt Fano variety always has the PQP.

\begin{proposition} \label{prop:kltpqp}
Let $(X,\Delta)$ be a $\mathbb{Q}$-factorial dlt Fano pair over an algebraically closed field of characteristic $0$.  Then for every surjection $T_{X} \to \mathcal{Q}$ to a positive rank torsion-free sheaf $\mathcal{Q}$ we have $c_{1}(\mathcal{Q})$ is pseudo-effective and is not numerically trivial.
\end{proposition}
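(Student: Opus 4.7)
The first conclusion, pseudo-effectivity of $c_1(\mathcal{Q})$ for every positive-rank torsion-free quotient $T_X \to \mathcal{Q}$, is immediate from Theorem \ref{theo:ou}, since a dlt pair is in particular lc. The entire substantive content of the proposition is therefore the non-triviality assertion, which I would establish by contradiction, leveraging Lemma \ref{lemm:trivquotient}.

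Suppose there existed a surjection $T_X \to \mathcal{Q}$ onto a positive-rank torsion-free sheaf with $c_1(\mathcal{Q})$ numerically trivial. Lemma \ref{lemm:trivquotient} then produces a maximal such quotient $T_X \to \mathcal{Q}_{\max}$ whose kernel $\mathcal{F}$ is an algebraically integrable foliation with rationally connected leaves. Algebraic integrability supplies a dominant rational map $\pi: X \dashrightarrow Y$ whose general fiber is a leaf of $\mathcal{F}$; the positivity of $\operatorname{rank}(\mathcal{Q}_{\max})$ forces $\dim Y > 0$, and general fibers of $\pi$ are rationally connected.

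To close the argument, I would use the universal property of the maximal rationally connected fibration. Every dominant rational map with rationally connected general fibers factors through the MRC fibration of its source; so if $X$ itself is rationally connected, then the MRC of $X$ is a point, forcing $\pi$ to be constant and contradicting $\dim Y > 0$. To establish the rational connectedness of $X$, I would invoke the perturbation principle recorded in Section \ref{sect:MMP}: one can choose a sufficiently small perturbation $\Delta'$ of $\Delta$ so that $(X, \Delta')$ is klt with $-(K_X + \Delta')$ still ample, i.e., klt Fano, and then apply the theorems of \cite{HM07, Zhang06} guaranteeing that the underlying variety of a klt Fano pair is rationally connected.

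Given Theorem \ref{theo:ou} and Lemma \ref{lemm:trivquotient} as inputs, no serious obstacle remains; the proof is a short formal consequence of these results together with the rational connectedness of dlt Fano varieties. The only conceptual step is to notice that a slope-zero quotient of $T_X$ yields an RC-fibered structure on $X$ with positive-dimensional base, which is incompatible with the MRC fibration of $X$ being constant.
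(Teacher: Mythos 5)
The first part of your argument is fine: pseudo-effectivity follows directly from Theorem \ref{theo:ou}, and Lemma \ref{lemm:trivquotient} correctly produces the maximal quotient $\mathcal{Q}_{\max}$ whose kernel is an algebraically integrable foliation with rationally connected leaves, hence a rational map $\pi: X \dashrightarrow Y$ with $\dim Y > 0$ and rationally connected general fibers. The gap is in the final step.

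You assert that ``every dominant rational map with rationally connected general fibers factors through the MRC fibration of its source,'' and deduce that rational connectedness of $X$ forces $\pi$ to be constant. This has the factoring direction backwards. The MRC fibration $X \dashrightarrow R(X)$ is characterized by its fibers being \emph{maximal} rationally connected subvarieties; consequently, for an arbitrary RC-fibration $\pi: X \dashrightarrow Y$, it is the MRC fibration that factors through $\pi$ (i.e., one gets $Y \dashrightarrow R(X)$), not the other way around. A rationally connected variety can carry plenty of nonconstant RC-fibrations with positive-dimensional base --- the projection $\mathbb{P}^1 \times \mathbb{P}^1 \to \mathbb{P}^1$ or a linear projection $\mathbb{P}^2 \dashrightarrow \mathbb{P}^1$ are immediate examples --- so rational connectedness of $X$ gives no contradiction with $\dim Y > 0$. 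The paper closes the argument differently: it takes $D$ to be the closure of $\pi^{*}H$ for $H$ very ample on $Y$, perturbs to a klt pair $(X, \Delta' + \epsilon D)$ that is still Fano, and then applies \cite[Theorem 1.10]{Ou23} to conclude that $c_{1}(\mathcal{Q}_{\max}) - \epsilon D$ is pseudo-effective, which directly contradicts numerical triviality of $c_{1}(\mathcal{Q}_{\max})$ since $D$ is not numerically trivial. You would need to replace the MRC step with something along these lines.
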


Equivalently, we have $\mu^{min}_{\alpha}(T_{X}) > 0$ for every nef curve class in the interior of $\Nef_{1}(X)$.

\begin{proof}
By perturbing $\Delta$ we can find an effective divisor $\Delta'$ such that $(X,\Delta')$ is a $\mathbb{Q}$-factorial klt Fano pair.
Theorem \ref{theo:ou} shows that every positive rank torsion-free quotient of $T_{X}$ has pseudo-effective first Chern class.  Suppose for a contradiction that there exists a quotient whose first Chern class is numerically trivial.  Let $\phi: X \dashrightarrow Z$ denote the rational map induced by the maximal quotient $T_{X} \to \mathcal{Q}_{max}$ as in Lemma \ref{lemm:trivquotient}.  Let $D$ denote an effective Weil divisor on $X$ defined by pulling back a very ample divisor $H$ on $Z$ and taking its closure.  Choose $\epsilon > 0$ such that $(X, \Delta' + \epsilon D)$ is klt and $-(K_{X} + \Delta' + \epsilon D)$ is still ample.  Then \cite[Theorem 1.10]{Ou23} shows that $c_{1}(\mathcal{Q}_{max}) - \epsilon D$ is pseudo-effective.  Thus $c_{1}(\mathcal{Q}_{max})$ is not numerically trivial, a contradiction.
\end{proof}

\section{Existence of free curves}

This section is devoted to proving the existence of free curves in the smooth locus of a mildly singular Fano variety.  In the first section, we use characteristic $p$ techniques to demonstrate that a curve $s: C \to X^{sm}$ such that $s^{*}T_{X}$ is ample can be modified to obtain a free curve.  In the later sections, we show to how verify the existence of curves $s: C \to X^{sm}$ such that $s^{*}T_{X}$ is ample and then use these constructions to prove Theorem \ref{theo:dltcase}, Theorem \ref{theo:lccase}, and several variants.  

\subsection{Finding free curves}

We start with a well-known lemma (see e.g.~\cite[Section 2]{Gounelas16}) about Frobenius pullbacks of ample vector bundles on curves.  One can also give an effective version of this lemma using \cite[Corollary 6.2]{Langer04}.

\begin{lemma} \label{lemm:frobpullback}
Let $k$ be a field of characteristic $p$.  Suppose that $C$ is a smooth projective curve over $k$ and $\mathcal{E}$ is an ample vector bundle on $C$.  For any positive number $r$, there is a positive integer $e_{0}$ such that the iterated Frobenius pullback $F_{rel}^{e}: C^{(-e)} \to C$ satisfies $\mu^{min}(F_{rel}^{e*}\mathcal{E}) > r$ for every $e \geq e_{0}$.
\end{lemma}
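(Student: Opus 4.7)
The plan is to reduce to the strongly semistable case via the Harder--Narasimhan filtration. The key point is that once the HN filtration of some iterated Frobenius pullback $F^{N*}\mathcal{E}$ is \emph{strong}---meaning every graded piece remains semistable under all further Frobenius pullbacks---additional applications of Frobenius simply multiply each HN slope by $p$. Since ampleness keeps the minimal HN slope strictly positive, iterating Frobenius then drives $\mu^{\min}$ to infinity.

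Concretely, I would carry out three steps. First, observe that Frobenius pullback preserves ampleness on the curve $C$, so $F^{k*}\mathcal{E}$ is ample for every $k \geq 0$; in particular every torsion-free quotient has positive degree, forcing $\mu^{\min}(F^{k*}\mathcal{E}) > 0$. Second, invoke Langer's theorem on the asymptotic behavior of HN filtrations in positive characteristic (\cite[Corollary 6.2]{Langer04}) to obtain an integer $N$ for which the Harder--Narasimhan filtration
\begin{equation*}
0 = \mathcal{G}_{0} \subset \mathcal{G}_{1} \subset \cdots \subset \mathcal{G}_{s} = F^{N*}\mathcal{E}
\end{equation*}
is strong; write $\mu_{i}$ for the slope of $\mathcal{G}_{i}/\mathcal{G}_{i-1}$, so $\mu_{1} > \cdots > \mu_{s}$ and by the first step $\mu_{s} = \mu^{\min}(F^{N*}\mathcal{E}) > 0$. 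Third, for any $m \geq 0$ pull back by $F^{m}$: because each graded piece is strongly semistable, $F^{m*}\mathcal{G}_{\bullet}$ is still the HN filtration of $F^{(N+m)*}\mathcal{E}$, with graded pieces semistable of slope $p^{m}\mu_{i}$. Consequently
\begin{equation*}
\mu^{\min}\bigl(F^{(N+m)*}\mathcal{E}\bigr) \;=\; p^{m}\mu_{s},
\end{equation*}
which exceeds $r$ once $m$ is large enough.

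The main obstacle is really step two: establishing that the HN filtration becomes strong after enough Frobenius pullbacks is the substantive ingredient, and it is exactly what Langer provides. The remaining bookkeeping is streamlined by the curve setting, where semistability is just a numerical condition on degrees of sub- and quotient bundles, and where ampleness of $\mathcal{E}$ cleanly transfers through Frobenius to guarantee the strict positivity of $\mu_{s}$ needed to make the final scaling argument work.
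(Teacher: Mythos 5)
Your proof is correct, but it takes a genuinely different route from the paper's. The paper proceeds by induction on the rank of $\mathcal{E}$: the line-bundle case is immediate; if $F^{k*}\mathcal{E}$ stays semistable for all $k$, the slope grows as $p^{k}\mu(\mathcal{E}) \geq p^{k}/\rk(\mathcal{E})$; and if some $F^{j*}\mathcal{E}$ is unstable, its HN graded pieces are ample of strictly smaller rank (a quotient of an ample bundle on a curve has positive minimal slope, hence is ample), so one invokes the inductive hypothesis. Your argument instead jumps straight to Langer's theorem \cite[Corollary 6.2]{Langer04}, which guarantees that the HN filtration of $F^{N*}\mathcal{E}$ becomes strong for large $N$, after which every further Frobenius pullback just rescales all HN slopes by $p$. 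The two proofs buy different things: the paper's is elementary and self-contained, needing only basic facts about ample bundles and HN filtrations on curves, while yours is conceptually cleaner and, as the authors themselves point out in the sentence preceding the lemma, gives an \emph{effective} bound on the number of Frobenius pullbacks required (since Langer's bound on when the HN filtration stabilizes is explicit). In short, you have filled in exactly the alternative argument the authors flagged as possible but chose not to pursue, and both arguments are sound; your step-one observation that ampleness is preserved under Frobenius and pins $\mu_{s} > 0$ is precisely what makes the final $p^{m}\mu_{s} \to \infty$ scaling work.
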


\begin{proof}
The proof is by induction on the rank.  When $\mathcal{E}$ is a line bundle, then $\mu^{min}(F_{rel}^{e*}\mathcal{E}) = p^{e} \mu(\mathcal{E}) \geq p^{e}$ and it suffices to take $e > \log_{p}(r)$.

In general, first suppose that $F_{rel}^{e*}\mathcal{E}$ is semistable for every $e \geq 0$.  Then $\mu^{min}(F_{rel}^{e*}\mathcal{E}) = p^{e} \mu(\mathcal{E}) \geq p^{e} \frac{1}{\rk(\mathcal{E})}$ so it suffices to choose $e > \log_{p}(r \rk(\mathcal{E}))$.  Next suppose that $F_{rel}^{j*}\mathcal{E}$ is not semistable for some $j \geq 0$.  Since $F_{rel}^{j*}\mathcal{E}$ is ample, every successive quotient in the Harder-Narasimhan filtration is ample and we conclude by induction on the rank.
\end{proof}

The following theorem shows that if we can identify one ``positive'' curve in the smooth locus of $X$ then we can modify it to obtain a free curve in the smooth locus of $X$.  The argument is similar to Mori's proof of the Bend-and-Break theorem: we spread out to characteristic $p$ and use the Frobenius morphism to improve the positivity of the curve.  There are related statements in \cite[Section 4]{KSCT07} and \cite[Theorem 5.5]{Gounelas16}.

\begin{theorem} \label{theo:positivetofree}
Let $X$ be a projective variety of dimension $n$ over an algebraically closed field.  Let $U$ be an open subset of the smooth locus of $X$.  Suppose that $C$ is a smooth projective curve and $s: C \to X$ is a morphism such that $s(C) \subset U$ and $s^{*}T_{X}$ is ample.

Then for any positive number $r$, there is an $r$-free curve $s': C \to X$ whose image is contained in $U$ and whose numerical class in $N^{n-1}(X)_{\mathbb{R}}$ is proportional to $s_{*}C$.
\end{theorem}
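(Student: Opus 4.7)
The plan is to follow Mori's Bend-and-Break philosophy: settle the positive characteristic case directly using the Frobenius, then reduce the characteristic zero case to it by spreading out and lifting via deformation theory.

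Suppose first that the ground field has characteristic $p > 0$. Since $C$ is a smooth curve and $\mu^{min}(s^{*}T_{X}) > 0$, the vector bundle $s^{*}T_{X}$ is ample. Applying Lemma \ref{lemm:frobpullback} with threshold $2g(C)+r$ produces an integer $k$ with $\mu^{min}(F^{k*}s^{*}T_{X}) > 2g(C)+r$, where $F^{k}:C \to C$ is the $k$-fold iterated relative Frobenius. Setting $s' = s \circ F^{k}$ gives $(s')^{*}T_{X} = F^{k*}s^{*}T_{X}$, so $s'$ is $r$-free. Its image equals $s(C) \subset U$, and $s'_{*}C = p^{k}\, s_{*}C$ is proportional to $s_{*}C$ in $N^{n-1}(X)_{\mathbb{R}}$.

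For the characteristic zero case, I would spread out $(X, C, s, U)$ over a finitely generated $\mathbb{Z}$-subalgebra $R \subset k$ to flat models $X_{R}, C_{R}, s_{R}, U_{R}$. After shrinking $\Spec R$, one can arrange that the relative Harder-Narasimhan filtration of $s_{R}^{*}T_{X_{R}/R}$ is compatible with base change to every geometric fiber; this is a standard openness-in-families result and ensures $\mu^{min}(s_{\mathfrak{p}}^{*}T_{X_{\mathfrak{p}}}) > 0$ at every closed point $\mathfrak{p}$, while the image condition persists automatically. Pick a closed $\mathfrak{p}$ with residue characteristic $p > 0$ and apply the positive-characteristic construction over $\overline{\kappa(\mathfrak{p})}$ to obtain an $r$-free curve $\tilde s : C_{\mathfrak{p}} \to X_{\mathfrak{p}}$ with image in $U_{\mathfrak{p}}$.

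To lift $\tilde s$ back to characteristic zero, observe that the $r$-freeness of $\tilde s$ forces $\mu^{min}(\tilde s^{*}T_{X_{\mathfrak{p}}}) \geq 2g(C)+r > 2g(C)-2$, so $H^{1}(C_{\mathfrak{p}}, \tilde s^{*}T_{X_{\mathfrak{p}}}) = 0$ by Serre duality. Combined with the fact that $\tilde s$ lands in the smooth locus, this makes the relative morphism scheme $\Mor(C_{R}, X_{R}/R)$ smooth over $R$ at $[\tilde s]$. Hence a smooth open neighborhood of $[\tilde s]$ dominates an open of $\Spec R$ meeting the generic point, and base changing back to $k$ yields the desired morphism $s' : C \to X$. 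The properties of being $r$-free and having image in $U$ are open on $\Mor$, while the numerical class is locally constant in a flat family, so all three conditions (including proportionality with factor $p^{k}$) propagate from $[\tilde s]$ to $s'$.

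I expect the main technical obstacle to be arranging, on a single dense open of $\Spec R$, the simultaneous control of slope data, the image condition, and the compatibility of numerical classes with specialization. Once these are in place, the lifting step is routine deformation theory driven by the vanishing of $H^{1}$, which is itself a direct consequence of the $r$-freeness produced by the Frobenius trick in positive characteristic.
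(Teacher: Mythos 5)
Your overall strategy matches the paper's almost exactly: the Frobenius trick via Lemma \ref{lemm:frobpullback} in positive characteristic, spreading out over a finitely generated $\mathbb{Z}$-algebra, preservation of Harder--Narasimhan slopes under specialization, and lifting back to the generic fiber via vanishing of $H^1$ and openness of smooth morphisms. All of this is correct and is precisely what the paper does.

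The one genuine gap is your treatment of the numerical class proportionality. You assert that ``the numerical class is locally constant in a flat family,'' but $X$ is not smooth, and the relevant group $N^{n-1}(X)_{\mathbb{R}}$ is defined via Chern classes of vector bundles (a Weil-type group), not via Cartier divisors. There is no off-the-shelf specialization theory for $N^{n-1}$ of singular varieties that makes your claim immediate: the curve classes $\tilde s_{*}C_{\mathfrak p}$ and $s'_{*}C$ live in the $N^{n-1}$ groups of \emph{different} geometric fibers, and one must construct a specialization homomorphism and argue that it is injective. The paper addresses this by also spreading out a resolution of singularities $\phi: Y \to X$ that is an isomorphism over $U$, transferring the family of curves to the smooth family $\mathcal{Y}/\Spec(R)$, invoking \cite[Proposition 3.6]{MP12} to get an injective specialization map on $N^{1}$ of the geometric fibers of $\mathcal{Y}$, identifying $N_{1}(Y)_{\mathbb{R}}\cong N^{n-1}(Y)_{\mathbb{R}}$ since $Y$ is smooth, and finally using the injectivity of $\phi^{*}: N^{n-1}(X)_{\mathbb{R}} \to N^{n-1}(Y)_{\mathbb{R}}$ to descend the proportionality to $X$. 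Without this detour through the resolution, the final sentence of your argument does not close. Everything else in your proposal is sound.
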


\begin{proof}
When $k$ has characteristic $0$, we start by spreading out to characteristic $p$.  Set $Z = X \backslash U$ and choose a resolution of singularities $\phi: Y \to X$ that is an isomorphism over $U$.  Choose a finitely generated extension $K/\mathbb{Q}$ such that $X$, $Z$, $C$, $Y$, and the morphisms $s$, $\phi$ are defined over $K$.  Choose an integral domain $R$ that is finitely generated over $\mathbb{Z}$ whose fraction field is equal to $K$.   We will repeatedly replace $\Spec(R)$ by a smaller open affine subset, which by an abuse of notation we will continue to call $\Spec(R)$.

After perhaps replacing $\Spec(R)$ by an open affine subset,  we can spread $X$, $Z$, and $C$ out to schemes $\mathcal{X}$, $\mathcal{Z}$ which are flat and projective over $\Spec(R)$ and a scheme $\mathcal{C}$ which is smooth projective with geometrically integral fibers over $\Spec(R)$.  We set $\mathcal{U} = \mathcal{X} \backslash \mathcal{Z}$; we may assume $\mathcal{U}$ is smooth over $\Spec(R)$ by shrinking $\Spec(R)$ if necessary.  Consider the relative hom scheme $\Mor_{R}(\mathcal{C},\mathcal{X})$.  After perhaps shrinking $\Spec(R)$ further, the morphism $s: C \to X$ spreads out to a morphism $\tilde{s}: \mathcal{C} \to \mathcal{X}$ whose image is contained in $\mathcal{U}$.  By openness of ampleness, after shrinking further we may ensure that for every $\fp \in \Spec(R)$ the bundle $\tilde{s}_{\fp}^{*}T_{\mathcal{X}_{\fp}}$ is ample. Finally, we can also ensure that $\phi$ spreads out to give a family of resolutions $\phi_{R}: \mathcal{Y} \to \mathcal{X}$ over $\Spec(R)$ that are isomorphisms over $\mathcal{U}$.

Let $\fp$ be a maximal ideal of $R$.  Note that the residue field at $\fp$ is finite and that $\mathcal{C}_{\fp}$ is a smooth projective geometrically integral curve.  By Lemma \ref{lemm:frobpullback}, there is an iterated relative Frobenius morphism $F_{rel}^{e}: \mathcal{C}_{\fp}^{(-e)} \to \mathcal{C}_{\fp}$ such that $\mu^{min}(F_{rel}^{e*}\tilde{s}_{\fp}^{*}T_{\mathcal{X}_{\fp}}) \geq 2g(C) + r$.  Since the residue field is finite, after possibly passing to a higher twist we may ensure that $\mathcal{C}_{\fp}^{(-e)}$ is isomorphic to $\mathcal{C}_{\fp}$ as a $\kappa(\fp)$-scheme.  Thus the composition $\tilde{s}'_{\fp} := \tilde{s}_{\fp} \circ F_{rel}^{e}: \mathcal{C}_{\fp} \to \mathcal{X}_{\fp}$ is an $r$-free curve.  It is clear that the numerical class of $\tilde{s}'_{\fp *}\mathcal{C}_{\fp}$ is proportional to the class of $\tilde{s}_{\fp *}\mathcal{C}_{\fp}$. 

Finally, we deform back to the general fiber using the freeness of $\tilde{s}'_{\fp}$. Since the obstruction group $H^{1}(\mathcal{C}_{\fp}, \tilde{s}_{\fp}'^{*}T_{\mathcal{X}_{\fp}})$ vanishes, \cite[I.2.17 Theorem]{Kollar} shows that every irreducible component $M$ of $\Mor_{R}(\mathcal{C},\mathcal{X})$ that contains $\tilde{s}'_{\fp}$ has dimension at least $H^{0}(\mathcal{C}_{\fp}, \tilde{s}_{\fp}'^{*}T_{\mathcal{X}_{\fp}}) + \dim_{\fp}(\Spec(R))$.  Furthermore, since $\tilde{s}'_{\fp}$ is free its deformations in $\mathcal{X}_{\fp}$ have the expected dimension.  By upper semicontinuity of fiber dimensions, we conclude that the map from $M$ to $\Spec(R)$ is dominant, and in particular, that the image contains the generic point.  Using \cite[Theorem 5]{Nitsure11} we see that a general map parametrized by $M$ over the generic point will be an $r$-free curve $s': C \to X$.  Since a general deformation of $\tilde{s}'_{\fp}$ remains in $\mathcal{U}$, the image of $s'$ is in $U$.

It only remains to verify the proportionality of the numerical classes.  Consider the strict transform over $\phi_{R}: \mathcal{Y} \to \mathcal{X}$ of the constructed family of deformations of $\tilde{s}_{\mathfrak{p}}: \mathcal{C}_{\mathfrak{p}} \to \mathcal{X}_{\mathfrak{p}}$.  Letting $\pi: \mathcal{Y} \to \Spec(R)$ denote the structure map, \cite[Proposition 3.6]{MP12} constructs an injective morphism on numerical spaces of Cartier divisors from the geometric fiber of $\pi$ over the generic point of $\Spec(R)$ to the geometric fiber of $\pi$ over $\fp$. Thus on $Y$ we conclude that the numerical class of $s'_{*}C$ is proportional to the class of $s_{*}C$ as an element of $N_{1}(Y)_{\mathbb{R}}$.  Using the isomorphism $N_{1}(Y)_{\mathbb{R}} \cong N^{n-1}(Y)_{\mathbb{R}}$ and the injectivity of $\phi^{*}: N^{n-1}(X)_{\mathbb{R}} \to N^{n-1}(Y)_{\mathbb{R}}$ we obtain the desired proportionality on $X$.

When $k$ has characteristic $p$, we run a similar argument.  Choose a finitely generated extension $K/\mathbb{F}_{p}$ such that $X$, $C$, and the morphism $s$ are defined over $K$.  We may also ensure that there is a finite collection of Weil divisors $\{ D_{i} \}$ on $X$ that span $N_{n-1}(X)_{\mathbb{R}}$ and are defined over $K$.  Choose an integral domain $R$ that is finitely generated over $\mathbb{F}_{p}$ whose fraction field is equal to $K$.  By the same shrinking process as before, we may assume the same properties as before for the families $\mathcal{X}, \mathcal{C}$ and the morphism $\tilde{s}: \mathcal{C} \to \mathcal{X}$ obtained by spreading out over $\Spec(R)$.  After shrinking $\Spec(R)$ further, we may also suppose that every $D_{i}$ spreads out to a divisor $\mathcal{D}_{i}$ that is flat over $\Spec(R)$.

Let $\fp$ be a maximal ideal of $\Spec(R)$ so that the residue field at $\fp$ is finite.  By precomposing $\tilde{s}_{\fp}$ with an $e$th iterate of the Frobenius for an appropriate choice of $e$, we find an $r$-free curve over $\fp$, and this deforms to a map $\tilde{s}': \mathcal{C} \to \mathcal{X}$ such that the restriction to the generic fiber is an $r$-free curve $s': C \to X$.

It only remains to prove proportionality of numerical classes.  We compare the map $\tilde{s}': \mathcal{C} \to \mathcal{X}$ to the map $\hat{s}: \hat{\mathcal{C}} \to \mathcal{X}$ obtained by precomposing $\tilde{s}$ by the $e$th iterate of the relative Frobenius for $\mathcal{C} \to \Spec(R)$.  By taking the difference of the image cycles we obtain a family of $1$-cycles in the fibers of $\mathcal{X} \to \Spec(R)$ such that the $1$-cycle over $\fp$ is trivial.  In particular, on $\mathcal{X}_{\fp}$ the intersection (in the sense of Remark \ref{rema:defineint}) of this $1$-cycle against any of the $\mathcal{D}_{i,\mathfrak{p}}$  is zero.  By conservation of number (\cite[Theorem 10.2]{Fulton84}) we see that the same is true in the generic fiber, showing that $s'_{*}C$ is numerically proportional to a multiple of $s_{*}C$.
\end{proof}

\subsection{Finding starting curves}

In order to apply Theorem \ref{theo:positivetofree}, we must find ``positive'' curves in the smooth locus of Fano varieties.

\begin{proposition} \label{prop:lccurveexists}
Let $X$ be a normal projective variety of dimension $n$ over an algebraically closed field $k$.  Suppose there is an ample Cartier divisor $H$ such that $\mu^{min}_{H^{n-1}}(T_{X}) > 0$.  Then there is a curve $s:C \to X$ whose image is contained in the smooth locus of $X$ such that $s^{*}T_{X}$ is ample.
\end{proposition}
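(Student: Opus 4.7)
The plan is to exhibit $s : C \to X$ as (the normalization of) a general complete intersection curve cut out by $(n-1)$ divisors in $|mH|$ for $m \gg 0$, and then appeal to Mehta--Ramanathan to transfer the slope positivity of $T_X$ from the polarization $H^{n-1}$ to the restricted bundle $s^*T_X$.

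First I would use Bertini to produce the curve. Since $X$ is normal, the singular locus $X^{\sing}$ has codimension $\geq 2$. Hence for $m$ sufficiently large, Bertini's theorem (valid in arbitrary characteristic for base-point free linear systems, e.g.\ Jouanolou's version) guarantees that a general choice of $D_1,\dots,D_{n-1} \in |mH|$ yields a smooth irreducible curve $C := D_1 \cap \cdots \cap D_{n-1}$ that is entirely contained in $X^{\smooth}$; take $s : C \hookrightarrow X$ to be the inclusion.

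Next I would invoke the Mehta--Ramanathan theorem in the form proved by Langer, which holds for torsion-free sheaves on normal projective varieties in arbitrary characteristic. Applied to the Harder--Narasimhan filtration
\begin{equation*}
0 = \mathcal{F}_0 \subset \mathcal{F}_1 \subset \cdots \subset \mathcal{F}_t = T_X
\end{equation*}
with respect to $H^{n-1}$, it yields that for all $m$ sufficiently large and a general complete intersection curve $C$ as above, the restricted filtration $0 = s^*\mathcal{F}_0 \subset \cdots \subset s^*\mathcal{F}_t = s^*T_X$ is precisely the Harder--Narasimhan filtration of $s^*T_X$, and in particular its successive quotients remain semistable with
\begin{equation*}
\mu\bigl(s^*(\mathcal{F}_i/\mathcal{F}_{i-1})\bigr) \;=\; m^{n-1}\,\mu_{H^{n-1}}(\mathcal{F}_i/\mathcal{F}_{i-1}).
\end{equation*}
Taking $i = t$ gives $\mu^{\min}(s^*T_X) = m^{n-1} \mu^{\min}_{H^{n-1}}(T_X) > 0$, as desired.

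The only subtlety I anticipate is the applicability of Mehta--Ramanathan on a potentially singular normal variety in positive characteristic. This is handled by Langer's generalization: one works with the restriction of sheaves to complete intersections of members of $|mH|$ without any smoothness hypothesis on $X$, and the general curve already lies in $X^{\smooth}$ by the Bertini step, so $s^*T_X$ is locally free and ordinary slope stability on the smooth curve $C$ applies. Hence no genuine technical obstacle remains beyond citing the appropriate reference.
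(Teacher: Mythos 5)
Your proposal is correct and takes essentially the same approach as the paper: cut a general complete intersection curve in $|mH|$ for $m \gg 0$ (which lies in $X^{\smooth}$ since $\codim X^{\sing} \geq 2$) and invoke a Mehta--Ramanathan/Flenner/Langer-type restriction theorem to transfer $\mu^{\min}_{H^{n-1}}(T_X) > 0$ to $\mu^{\min}(s^*T_X) > 0$. The paper's proof is a one-liner citing Flenner's restriction theorem in characteristic $0$ and Langer's in characteristic $p$, which is exactly the machinery you are appealing to.
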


The argument looks a little different in characteristic $0$ and in characteristic $p$.  

\begin{proof}
In characteristic $0$, by repeatedly applying \cite[1.2 Theorem]{Flenner84} we see that a general complete intersection curve $C$ constructed by intersecting divisors linearly equivalent to a sufficiently positive multiple of $H$ will satisfy $\mu^{min}(s^{*}T_{X}) > 0$.  Since we are in characteristic $0$ this implies ampleness of $s^{*}T_{X}$ by \cite[Theorem 2.4]{Hartshorne71}.

In characteristic $p$, we denote the $H^{n-1}$-HN-filtration of $T_{X}$ by
\begin{equation*}
0 = \mathcal{F}_{0} \subset \mathcal{F}_{1} \subset \ldots \subset \mathcal{F}_{s} = T_{X}.
\end{equation*}
For a tuple of positive integers $(m_{1},\ldots,m_{n-1})$ such that each divisor $m_{i}H$ is very ample we let $\mathcal{C}_{\vec{m}}$ denote the generic fiber of the corresponding family of complete intersection curves.   Theorem \ref{theo:normalsemistability} shows that for an appropriate choice of $\vec{m}$ the restriction of each successive quotient $\mathcal{F}_{i}/\mathcal{F}_{i-1}$ to $\mathcal{C}_{\vec{m}}$ is strongly semistable of positive slope.  Since $\mathcal{C}_{\vec{m}}$ is smooth and geometrically integral, \cite[Proposition 3]{Langton75} shows that strong semistability is preserved by base change to the algebraic closure and we conclude that $T_{X}|_{\mathcal{C}_{\vec{m}}}$ is ample by \cite[Theorem 2.1]{Barton71}.  Spreading out, we see that for a general complete intersection curve $s:C \to X$ the sheaf $s^{*}T_{X}$ is ample.
\end{proof}

\subsection{Proving main results} \label{sect:mainexistence}

We are now prepared to prove some of the results from the introduction.

\begin{theorem} \label{theo:lcandantinef}
Let $(X,\Delta)$ be a $\mathbb{Q}$-factorial projective lc pair over an algebraically closed field of characteristic $0$.  Suppose $-(K_{X} + \Delta)$ is nef.  Then the following conditions are equivalent:
\begin{enumerate}
\item There is no positive rank quotient of $T_{X}$ with numerically trivial first Chern class. 
\item $X$ admits a $1$-free curve.
\item For every $m > 0$ we have $h^{0}(X^{sm}, \Omega_{X^{sm}}^{\otimes m}) = 0$.
\end{enumerate}
If these conditions hold, then there is a smooth projective curve $C$ such that for every $r \geq 0$ there is an $r$-free morphism $s: C \to X$.
\end{theorem}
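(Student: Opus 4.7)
The plan is to prove the implications $(1) \Rightarrow (2) \Rightarrow (3) \Rightarrow (1)$, with the concluding ``moreover'' statement emerging naturally from the proof of $(1) \Rightarrow (2)$.

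For $(1) \Rightarrow (2)$ together with the final assertion, I would first invoke Theorem~\ref{theo:ou} to conclude that every positive rank torsion-free quotient $T_X \twoheadrightarrow Q$ has $c_1(Q)$ pseudo-effective, and combine this with hypothesis (1) to get $c_1(Q) \in \Eff^1(X) \setminus \{0\}$ for each such quotient. For any ample divisor $H$, the class $H^{n-1}$ lies in the interior of $\Nef_1(X) = \Nef^{n-1}(X)$ (using $\mathbb{Q}$-factoriality) and therefore pairs strictly positively with any nonzero pseudo-effective class; applied to the minimal-slope quotient of the $H^{n-1}$-Harder-Narasimhan filtration of $T_X$ this yields $\mu^{min}_{H^{n-1}}(T_X) > 0$. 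Proposition~\ref{prop:lccurveexists} then produces a curve $s_0 : C \to X^{sm}$ with $\mu^{min}(s_0^*T_X) > 0$, and Theorem~\ref{theo:positivetofree} upgrades $s_0$, for each $r \geq 0$, to an $r$-free morphism $s_r : C \to X^{sm}$. The underlying curve $C$ is fixed throughout, proving (2) and the final statement simultaneously.

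For $(2) \Rightarrow (3)$, let $s : C \to X^{sm}$ be a $1$-free curve. By Proposition~\ref{prop:freeproperties}(2), the $2$-fold evaluation map of its deformation space is dominant, so the deformations of $s$ sweep out a dense open subset of $X$. Since $\mu^{min}(s^*T_X) \geq 2g(C) + 1 > 0$, in characteristic zero we have $\mu^{max}(s^*\Omega_X^{\otimes m}) = -m \cdot \mu^{min}(s^*T_X) < 0$, hence $H^0(C, s^*\Omega_X^{\otimes m}) = 0$ for every $m > 0$. Any global section of $\Omega_{X^{sm}}^{\otimes m}$ therefore restricts to zero on every $1$-free curve, and must vanish identically because these curves cover a dense open subset of $X^{sm}$.

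For $(3) \Rightarrow (1)$, I would argue by contrapositive: assume a positive rank quotient $T_X \twoheadrightarrow Q$ has $c_1(Q) \equiv 0$, and apply Lemma~\ref{lemm:trivquotient} to the maximal such quotient $Q_{max}$. Its kernel $\mathcal F$ is an algebraically integrable foliation with rationally connected leaves, inducing a rational quotient $\phi : X \dashrightarrow Z$ with $\dim Z = \rk Q_{max} > 0$. The identity $c_1(Q_{max}) = K_{\mathcal F} - K_X$ combined with $c_1(Q_{max}) \equiv 0$ gives $K_{\mathcal F} \equiv K_X$; on a birational model where $\mathcal F$ is the relative tangent sheaf of $\phi$, this translates to $\phi^*K_Z \equiv 0$, so $K_Z$ is numerically trivial. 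Applying an abundance result for the log Calabi--Yau target $Z$ produces a nonzero section of $\omega_Z^{\otimes m}$ for some $m > 0$; combined with the chain of inclusions $\phi^*\omega_Z^{\otimes m} \hookrightarrow (\wedge^{\dim Z}\Omega_{X'})^{\otimes m} \hookrightarrow \Omega_{X'}^{\otimes m \dim Z}$ and Hartogs extension on $X^{sm}$, this gives a nonzero section of $\Omega_{X^{sm}}^{\otimes m \dim Z}$, contradicting (3). The main obstacle lies precisely here: passing from numerical triviality of $K_Z$ to an actual section of a power of $\omega_Z$ requires invoking abundance for varieties of log Calabi--Yau type, together with control over the singularities of the MRC target $Z$ produced by the foliation.
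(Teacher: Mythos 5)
Your proof of $(1) \Rightarrow (2)$ (with the moreover statement) and $(2) \Rightarrow (3)$ matches the paper's argument step for step: Theorem~\ref{theo:ou} gives pseudo-effectivity, hypothesis $(1)$ rules out the numerically trivial case so $\mu^{min}_{H^{n-1}}(T_X) > 0$, and Proposition~\ref{prop:lccurveexists} plus Theorem~\ref{theo:positivetofree} produce the $r$-free curves on a fixed $C$; for $(2) \Rightarrow (3)$, sections of $\Omega_{X^{sm}}^{\otimes m}$ vanish along a dense family of $1$-free curves.

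Your $(3) \Rightarrow (1)$, however, takes a genuinely different and incomplete route, and the gap you flag at the end is real. The foliation-theoretic argument has several soft spots: the identity $K_{\mathcal F} \equiv K_X$ does not directly give $\phi^*K_Z \equiv 0$ unless you account for the ramification/discrepancy corrections in the canonical bundle formula for algebraically integrable foliations, and one has essentially no control over the singularities of the quotient $Z$ or of a birational model where $\mathcal F$ becomes the relative tangent sheaf. Invoking abundance for $Z$ would then require knowing $Z$ has log Calabi--Yau type singularities, which is not available. The paper avoids all of this with a much more elementary observation: since $h^0(X^{sm},\Omega_{X^{sm}}^{\otimes m})=0$ for all $m>0$, a resolution $Y\to X$ satisfies $h^0(Y,\Omega_Y^{\otimes m})=0$, so $\Pic^0_{Y/\mathbb C}=0$ and every numerically trivial Cartier divisor on $Y$ is torsion; by $\mathbb Q$-factoriality of $X$ the same holds for numerically trivial Weil divisors on $X$. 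Given a torsion-free quotient $T_X\to\mathcal Q$ with $c_1(\mathcal Q)\equiv 0$ (the torsion case reduces to this by the pseudo-effectivity of $c_1$ of a torsion sheaf), choose $m$ with $m\,c_1(\mathcal Q)\sim 0$; then the composite surjection
\begin{equation*}
T_{X^{sm}}^{\otimes m\,\rk\mathcal Q}\to \mathcal Q|_{X^{sm}}^{\otimes m\,\rk\mathcal Q}\to \left(\det\mathcal Q|_{X^{sm}}\right)^{\otimes m}\cong\mathcal O_{X^{sm}}
\end{equation*}
dualizes to a nonzero section of $\Omega_{X^{sm}}^{\otimes m\,\rk\mathcal Q}$, directly contradicting $(3)$. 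You should replace your $(3)\Rightarrow(1)$ with an argument of this shape; abundance is neither available nor needed here.
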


\begin{proof}
(1) $\implies$ (2): suppose that $(X,\Delta)$ is a $\mathbb{Q}$-factorial lc pair with $-(K_{X} + \Delta)$ nef such that $T_{X}$ has no positive rank quotient with numerically trivial first Chern class.  By Theorem \ref{theo:ou} $T_{X}$ satisfies the PQP.  Then Proposition \ref{prop:lccurveexists} and Theorem \ref{theo:positivetofree} combine to prove the existence of $r$-free curves for every $r \geq 0$.

(2) $\implies$ (3):  suppose that $X$ carries a $1$-free curve.  Then through any general point $x \in X^{sm}$ there is a curve $s: C \to X^{sm}$ such that $s^{*}T_{X^{sm}}$ is ample.  Thus every section of the locally free sheaf $\Omega_{X^{sm}}^{\otimes m}$ must vanish at $x$.  Since this is true for a dense set of points $x \in X^{sm}$, we conclude that the space of global sections is $0$.

(3) $\implies$ (1): Let $\phi: Y \to X$ be a resolution of singularities.  Then we have $h^{0}(Y, \Omega_{Y}^{\otimes m}) = 0$ for every $m > 0$ as well.  In particular $\Pic^{0}_{Y/\mathbb{C}} = 0$, so every numerically trivial Cartier divisor on $Y$ is torsion.  By appealing to $\mathbb{Q}$-factoriality and pulling back divisors from $X$ to $Y$, we see the same is true of numerically trivial Weil divisors on $X$.

Suppose we had a surjection $T_{X} \to \mathcal{Q}$ where $c_{1}(\mathcal{Q})$ was numerically trivial.  Since a torsion sheaf always has pseudo-effective first Chern class, we may suppose $\mathcal{Q}$ is torsion-free.  Then for any $m > 0$ we have a chain of surjections on the smooth locus $X^{sm}$
\begin{equation*}
T_{X^{sm}}^{\otimes m \rk(\mathcal{Q})} \to \mathcal{Q}|_{X^{sm}}^{\otimes m \rk(\mathcal{Q})} \to \left(\bigwedge\nolimits^{\rk(\mathcal{Q})} \mathcal{Q}|_{X^{sm}} \right)^{\otimes m}.
\end{equation*}
In particular, when $m$ satisfies $mc_{1}(\mathcal{Q}) \sim 0$ we obtain a surjection $T_{X^{sm}}^{\otimes m\rk(\mathcal{Q})} \to \mathcal{O}_{X^{sm}}$.  Dually, we get a non-vanishing tensor field on $X^{sm}$, a contradiction.
\end{proof}

\begin{proof}[Proof of Theorem \ref{theo:lccase}:]
Follows immediately from Theorem \ref{theo:lcandantinef}.
\end{proof}

\begin{proof}[Proof of Theorem \ref{theo:maintheoremcharp}:]
Follows immediately from Theorem \ref{theo:positivetofree} and Proposition \ref{prop:lccurveexists}.
\end{proof}

\begin{proof}[Proof of Theorem \ref{theo:dltcase}:]  As discussed in Section \ref{sect:MMP}, there is a $\mathbb{Q}$-factorialization $\phi: X' \to X$ and a divisor $\Delta'$ on $X'$ such that $(X',\Delta')$ is $\mathbb{Q}$-factorial klt log Fano. Theorem \ref{theo:lcandantinef} and Proposition \ref{prop:kltpqp} show that there is a curve $C$ such that for every non-negative $r$ there is an $r$-free morphism $s': C \to X'$.   Since a general deformation of an $r$-free curve $s': C \to X'$ will miss the codimension $\geq 2$ sublocus of $X'$ where $\phi$ is not an isomorphism, by taking images on $X$ we obtain the desired curves.
\end{proof}

\subsection{Representing nef classes by free curves} \label{sect:variants}

In this section we prove two variants of Theorem \ref{theo:dltcase} which give more details about which numerical classes are represented by free curves.  We first discuss the situation for pairs which are not necessarily Fano.

\begin{proposition} \label{prop:almostfree}
Let $(X,\Delta)$ be a dlt pair over an algebraically closed field of characteristic $0$.  Suppose that $\mathcal{F}$ is a $(K_{X} + \Delta)$-negative extremal face of the cone $\Nef_{1}(X) + \Eff_{1}(X)_{K_{X} + \Delta \geq 0}$.  Then there is some curve $C$ such that for any $r \geq 0$ there is an almost $r$-free curve $s: C \to X^{sm}$ whose class is in $\mathcal{F}$.
\end{proposition}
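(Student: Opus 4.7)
The strategy is to contract the face $\mathcal{F}$ via an MMP to reduce to a Mori fiber space situation, and then apply Theorem \ref{theo:dltcase} to a general fiber.

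First I would run the $(K_X + \Delta)$-MMP for the dlt pair $(X,\Delta)$, with scaling chosen so that the output contracts the face $\mathcal{F}$. By \cite{BCHM10} and its extensions to dlt pairs, this produces a birational map $\phi : X \dashrightarrow Y$ which factors as a sequence of divisorial contractions and flips (hence is an isomorphism in codimension $1$), followed by a Mori fiber space $\pi : Y \to Z$. After passing to a $\mathbb{Q}$-factorialization as in Section \ref{sect:MMP} if necessary, I may assume $Y$ is $\mathbb{Q}$-factorial and $(Y,\Delta_Y)$ is dlt with $-(K_Y + \Delta_Y)$ being $\pi$-ample. The face $\mathcal{F}$ then corresponds, under $\phi$, to the relative Mori cone of $\pi$, whose rays are represented by general fibers.

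Next, the pair $(F,\Delta_Y|_F)$ on a general fiber $F$ of $\pi$ is a $\mathbb{Q}$-factorial dlt log Fano pair. Applying Theorem \ref{theo:dltcase} yields a smooth projective curve $C$ such that for every $r \geq 0$ there is an $r$-free morphism $s_0 : C \to F^{sm}$. Since $H^1(C, s_0^{*} T_F) = 0$, the relative version of \cite[I.2.17 Theorem]{Kollar} lets $s_0$ deform sideways along $\pi$: for every fiber $F'$ over some open dense $Z^\circ \subset Z$ there is an $r$-free curve $C \to (F')^{sm}$. Taking $U \subset Y$ to be an open subset of the smooth locus of $Y$ over which $\pi$ is smooth, these curves are almost $r$-free on $Y$ in the sense of Definition \ref{defi:almostfree}.

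Finally, I transfer the constructed curves back to $X$ through $\phi^{-1}$. Since $\phi$ is an isomorphism in codimension $1$, the indeterminacy locus of $\phi^{-1}$ has codimension at least $2$ in $Y$; by Proposition \ref{prop:freeproperties}(1) I can deform the $r$-free curve within its fiber $F'$ to avoid this locus, together with the singular locus of $X$. The resulting map $s : C \to X^{sm}$ is almost $r$-free with respect to the smooth morphism obtained by descending $\pi|_U$ through the inverse isomorphism, and its numerical class lies in $\mathcal{F}$ by the identification of $\mathcal{F}$ with the fiber classes of $\pi$. The main obstacle is making precise the cone-theoretic statement that a $(K_X+\Delta)$-negative extremal face of the nonstandard cone $\Nef_1(X) + \Eff_1(X)_{K_X + \Delta \geq 0}$ can actually be contracted by the MMP; once $\mathcal{F}$ is identified with a genuinely contractible extremal face of $\Eff_1(X)$, the rest of the argument is essentially formal.
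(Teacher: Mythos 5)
Your overall strategy — reduce to a fiber space via an MMP and apply Theorem \ref{theo:dltcase} to a general fiber, then transfer back by avoiding the indeterminacy locus — is exactly the shape of the paper's proof. However, the step you flag at the end as "the main obstacle" is indeed a genuine gap, and it is precisely where the paper's argument does its real work. You propose to "run the $(K_X+\Delta)$-MMP with scaling chosen so that the output contracts the face $\mathcal{F}$," but there is no a priori reason that the $(K_X+\Delta)$-MMP can be steered to contract a given $(K_X+\Delta)$-negative extremal face of the cone $\Nef_{1}(X) + \Eff_{1}(X)_{K_{X}+\Delta\geq 0}$; this cone lives in the space of nef curve classes, not the Mori cone of effective curves, and its faces need not correspond to faces of $\overline{\mathrm{NE}}(X)$ that an MMP naturally contracts. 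The paper resolves this by first $\mathbb{Q}$-factorializing $X$ and perturbing $\Delta$ to make $(X,\Delta)$ klt, then selecting a pseudo-effective $\mathbb{Q}$-divisor $D$ such that $\mathcal{F}$ is exactly the nef curve classes with $D\cdot\alpha=0$, and invoking \cite[Lemma 4.3]{Lehmann12} to write $D = K_X + \Delta + A$ with $A$ ample and $(X,\Delta+A)$ klt. One then runs the $D$-MMP with scaling of $A$ (not the $(K_X+\Delta)$-MMP) to make $\psi_*D$ nef, and applies the basepoint free theorem to obtain the fiber space $\pi\colon X'\to Z$ contracting exactly the curves with $\psi_*D\cdot\gamma=0$; the numerical class lands in $\mathcal{F}$ because it has vanishing intersection with $D$. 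Without this specific construction of $D$, the link between $\mathcal{F}$ and a contractible face is unavailable.

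A further, more minor, slip: a sequence of divisorial contractions and flips is \emph{not} an isomorphism in codimension $1$ (divisorial contractions destroy divisors). What you actually need, and what is true, is that $\psi\colon X\dashrightarrow X'$ is a birational contraction, so the indeterminacy locus of $\psi^{-1}$ has codimension $\geq 2$ in $X'$; this is enough for the free curves on the fiber $F$ to be deformed off that locus. Your sideways-deformation step and the use of Definition \ref{defi:almostfree} are correct and match the paper.
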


In particular, if $(X,\Delta)$ is a dlt pair such that $K_{X} + \Delta$ fails to be pseudo-effective then there is an almost $r$-free curve $s: C \to X^{sm}$. 

\begin{proof}
Let $\phi: X' \to X$ be a $\mathbb{Q}$-factorialization of $(X,\Delta)$ and let $\Delta'$ be the strict transform of $\Delta$.  There is an extremal face $\mathcal{F}'$ of the analogous cone for $(X',\Delta')$ such that $\phi_{*}\mathcal{F}' = \mathcal{F}$.  Since a general deformation of an almost free curve will avoid the codimension $2$ locus where $\phi$ is not an isomorphism, the desired statement for $X$ follows from the statement for $X'$.  Thus we may assume $(X,\Delta)$ is $\mathbb{Q}$-factorial.  After perturbing $\Delta$ we may also suppose that $(X,\Delta)$ is klt.

Let $D$ be a pseudo-effective $\mathbb{Q}$-divisor such that the nef curve classes with vanishing intersection against $D$ are exactly the classes in the face $\mathcal{F}$.  \cite[Lemma 4.3]{Lehmann12} shows that (after rescaling) we may write $D = K_{X} + \Delta + A$ for some ample $\mathbb{Q}$-divisor $A$ such that $(X,\Delta + A)$ is klt.  By \cite[Theorem C]{BCHM10} we can run a $D$-MMP with scaling of $A$ to obtain a birational contraction $\psi: X \dashrightarrow X'$ such that $\psi_{*}D$ is nef.  We can write $\psi_{*}D \sim_{\mathbb{Q}} K_{X'} + \psi_{*}\Delta + (1-\epsilon)\psi_{*}A + \epsilon E + H$ for some choice of $\epsilon > 0$, effective $\mathbb{Q}$-divisor $E$ on $X'$, and ample $\mathbb{Q}$-divisor $H$ on $X'$ such that $(X', \psi_{*}\Delta + (1-\epsilon)\psi_{*}A + \epsilon E)$ is klt.  Thus we may apply the basepoint free theorem (see \cite[Theorem 3.3]{KM98}) to find a fiber space $\pi: X' \to Z$ which contracts precisely the curves with vanishing intersection against $\psi_{*}D$.  Note that for a general fiber $F$ of $\pi$ the pair $(F,(\psi_{*}\Delta + (1-\epsilon)\psi_{*}A + \epsilon E)|_{F})$ will be klt Fano.  In particular, by Theorem \ref{theo:dltcase} there is a curve $C$ such that for any $r \geq 0$ we have an $r$-free curve $s: C \to F$.  By freeness, a general deformation of $s$ will avoid the indeterminacy locus of $\psi^{-1}$.  Thus the strict transform of these maps on $X$ will be almost free curves.  Since they have vanishing intersection against $D$, their numerical classes lie in $\mathcal{F}$.
\end{proof}

Our next result addresses the homological classes of free curves.  Let $X$ be a projective variety and consider the set of cycles on $X$ with a fixed codimension $>1$.  It is frequently a delicate question whether or not a given homological class is represented by an effective cycle, and even harder to determine whether it is represented by a ``positive'' cycle.  We prove a strong result in this direction for curves on dlt Fano pairs:

\begin{theorem} \label{theo:kltallclasses}
Let $(X,\Delta)$ be a dlt Fano pair over an algebraically closed field of characteristic $0$.  Every rational ray in the interior of $\Nef^{n-1}(X)$ is represented by a $1$-free curve.
\end{theorem}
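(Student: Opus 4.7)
The plan is to reduce to the $\mathbb{Q}$-factorial klt Fano case and then realize each rational interior ray via gluing and smoothing. Via a $\mathbb{Q}$-factorialization $\phi: X' \to X$ as in Section \ref{sect:MMP}, we may assume $(X,\Delta)$ is $\mathbb{Q}$-factorial klt Fano: Lemma \ref{lemm:nonqfactorialcone}(2) takes rational interior rays of $\Nef^{n-1}(X)$ to rational interior rays of $\Nef^{n-1}(X')=\Nef_1(X')$, while Proposition \ref{prop:freeproperties}(1) lets a general deformation of a $1$-free curve on $X'$ avoid the codimension $\geq 2$ exceptional locus of $\phi$ and descend to a $1$-free curve on $X$ of the same class in $N^{n-1}$. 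Fix a rational ray $\alpha$ in the interior of $\Nef_1(X)=\Nef^{n-1}(X)$.

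By Theorem \ref{theo:dltcase}, there is a smooth projective curve $B$ and, for every $r\geq 0$, an $r$-free morphism $s_0: B \to X^{sm}$; set $\gamma_0=[s_{0*}B]$. Since $X$ is a Mori Dream Space, $\Eff_1(X)$ is rational polyhedral and each rational class in its interior becomes an integral effective class after a positive integer rescaling. Choose positive integers $N$ and $c$ with $N$ large and $c$ small relative to $N$, so that $N\alpha$ is integral and $N\alpha - c\gamma_0$ remains in the interior of $\Nef_1(X)\subset \Eff_1(X)$; after a further rescaling if necessary, write $N\alpha - c\gamma_0 = [\Gamma]$ for an effective integral $1$-cycle $\Gamma$ on $X$. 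Using Proposition \ref{prop:freeproperties}(2), take $c$ general deformations $s_0^{(1)},\ldots,s_0^{(c)}$ of $s_0$ each passing through a distinct general smooth point $p_i \in \Gamma\cap X^{sm}$ (a dense open subset of $\Gamma$, since $\codim(X^{sing})\geq 2$). Gluing at the $p_i$ produces a nodal comb $\tilde s: \tilde C \to X$ of class $N\alpha$, with handle the normalization of $\Gamma$ and teeth the $s_0^{(i)}$.

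The final step is a standard comb-smoothing argument in the style of \cite[II.7]{Kollar}: once $r$ is taken sufficiently large, the $r$-free teeth contribute enough positivity that the obstructions in $H^1(\tilde C, \tilde s^* T_X)$ are killed, so $\tilde s$ deforms to a smooth morphism $s: C \to X$ of class $N\alpha$. A general deformation has image in $X^{sm}$, since the high-dimensional deformation space contributed by the teeth can be used to perturb the handle away from $X^{sing}$; and upper semicontinuity of slopes in families, combined with the $r$-freeness of the teeth (with $r$ chosen large enough to dominate the genus increase of $C$ under smoothing), forces $\mu^{min}(s^*T_X) \geq 2g(C) + 1$, so that $s$ is a $1$-free curve representing $\alpha$.

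The main obstacle is this comb-smoothing step: one must verify both that the smoothed curve lands in $X^{sm}$ (rather than inheriting the intersections of $\Gamma$ with $X^{sing}$) and that the $r$-freeness of the teeth converts, after smoothing and the accompanying genus growth, into the $1$-free slope bound on the smoothed curve. Both points are controllable since $c$ and $r$ may be taken arbitrarily large, providing ample positivity to push through the smoothing, but the careful bookkeeping of the normal bundle of the comb in the presence of the singular locus of $X$ is where most of the work will lie.
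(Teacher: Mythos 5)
Your reduction to the $\mathbb{Q}$-factorial klt Fano case via Lemma \ref{lemm:nonqfactorialcone} and Proposition \ref{prop:freeproperties}(1) is correct and matches the paper. However, the construction after that point has a genuine gap, and it differs substantially from the paper's argument.

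The key difficulty you identify but do not resolve is exactly what breaks the approach. You write $N\alpha - c\gamma_0 = [\Gamma]$ with $\Gamma$ an \emph{arbitrary} effective integral $1$-cycle and then assemble a comb with handle $\Gamma$. But there is no control over $\Gamma$: its components may pass through $X^{\mathrm{sing}}$ and may have large genus and highly negative normal sheaf. The usual Koll\'ar-style comb smoothing (\cite[II.7]{Kollar}) requires $T_X$ to be a vector bundle along the handle, and even when it is, the positivity needed to kill $H^1$ along the handle must beat both the genus of $\Gamma$ and the negativity of $T_X|_\Gamma$. Neither of these is controllable here: increasing the number $c$ of teeth forces $N$ to increase (so that $N\alpha - c\gamma_0$ stays effective), which in turn increases the degree and hence potentially the genus and negativity of $\Gamma$. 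Moreover, even if the comb does smooth, there is no mechanism forcing the smoothed curve off $X^{\mathrm{sing}}$ if $\Gamma$ meets it. "The careful bookkeeping \ldots is where most of the work will lie" is an honest flag, but that bookkeeping is not a routine check -- it is precisely where the argument fails as stated.

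The paper avoids this entirely by never introducing an uncontrolled effective cycle. Instead, it first proves Proposition \ref{prop:positivedense}: on a $\mathbb{Q}$-factorial klt Fano, rays represented by curves $s: C \to X^{\mathrm{sm}}$ with $\mu^{\min}(s^*T_X) > 0$ are \emph{dense} in $\Nef_1(X)$. This is the nontrivial input (it uses the Mori Dream Space structure, small $\mathbb{Q}$-factorial modifications, Flenner's theorem on restrictions to complete intersections, and the description of $\Nef_1(X)$ via cones $\overline{N_\phi}$ from \cite{LX19}). Combined with Theorem \ref{theo:positivetofree}, one gets a dense set of rays of $1$-free curves in the interior. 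Then for a given rational interior ray $R$, one picks a simplicial subcone of the interior containing $R$, uses density to find finitely many $1$-free curves $\{C_j\}$ whose classes generate a cone containing $R$ in its interior, writes a positive integer combination on $R$, and glues deformations of the $C_j$. Because \emph{every} component of the glued curve is $1$-free (hence lies in $X^{\mathrm{sm}}$ with ample restricted tangent bundle), $H^1$ of the comb vanishes automatically and the smoothing both exists and stays in $X^{\mathrm{sm}}$. The density statement is the step your proof is missing; without it there is no way to express $\alpha$ as a positive combination of classes of $1$-free curves alone.
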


Before proving Theorem \ref{theo:kltallclasses}, we need a more precise version of Proposition \ref{prop:lccurveexists} for dlt varieties. 

\begin{proposition} \label{prop:positivedense}
Let $(X,\Delta)$ be a $\mathbb{Q}$-factorial dlt log Fano pair over an algebraically closed field of characteristic $0$.  There is a dense set of rays in $\Nef_{1}(X)$ which are represented by curves $s: C \to X$ whose images are contained in the smooth locus of $X$ and which satisfy $\mu^{min}(s^{*}T_{X}) > 0$.
\end{proposition}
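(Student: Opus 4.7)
The plan is to construct positive curves in $X^{sm}$ as general complete intersections with respect to a dense family of ample polarizations, and then use the positive quotient property (PQP) combined with Flenner's restriction theorem to ensure the required positivity.

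First, using the perturbation results recalled in Section \ref{sect:MMP}, I can replace $\Delta$ by a slightly larger divisor $\Delta'$ so that $(X,\Delta')$ is $\mathbb{Q}$-factorial klt log Fano. Proposition \ref{prop:kltpqp} then gives that $T_X$ has the positive quotient property, i.e.\ $\mu^{min}_{\alpha}(T_X) > 0$ for every $\alpha$ in the interior of $\Nef_1(X)$.

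Next, for each ample $\mathbb{Q}$-Cartier divisor $H$, the curve class $H^{n-1}$ lies in the interior of $\Nef_1(X)$ (since $H$ is ample, $H^{n-1} \cdot D > 0$ for every nonzero pseudo-effective divisor class $D$), so $\mu^{min}_{H^{n-1}}(T_X) > 0$. Applying Flenner's restriction theorem in characteristic zero exactly as in the proof of Proposition \ref{prop:lccurveexists}, a general complete intersection $C = D_1 \cap \dots \cap D_{n-1}$ with $D_i \in |mH|$ for sufficiently divisible $m$ satisfies $\mu^{min}(s^*T_X) > 0$. Since $X^{sing}$ has codimension at least $2$, an iterated Bertini and dimension-count argument applied to the linear system $|mH|$ shows that a generic such $C$ is disjoint from $X^{sing}$, so $C \subset X^{sm}$. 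The class $[C] \in N_1(X)$ is proportional to $H^{n-1}$.

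To conclude, I must show that the rays $[H^{n-1}]$, as $H$ ranges over ample $\mathbb{Q}$-Cartier divisors, form a dense subset of $\Nef_1(X)$. The map $H \mapsto H^{n-1}$ is a polynomial map of degree $n-1$ from $N^1(X)_{\mathbb{R}}$ to $N_{n-1}(X)_{\mathbb{R}} \cong N_1(X)_{\mathbb{R}}$, where the identification uses $\mathbb{Q}$-factoriality. Its derivative at an ample class $H_0$ is the linear map $v \mapsto (n-1) H_0^{n-2} \cdot v$, and Hard Lefschetz (applied after passing to a resolution) shows this is an isomorphism. Hence the image of the ample cone is open in $N_1(X)_{\mathbb{R}}$, which yields the desired density of rays. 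The main anticipated obstacle is this last step: justifying the Hard Lefschetz input rigorously on a singular $\mathbb{Q}$-factorial $X$. A robust alternative, if this direct approach is problematic, is to allow multi-polarized complete intersections with classes $H_1 \cdot H_2 \cdots H_{n-1}$ and use a multi-polarized version of the restriction theorem; this enlarges the family of accessible classes and makes the density claim essentially transparent.
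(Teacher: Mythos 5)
The first part of your argument (perturbing to klt, applying Proposition~\ref{prop:kltpqp}, then Flenner's restriction theorem and Bertini to construct a positive complete intersection curve in $X^{sm}$ with class proportional to $H^{n-1}$) matches the paper and is fine. The problem is the density step: it is simply not true that the classes $H^{n-1}$, for $H$ ample on $X$, accumulate to all of $\Nef_1(X)$. By BDPP, $\Nef_1(X)$ is the closed movable cone of curves, which is generated by strict transforms of complete intersection curves from \emph{all} birational models, not just $X$ itself. For a $\mathbb{Q}$-factorial klt Fano pair the relevant models are the small $\mathbb{Q}$-factorial modifications (SQMs), and $\Nef_1(X)$ decomposes into Mori chambers $\overline{N_\phi}$, one for each SQM $\phi: X \dashrightarrow X'$; your classes $H^{n-1}$ only populate the single chamber $\overline{N_{\mathrm{id}}}$. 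Since klt Fano varieties generally admit nontrivial flops, $\overline{N_{\mathrm{id}}} \subsetneq \Nef_1(X)$ is the typical situation, so the Hard Lefschetz argument (even granting it) only yields openness of the image, not density in $\Nef_1(X)$. Your proposed fallback of allowing multi-polarized classes $H_1\cdots H_{n-1}$ does not escape this problem either: any such class with all $H_i$ nef on $X$ still lies in $\overline{N_{\mathrm{id}}}$.

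The paper's proof handles this by invoking that a $\mathbb{Q}$-factorial klt Fano is a Mori Dream Space, constructing the complete intersection curve on each SQM $X'$, transporting positivity back to $X$ via \cite[Corollary A.22]{GKP14} (the compatibility of slopes for nef curve classes under small birational maps), and then citing \cite[Example 5.4]{LX19} for the chamber decomposition $\Nef_1(X) = \cup_\phi \overline{N_\phi}$. This reference replaces the Hard Lefschetz concern you raised, but more importantly it is what makes the density claim true at all. To repair your proof you would need to incorporate exactly this SQM machinery; the single-chamber argument cannot be made to work by local analysis of the map $H \mapsto H^{n-1}$ alone.
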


\begin{proof}
\cite[Corollary 1.3.1]{BCHM10} proves that $X$ is a Mori Dream Space.  Suppose that $\phi: X \dashrightarrow X'$ is a small $\mathbb{Q}$-factorial modification.  Let $A$ be a very ample divisor on $X'$.  \cite[1.2 Theorem]{Flenner84} implies that for some sufficiently large $m$ we can find a curve $C'$ which is a general complete intersection of elements in $|mA|$ and satisfies
\begin{equation*}
\mu^{min}(T_{X'}|_{C'}) = \mu^{min}_{C'}(T_{X'}).
\end{equation*}
Since $C'$ is a general complete intersection, we may ensure that it is contained in the locus where $\phi^{-1}$ is an isomorphism.  Let $C$ denote the strict transform curve on $X$.  By \cite[Corollary A.22]{GKP14} we see that $\mu^{min}_{C'}(T_{X'}) = \mu^{min}_{C}(T_{X})$.  Combining the equalities above with Proposition \ref{prop:kltpqp}, we have
\begin{equation*}
\mu^{min}(T_{X}|_{C}) = \mu^{min}(T_{X'}|_{C'}) = \mu^{min}_{C'}(T_{X'}) = \mu^{min}_{C}(T_{X}) > 0
\end{equation*}
showing that $C$ has the desired properties. 

It only remains to show that the rays spanned by classes of the above form are dense in $\Nef_{1}(X)$.  Given any small $\mathbb{Q}$-factorial modification $\phi: X \dashrightarrow X'$, let $N_{\phi}$ denote the strict transform of the classes of the form $A^{n-1}$ on $X'$ where $A$ is an ample $\mathbb{Q}$-Cartier divisor.  \cite[Example 5.4]{LX19} shows that $\Nef_{1}(X) = \cup_{\phi} \overline{N_{\phi}}$ and this concludes the proof.
\end{proof}

We now can apply this to representing rays of the nef cone by free curves.
\begin{proof}[Proof of Theorem \ref{theo:kltallclasses}:]
As discussed in Section \ref{sect:MMP}, there is a $\mathbb{Q}$-factorialization $\phi: X' \to X$ and a divisor $\Delta'$ on $X'$ such that $(X',\Delta')$ is $\mathbb{Q}$-factorial klt log Fano.  By Lemma \ref{lemm:nonqfactorialcone} the induced injective map $\phi^{*}: \Nef^{n-1}(X) \to \Nef^{n-1}(X')$  preserves the property of being a boundary or interior class.  Thus it suffices to prove the statement when $(X,\Delta)$ is a $\mathbb{Q}$-factorial klt log Fano pair.  

By combining Proposition \ref{prop:positivedense} and Theorem \ref{theo:positivetofree}, we see there is a dense set of rays $\{ R_{i} \}$ in the interior of $\Nef_{1}(X)$ which are represented by classes of $1$-free curves.  For any rational ray $R$ in the interior of the nef cone, we can choose a simplicial cone in the interior of $\Nef_{1}(X)$ containing it.  By choosing sufficiently small open neighborhoods of the generators of this simplicial cone and appealing to density, we can find a finite set of $1$-free curves $\{ C_{j} \}_{j=1}^{r}$ such that the cone generated by their numerical classes contains $R$ in its interior.  In particular, there is a sum of the $C_{j}$ with positive integer coefficients which has numerical class lying on $R$.  
By choosing appropriate deformations of the $C_{j}$, we can glue them to obtain a stable map $s': C' \to X$ such that the restriction of $s'$ to each component is $1$-free.   Since $H^{1}(C',s'^{*}T_{X}) = 0$, this curve can be smoothed to obtain a $1$-free curve.
\end{proof}

\begin{remark}
With a little additional effort along the lines of the proof of Proposition \ref{prop:almostfree}, one can show that every rational ray on the boundary of the nef cone is represented by an almost free curve.
\end{remark}

\section{Rational curves on terminal Fano threefolds}

In this section we prove the existence of free rational curves in the smooth locus of a terminal Fano threefold.  Throughout we work over an algebraically closed field of characteristic $0$.
Starting with a family of free (higher genus) curves in the smooth locus of $X$, we carefully study how such curves can break.  This allows us to construct a dominant family of rational curves which meet the singular locus no more than twice.  Finally, an application of \cite[Corollary 5.9]{KM99} yields a free rational curve.

\begin{remark}
When $X$ is a terminal Fano variety with LCI singularities, there is a well-known direct proof of the existence of free rational curves in the smooth locus.  Suppose $M \subset \Mor(\mathbb{P}^{1},X)$ is an irreducible component parametrizing a dominant family of rational curves $s: \mathbb{P}^{1} \to X$.  Let $\phi: Y \to X$ be a resolution of singularities and let $s': \mathbb{P}^{1} \to Y$ denote the strict transform of a general $s$.  By \cite[II.1.3 Theorem]{Kollar}
\begin{equation*}
-K_{X} \cdot s_{*}\mathbb{P}^{1} + \dim X \leq \dim M = -K_{Y} \cdot s'_{*}\mathbb{P}^{1} + \dim Y.
\end{equation*}
Thus $K_{Y/X} \cdot C' \leq 0$.  Since $K_{Y/X}$ is effective and $C'$ is general in a dominant family, we conclude that equality holds, i.e.~$\phi(C')$ avoids the singular locus of $X$.
\end{remark}

We start with a couple helpful lemmas.  The first is a basic observation about families of curves which are generically disjoint from an effective divisor $E$.

\begin{lemma} \label{lemm:localint}
Let $X'$ be a smooth projective variety and let $E$ be an effective divisor on $X'$.  Suppose $B $  is a one-parameter family of stable maps in $\overline{\mathcal{M}}_{g,n}(X',\beta)$ whose general member is disjoint from $E$.

Let $s: C \to X'$ be a morphism parametrized by $B$ and suppose that there is an irreducible component $T \subset C$ and a point $q \in T$ such that $s(q) \in \Supp(E)$ but $s(T) \not \subset \Supp(E)$.  Then $C$ must have an irreducible component $R$ that meets $T$ at $q$ such that $s(R) \subset \Supp(E)$.
\end{lemma}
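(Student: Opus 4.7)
The plan is to analyze the total space of the one-parameter family: since the general fiber avoids $E$, the divisor $f^{-1}(E)$ on the total family must be ``vertical,'' and this vertical piece will supply the required component $R$.

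Concretely, let $\pi: \mathcal{C} \to B$ denote the total family obtained by pulling back the universal curve over $\overline{\mathcal{M}}_{g,n}(X',\beta)$, and let $f: \mathcal{C} \to X'$ be the evaluation morphism, so that $C$ is the fiber of $\pi$ over the parameter point of $s$ and $f|_C = s$. Let $\mathcal{C}_0$ denote the irreducible component of $\mathcal{C}$ containing $T$. Since $f(T) = s(T) \not\subset \Supp(E)$, the restriction of $f$ to $\mathcal{C}_0$ does not factor through $E$. Choosing a local equation $e$ of $E$ near $s(q)$, the pullback $f^{*}e$ is then a non-zero function in the local ring of $\mathcal{C}_0$ at $q$, and its zero locus $D_0$ is a non-trivial Cartier divisor on $\mathcal{C}_0$ passing through $q$ (using $s(q) \in \Supp(E)$).

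Next I would use the hypothesis that a general member of $B$ is disjoint from $E$: it implies that no irreducible component of $D_0$ dominates $B$ under $\pi$, since otherwise the general fiber of $\pi$ would meet $D_0$ and hence $E$. Consequently every irreducible component of $D_0$ is contracted by $\pi$ and so is contained in a single fiber. Pick any irreducible component $R$ of $D_0$ through $q$; then $R$ is a one-dimensional irreducible closed subvariety of the reduced nodal fiber $C$, hence an irreducible component of $C$. By construction $f(R) \subset \Supp(E)$, which forces $R \neq T$ since $f(T) \not\subset \Supp(E)$, while $q \in R \cap T$ by construction. This produces the desired component.

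The one step requiring genuine care is the initial passage to the component $\mathcal{C}_0$: it is essential that $f^{*}e$ not be identically zero on the ambient component where we work, and this is guaranteed precisely by restricting to the component containing $T$ and using the hypothesis on $s(T)$. The remainder of the argument is a direct consequence of the fact that effective Cartier divisors on a surface have pure codimension one, combined with the verticality forced by the general-fiber hypothesis.
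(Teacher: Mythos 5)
Your proof is correct and follows essentially the same route as the paper's: pull $E$ back to the total space of the family, observe that the hypothesis on the general member forces the preimage to be vertical, and extract a vertical component through $q$ which is then automatically an irreducible component of the fiber $C$. Your restriction to the component $\mathcal{C}_0$ containing $T$ is a slightly more careful rendering of the same idea (it explains why the pullback of a local equation of $E$ is a nonzero function, hence a genuine codimension-one zero locus), whereas the paper phrases this more tersely in terms of the divisor $\mathfrak{s}^{*}E$ on the whole total space.
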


\begin{proof}
After perhaps replacing $B$ by a cover, we let $\varphi: \mathcal{C} \to B$ denote the universal family equipped with the evaluation map $\mathfrak{s}: \mathcal{C} \to X'$.  Note that every irreducible component of the divisor $\mathfrak{s}^{*}E$ is $\varphi$-vertical.  Since $T$ meets $\mathfrak{s}^{*}E$ at $q$ but is not contained in it, there must be an irreducible component $R$ of $\mathfrak{s}^{*}E$ meeting $T$ at $q$.  Since every irreducible component of $\mathfrak{s}^{*}E$ is $\varphi$-vertical, $R$ must be an irreducible component of $C$.
\end{proof}

The next lemma will allow us to compare intersection numbers on our terminal threefold $X$ and on a resolution of singularities $\phi: X' \to X$.  In practice, we will apply it when $T$ is a free rational curve on $X'$.

\begin{lemma} \label{lem:negativeKX'degree}
Let $\phi : X' \to X$ be a resolution of singularities of a Fano variety with canonical singularities.  Suppose $B$ is a one-parameter family of stable maps in $\overline{\mathcal{M}}_{g,n}(X',\beta)$ whose general member is disjoint from the $\phi$-exceptional locus.  Let $s : C_0 \to X'$ be any stable map parametrized by $B$.  Suppose $S \subset C_0$ is a connected subcurve such that
\begin{enumerate}
\item $s$ does not contract $S$ to a point in $X'$, and
\item for any irreducible component $T \subset \overline{C_0\setminus S}$ meeting $S$, $K_{X'} \cdot s_*T \leq 0$ and $s(T)$ is not contained in the exceptional locus of $\phi$.  
\end{enumerate}
Then $(\phi^{*}K_{X} - K_{X'}) \cdot  s_{*}S \geq 0$.  Moreover, $-K_{X'} \cdot s_{*}S \geq 1$ if $\phi \circ s$ does not contract $S$ or if $s(S \cap \overline{C_0 \setminus S})$ meets a $\phi$-exceptional divisor which has positive discrepancy.
\end{lemma}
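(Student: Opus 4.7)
The plan is to exploit the canonical singularities hypothesis together with numerical invariance along $B$. Since $X$ has canonical singularities and $\phi : X' \to X$ is a resolution, I would write $K_{X'} = \phi^{*} K_X + E$ for an effective $\phi$-exceptional $\mathbb{Q}$-divisor $E$; the first claim then becomes equivalent to $E \cdot s_{*} S \leq 0$.

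Since the general member of $B$ is disjoint from the exceptional locus and hence from $\Supp(E)$, we have $E \cdot \beta = 0$, and by numerical invariance along the family, also $E \cdot s_{*} C_0 = 0$. Decomposing $s_{*} C_0 = s_{*} S + \sum_T s_{*} T$ with $T$ running over the irreducible components of $\overline{C_0 \setminus S}$, this gives $-E \cdot s_{*} S = \sum_T E \cdot s_{*} T$, so it suffices to show the right-hand side is non-negative. For each component $T$ meeting $S$, hypothesis (2) yields $s(T) \not\subset \Supp(E)$, so the proper intersection satisfies $E \cdot s_{*} T = \deg(s^{*}\mathcal{O}(E)|_T) \geq 0$. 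For components deeper in the dual tree, I would argue by induction: within each connected component $\Sigma$ of $\overline{C_0 \setminus S}$, starting from the root component meeting $S$ (whose image avoids $\Supp(E)$), apply Lemma~\ref{lemm:localint} at each internal node to propagate outward the non-negativity of $E \cdot s_{*} \Sigma$. Summing over the $\Sigma$ proves the first claim.

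For the ``Moreover'' statement, I would rewrite
\[
-K_{X'} \cdot s_{*} S \;=\; -K_X \cdot (\phi \circ s)_{*} S \;+\; (-E \cdot s_{*} S).
\]
Both summands are non-negative: the first because $-K_X$ is ample and $(\phi \circ s)_{*}S$ is effective, the second by the first part. Since $X'$ is smooth, $K_{X'}$ is Cartier, so $-K_{X'} \cdot s_{*} S$ is an integer and any strict positivity yields $\geq 1$. In the first case, $\phi \circ s$ does not contract $S$, so $(\phi \circ s)_{*} S \neq 0$ and $-K_X \cdot (\phi \circ s)_{*} S > 0$ by ampleness. In the second case, pick a node $q \in S \cap \overline{C_0 \setminus S}$ with $s(q) \in E_0$ for some $\phi$-exceptional $E_0$ of positive discrepancy $a$, and let $T$ be the component of $\overline{C_0 \setminus S}$ containing $q$. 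By hypothesis (2), $s(T) \not\subset \Supp(E) \supset \Supp(E_0)$, so $s^{*}\mathcal{O}(E_0)|_T$ is an effective divisor on $T$ containing $q$ in its support; hence $E_0 \cdot s_{*} T \geq 1$ and therefore $E \cdot s_{*} T \geq a$, forcing $-E \cdot s_{*} S \geq a > 0$.

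The main obstacle I anticipate is the rigorous treatment of components of $\overline{C_0 \setminus S}$ that do not meet $S$ directly, since hypothesis (2) provides no immediate control over them. The inductive propagation along the dual tree using Lemma~\ref{lemm:localint} is the delicate step and must be carried out carefully to exclude possible negative contributions from components whose image lies inside $\Supp(E)$.
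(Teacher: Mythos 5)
Your high-level plan matches the paper's: write $K_{X'} = \phi^{*}K_X + E$ with $E \geq 0$, reduce the first claim to $E_i \cdot s_{*}S \leq 0$ for each exceptional $E_i$, and for the second claim combine $-K_X$ ampleness with strict negativity of some $E_i \cdot s_{*}S$. However, the core step of your argument does not go through, and you essentially flag this yourself.

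The difficulty is exactly the one you name at the end. You decompose $-E\cdot s_{*}S = \sum_{T' \subset Q} E\cdot s_{*}T'$ where $Q = \overline{C_0\setminus S}$, and you handle the components $T'$ adjacent to $S$ (where hypothesis~(2) gives $s(T')\not\subset\Supp(E)$, hence $E\cdot s_{*}T' \geq 0$). But for a component $T'$ deeper in the tree with $s(T') \subset E_i$, the number $E_i\cdot s_{*}T' = \deg\bigl(s^{*}N_{E_i/X'}|_{T'}\bigr)$ has no a priori sign: exceptional divisors have negative normal bundles along contracted directions, so this can be strictly negative and arbitrarily so. Lemma~\ref{lemm:localint} only tells you that such a component \emph{exists}; it gives no numerical control over its contribution, so there is no ``propagation'' to run. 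The same gap breaks the ``Moreover'' step: $E\cdot s_{*}T \geq a$ for the one component $T$ adjacent to the chosen node does not force $-E\cdot s_{*}S \geq a$, because the remaining summands $E\cdot s_{*}T'$ may be negative.

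The missing ingredient, which is the heart of the paper's proof, is to compute these intersection numbers on the total space $\mathcal{C}$ of the family over $B$ rather than directly on $X'$. There $\mathfrak{s}^{*}E_i$ is a $\varphi$-vertical divisor (since the general fiber avoids $E_i$), every component $R$ of $C_0$ satisfies $R\cdot C_0 = 0$, and hence $R\cdot\overline{R} = 0$ where $\overline{R}$ is $R$ together with its neighbors. Combined with hypothesis~(2), which forces any component $R\subset\mathfrak{s}^{*}E_i|_{C_0}$ meeting $S$ to lie inside $S$, one gets $R\cdot S \leq R\cdot\overline{R} = 0$ term by term; by the projection formula this yields $E_i\cdot s_{*}S = \mathfrak{s}^{*}E_i\cdot S \leq 0$. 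The vanishing self-intersection of the fiber is precisely what controls the negative contributions your threefold-level decomposition cannot see. The same surface computation also gives the strict inequality $\mathfrak{s}^{*}E_i\cdot S < 0$ needed for the ``Moreover'' part, since the component $R\subset S$ produced by Lemma~\ref{lemm:localint} has a neighbor in $Q$, so $\overline{R}\not\subset S$ and hence $R\cdot S < R\cdot\overline{R} = 0$.
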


\begin{proof} 
We denote the $\phi$-exceptional divisors by $\{E_i\}_{i=1}^{q}$.  Let $C_1 = \overline{C_0 \setminus S}$.  After perhaps replacing $B$ by a cover, there is a family of curves $\varphi : \mathcal{C} \rightarrow B$ with a morphism $\mathfrak{s} : \mathcal{C} \to X'$ realizing our family of stable maps where $\mathcal{C}$ is a surface with $\mathbb{Q}$-factorial singularities and every fiber of $\varphi$ is reduced. 

We claim $E_{i} \cdot s_*S \leq 0$ for each $i$.  
Indeed, since the general curve is disjoint from $E_{i}$ the pullback $\mathfrak{s}^{*}E_i$ is $\varphi$-vertical.  Suppose $\mathfrak{s}^{*}E_i|_{C_0}$ contains an irreducible component $R$ of $C_{0}$.  Then $R \cdot C_0 = 0$ because $C_0$ is the class of a $\varphi$-fiber.  In particular, if $\overline{R} \subset C_0$ is the union of $R$ with all components in $C_0$ adjacent to it, then $R \cdot \overline{R} = 0$.  If $S$ meets an irreducible component $R \subset \mathfrak{s}^{*}E_i|_{C_0}$, then $R \subset S$ by assumption (2).  Hence, $R \cdot S \leq R \cdot \overline{R} = 0$, with equality if and only if $S$ contains $\overline{R}$.   As this holds for all $R \subset \mathfrak{s}^{*}E_i|_{C_0}$, we conclude $\mathfrak{s}^*E_i \cdot S \leq 0$.  Since $X$ has canonical singularities, the inequalities $E_{i}  \cdot s_{*}S \leq 0$ for all $i$ together imply the first statement.

To see the last statement, first note that since $K_{X'}$ and $s_{*}S$ are integral classes their intersection must be an integer.  
Thus it suffices to show that either $K_X \cdot (\phi \circ s)_* S < 0$ or $E_{i} \cdot s_*S < 0$ for some exceptional divisor $E_{i}$ on $X'$ with positive discrepancy.    
By assumption (1), the image $s(S)$ has dimension one in $X'$.  Because $X$ is Fano, if $\phi \circ s$ does not contract $S$, $K_X \cdot (\phi \circ s)_* S < 0$.  So we may assume each component of $s(S)$ is contracted by $\phi$.  
By hypothesis, there is an exceptional divisor $E_i$ with positive discrepancy and a point $p \in S \cap C_1$ such that $s(p) \in E_i$.  By Lemma \ref{lemm:localint}, there must be a curve $R \subset S \cap \mathfrak{s}^{-1}E_i$ that contains $p$.  For this component, $\overline{R} \not\subset S$, proving $\mathfrak{s}^*E_i \cdot S \leq R  \cdot S < 0$.
\end{proof}

Given a projective variety $X$ and a rational curve $s: \mathbb{P}^{1} \to X$, we say that $s$ meets the singular locus $k$ times if the complement of $s^{-1}(X^{sm})$ is a union of $k$ closed points of $\mathbb{P}^{1}$.

\begin{theorem} \label{theo:terminaldimn}
Let X be a terminal Fano variety.  There is a dominant family of rational curves on $X$ such that the general curve meets the singular locus of $X$ at most $\dim X - 1$ times.
\end{theorem}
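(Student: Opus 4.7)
The plan is to degenerate the high-genus free curves supplied by Theorem~\ref{theo:dltcase} via iterated Mori-style bend-and-break on a resolution $\phi\colon X' \to X$, then to extract a dominant family of rational curves on $X$ whose intersection with the singular locus is controlled by Lemma~\ref{lem:negativeKX'degree}.

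Let $n = \dim X$. Applying Theorem~\ref{theo:dltcase} to the klt (hence dlt) Fano pair $(X,0)$ yields a smooth projective curve $C$ of some genus $g$ such that for every $r \geq 0$ there is an $r$-free morphism $s\colon C \to X^{sm}$; I would take $r$ very large. By Proposition~\ref{prop:freeproperties}(2), I may require $s$ to send $n-1$ fixed points of $C$ to general points $p_1,\ldots,p_{n-1} \in X^{sm}$ while retaining a positive-dimensional moduli $M$ of such maps. The strict transforms $s'\colon C \to X'$ avoid the $\phi$-exceptional locus. Passing to a general $1$-parameter subfamily of $M$ and compactifying in $\overline{\mathcal{M}}_g(X')$, I obtain a limit stable map $s_\infty\colon C_\infty \to X'$ with reducible domain (by the usual Mori argument that a family fixing enough general points cannot be compact in the smooth-domain locus), and $C_\infty$ contains at least one rational component by bend-and-break.

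Iterating bend-and-break on rational components that still move in families of sufficient dimension, I arrive at a final degeneration whose rational components are rigid through three marked points. The central step is to single out a connected subcurve $S \subset C_\infty$ whose image meets the exceptional locus of $\phi$ and whose complementary components $T$ satisfy both $K_{X'} \cdot s_{\infty *} T \leq 0$ and $s_\infty(T)$ not contained in the exceptional locus, so that Lemma~\ref{lem:negativeKX'degree} applies. That lemma gives $(\phi^{*}K_X - K_{X'}) \cdot s_{\infty *} S \geq 0$, and terminality of $X$ forces each intersection of $s_\infty(S)$ with an exceptional divisor to contribute at least $1$ to this quantity. Combined with the standard Mori bound $-K_{X'} \cdot s_{\infty *} S \leq n$ available for a rational component through a fixed general point, this limits the intersection of $\phi(s_\infty(S))$ with the singular locus of $X$ to at most $n - 1$ points. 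Dominance of the resulting family follows because the degeneration was carried out through only $n - 1 < n$ general points, leaving the images free to sweep out $X$.

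The main obstacle is the combinatorial bookkeeping for the degenerate stable map: choosing the rational subcurve $S$ so that every adjacent component $T$ satisfies both hypotheses of Lemma~\ref{lem:negativeKX'degree}, verifying that the iterated breaking can be arranged to realize the sharp bound $n - 1$ rather than a weaker count, and checking that the resulting family of rational curves on $X$ is genuinely dominant rather than being concentrated along a proper subvariety reached by the exceptional tails.
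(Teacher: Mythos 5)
Your proposal has the right general ingredients (resolution $\phi\colon X'\to X$, degeneration of the high-genus free curves from Theorem~\ref{theo:dltcase}, Lemma~\ref{lem:negativeKX'degree}), but it takes a direct approach where the paper's argument is a proof by contradiction, and the step you flag as ``the main obstacle'' is in fact where the proposal breaks down rather than a bookkeeping detail.

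The central wrong turn is the claim that the Mori bound $-K_{X'}\cdot s_{\infty *}S \leq n$ together with terminality ``limits the intersection of $\phi(s_\infty(S))$ with the singular locus of $X$ to at most $n-1$ points,'' via ``each intersection contributing at least $1$'' to $(\phi^{*}K_X-K_{X'})\cdot s_{\infty *}S$. This does not parse: $(\phi^{*}K_X - K_{X'})\cdot S = -\sum a_i (E_i\cdot S)$ is a single intersection number, and the proof of Lemma~\ref{lem:negativeKX'degree} shows $E_i\cdot S\leq 0$ for the relevant subcurves, so each point of contact does not contribute positively to this quantity, and nothing here directly bounds a number of points. The paper sidesteps this entirely by arguing indirectly: it \emph{assumes} every dominant rational curve meets $X^{sing}$ in at least $n$ points and derives a degree contradiction. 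Concretely, it degenerates a $2r$-free genus-$g$ curve via \cite[Proposition 2.2]{JLR25} into a stable map with a genus-$g$ core $C_0$ and $k$ rational trees $T_i$; in each $T_i$ it picks a dominant free rational component $R_i$ (so $-K_{X'}\cdot R_i\geq 2$) which by hypothesis meets the exceptional locus $\geq n$ times, and by Lemma~\ref{lemm:localint} this forces $\geq n-1$ additional connected subtrees of $T_i\setminus R_i$ mapping into the exceptional locus, each with $-K_{X'}$-degree $\geq 1$ by Lemma~\ref{lem:negativeKX'degree}, plus an extra $a_{\min}$ contribution from terminality; with $r$ chosen so that $a_{\min}r > ng+1$, the total degree of $\sum T_i$ exceeds $d$, a contradiction. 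Your sketch has no analogue of this counting: you never explain why the number of exceptional tails attached to the core rational component is at most $n-1$, and the rigid-through-three-points endpoint of bend-and-break does not control this count. Your dominance claim (``the degeneration was carried out through only $n-1$ general points'') is also not justified --- the paper gets dominance for free because it selects $R_i$ outside a locus $Z$ containing all non-dominant rational curves of the relevant degrees.

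You also miss the structural input: the paper needs a degeneration in which the genus-$g$ component survives irreducibly and one attaches \emph{many} ($k\geq r$) rational trees, which is supplied by \cite[Proposition 2.2]{JLR25}, not by a straightforward iterated bend-and-break on a one-parameter family. Finally, the proposal uses Proposition~\ref{prop:freeproperties}(2) to fix $n-1$ general points, which is not how the paper controls the family; the paper never constrains the family through general points, instead running the entire count via degree and using the contradiction hypothesis for the $\geq n$ intersections.
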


\begin{proof}
Set $n = \dim X$.  We may suppose $n \geq 3$, as otherwise $X$ is smooth.  Suppose for a contradiction that every rational curve that deforms in a dominant family meets the singular locus at least $n$ times.

Let $\phi: X' \to X$ be a resolution of singularities of $X$ which is an isomorphism away from the singular locus of $X$.  Let $\{ a_i \}_{i=1}^{t}$ be the discrepancies of the $\phi$-exceptional divisors $\{ E_{i}\}_{i=1}^{t}$ on $X'$ and set $a_{\min} = \min \{ a_1, \ldots, a_{t}, 1\}$.

By Theorem \ref{theo:dltcase}, there is a curve $C$ of genus $g$ such that for any $r \geq 0$ the variety $X$ admits an $r$-free curve $s: C \to X$.  By possibly replacing $C$ by a finite cover we may suppose that $g \geq 2$.  Choose a positive integer $r$ so that $a_{\min}r > ng+1$.  

According to the previous paragraph, we may select an irreducible component $M$ of $\Mor(C,X)$ that generically parametrizes $2r$-free curves. Let $M'$ be the irreducible component of $\Mor(C,X')$ parametrizing the strict transforms $s': C \to X'$ of general curves parametrized by $M$. Let $Z \subset X'$ denote the union of the $\phi$-exceptional divisors and the locus in $X'$ swept out by all families of non-free rational curves whose degree against a fixed ample divisor is at most the degree of $s'_{*}C$. 

We let $d$ denote the $-K_{X}$-degree of the curves parametrized by $M$.  Note that $d$ is also the $-K_{X'}$-degree of the curves parametrized by $M'$.  Since $M$ has the expected dimension  we have 
\begin{equation*}
\dim M = d + (1-g)n.
\end{equation*}

The image of a general curve $s': C \to X'$ parametrized by $M'$ will not be contained in $Z$.  By applying \cite[Proposition 2.2]{JLR25} to $M'$, we see that for $k = \lfloor  \frac{d-n(g-1)}{n+1} \rfloor$ the closure of the image of $M'$ in $\overline{\mathcal{M}}_{g,0}(X)$ parametrizes a stable map $s': C' \to X'$  where $C'$ contains:
\begin{itemize}
\item an irreducible component $C_{0}$ isomorphic to $C$, and
\item $k$ trees of rational curves $T_{i}$ whose image in $X'$ is not contained in $Z$.
\end{itemize}
Because $C$ is $2r$-free, $d \geq (2r + 2g)n$ and thus $k$ is at least $r$.  From the definition of $k$ we also see that $(n+1)k > d-n(g-1)-(n+1) = d - ng-1$.

For each $i$, we can choose a component $R_i \subset T_i$ whose image in $X'$ is not contained in $Z$.  By construction $R_{i}$ deforms in a dominant family on $X'$ and thus  $-K_{X'} \cdot R_{i} \geq 2$.  By assumption, each $R_i$ meets the $\phi$-exceptional locus in at least $n$ points. By Lemma \ref{lemm:localint} there must be at least $n-1$ connected components of $T_i \setminus R_i$ not adjacent to $C_{0}$. By Lemma \ref{lem:negativeKX'degree}, the $-K_{X'}$-degree of each of these connected components must be at least $1$. Moreover, either there are $n$ such connected components, or $R_i$ meets the $\phi$-exceptional locus along a point which is not incident to any of these components. Altogether we see that the $-K_X$-degree of $\phi_* s_* T_i$ is at least $2 + (n-1) + a_{\min}$. It follows that $d$ is at least
\begin{align*}
(n+1+a_{\min})k & = (n+1)k + a_{\min}k \\
& > d-ng-1 + a_{\min}r \\
& > d-ng-1 + ng+1 = d
\end{align*}
This is impossible.
\end{proof}

\begin{proof}[Proof of Theorem \ref{theo:terminalcase}:]
When $X$ is a terminal Fano threefold, Theorem \ref{theo:terminaldimn} shows that there is a dominant family of rational curves on $X$ which meet the singular locus at most $2$ times.  Recall that every terminal Fano threefold has LCIQ singularities (see \cite[Corollary 5.39]{KM98}).  Thus the existence of a free rational curve follows from \cite[5.9 Corollary]{KM99} which shows that any terminal Fano variety that has LCIQ singularities and admits a dominant family of rational curves meeting the singular locus at most twice will also have a free rational curve in its smooth locus.
\end{proof}

\section{Log free curves} \label{sect:logfree}

In this section we extend previous results to the log setting.  Given a $\mathbb{Q}$-Weil divisor $\Delta$, we define the Weil divisor $\lfloor \Delta \rfloor$ by rounding down the coefficients of the irreducible divisors in the formal sum defining $\Delta$.

\subsection{Log tangent bundle and positivity}

Let $(X,\Delta)$ be a projective lc pair. 
There is a smooth open subset $V \subset X$ such that $X \backslash V$ has codimension $\geq 2$ and $\lfloor \Delta \rfloor|_{V}$ is a SNC divisor.  We call the largest such open set $V$ the SNC locus of $(X,\Delta)$.  The log tangent bundle $T_{V}(- \log \lfloor \Delta \rfloor)$ is the kernel of the map $T_{V} \to \oplus_{i} N_{\Delta_{i} \cap V}$, where the $\Delta_{i}$ are the irreducible components of $\lfloor \Delta \rfloor$.

\begin{definition} \label{defi:logtangentbundle}
Let $(X,\Delta)$ be a projective lc pair with SNC locus $V$.  We define the log tangent bundle $T_{(X,\Delta)}^{log}$ to be the unique saturated subsheaf of $T_{X}$ that extends $T_{V}(-\log \lfloor \Delta \rfloor) \subset T_{X}|_{V}$.  
\end{definition}

The log tangent bundle satisfies a natural positivity property.

\begin{theorem}[\cite{Ou23}] \label{theo:logtangentwqp}
Let $(X,\Delta)$ be a $\mathbb{Q}$-factorial projective lc pair over an algebraically closed field of characteristic $0$.  Suppose that $-(K_{X} + \Delta)$ is nef.  For every torsion-free quotient $T_{(X,\Delta)}^{log} \to \mathcal{Q}$ the divisor class $c_{1}(\mathcal{Q})$ is pseudo-effective.
\end{theorem}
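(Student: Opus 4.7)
The plan is to adapt Ou's proof of Theorem \ref{theo:ou} to the log setting by studying the Harder-Narasimhan filtration of $T_{(X,\Delta)}^{log}$ in place of that of $T_{X}$. Since $X$ is $\mathbb{Q}$-factorial, $c_{1}(\mathcal{Q})$ is a $\mathbb{Q}$-Cartier class and by the duality $\Eff^{1}(X) = \Nef_{1}(X)^{\vee}$ it suffices to prove $c_{1}(\mathcal{Q}) \cdot \alpha \geq 0$ for every $\alpha \in \Nef_{1}(X)$, or equivalently that $\mu^{min}_{\alpha}(T_{(X,\Delta)}^{log}) \geq 0$. A local computation on the SNC locus of $(X,\lfloor\Delta\rfloor)$ using the residue sequence $0 \to T_{V}(-\log\lfloor\Delta\rfloor) \to T_{V} \to \bigoplus_{i}(i_{D_{i}})_{*}N_{D_{i}/V} \to 0$ gives $c_{1}(T_{(X,\Delta)}^{log}) = -(K_{X} + \lfloor \Delta \rfloor)$, and this extends across codimension $\geq 2$ since $T_{(X,\Delta)}^{log}$ is reflexive.

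Fix $\alpha \in \Nef_{1}(X)$ and let $0 = \mathcal{F}_{0} \subset \mathcal{F}_{1} \subset \cdots \subset \mathcal{F}_{s} = T_{(X,\Delta)}^{log}$ be the $\alpha$-Harder-Narasimhan filtration. The minimum slope quotient $T_{(X,\Delta)}^{log}/\mathcal{F}_{s-1}$ has first Chern class $K_{\mathcal{F}_{s-1}} - K_{X} - \lfloor\Delta\rfloor$. I would next show that $\mathcal{F}_{s-1}$ is a foliation on $X$. The log tangent sheaf $T_{(X,\Delta)}^{log}$ is itself an involutive subsheaf of $T_{X}$, since on the SNC locus it is generated by vector fields of the form $x_{i}\partial_{x_{i}}$ and $\partial_{z_{j}}$ which are closed under Lie bracket. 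The commutator-versus-slope argument of \cite[Proposition 1.3.32]{Pang15}, used in the proof of Lemma \ref{lemm:trivquotient}, adapts to any involutive saturated subsheaf of $T_{X}$, showing that each term $\mathcal{F}_{i}$ is itself involutive. Moreover, because $\mathcal{F}_{s-1} \subset T_{(X,\Delta)}^{log}$, its local sections preserve the defining ideal of each irreducible component of $\lfloor\Delta\rfloor$, so every such component is $\mathcal{F}_{s-1}$-invariant.

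Finally, I would apply Ou's Theorem 1.10 (as quoted in the proof of Theorem \ref{theo:ou}) to the foliation $\mathcal{F}_{s-1}$ on the pair $(X,\Delta)$:
\[
K_{\mathcal{F}_{s-1}} \cdot \alpha \geq (K_{X} + \Delta_{ver}) \cdot \alpha,
\]
where $\Delta_{ver}$ records the coefficients of $\Delta$ along the $\mathcal{F}_{s-1}$-invariant prime divisors. Since $(X,\Delta)$ is lc, each prime component of $\lfloor\Delta\rfloor$ appears in $\Delta$ with coefficient exactly $1$, and by the previous paragraph is $\mathcal{F}_{s-1}$-invariant; hence $\Delta_{ver} \geq \lfloor\Delta\rfloor$. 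Combining the estimates,
\[
c_{1}(T_{(X,\Delta)}^{log}/\mathcal{F}_{s-1}) \cdot \alpha \geq (\Delta_{ver} - \lfloor\Delta\rfloor) \cdot \alpha \geq 0,
\]
which gives the desired pseudo-effectivity.

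The main obstacle is verifying that Pang's foliation result for the Harder-Narasimhan filtration of $T_{X}$ transfers to the involutive subsheaf $T_{(X,\Delta)}^{log}$. Although the underlying bracket-slope mechanism should go through with only formal changes, one must carefully track that the image of $[\cdot,\cdot]: \wedge^{2}\mathcal{F}_{i} \to T_{X}/\mathcal{F}_{i}$ still lands in $T_{(X,\Delta)}^{log}/\mathcal{F}_{i}$ and that the destabilizing slope estimates remain valid in this relative setting.
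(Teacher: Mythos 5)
Your plan has the right skeleton --- reduce to an $\alpha$-slope estimate, use the Harder--Narasimhan filtration of $T_{(X,\Delta)}^{log}$, and feed the appropriate foliation into Ou's Theorem 1.10 --- and it correctly records the identity $c_{1}(T_{(X,\Delta)}^{log}) = -(K_{X} + \lfloor\Delta\rfloor)$. However, there are two genuine gaps that the paper's proof is structured specifically to avoid.

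First, you work directly with $\mathcal{F}_{s-1}$, the penultimate term of the HN filtration, and assert that the commutator-slope argument of Pang plus \cite{CP19} makes it an (algebraically integrable) foliation. But that argument requires $\mu^{\min}_{\alpha}(\mathcal{F}_{s-1}) > 0$: the vanishing of $\bigwedge^{2}\mathcal{F}_{s-1} \to T_{(X,\Delta)}^{log}/\mathcal{F}_{s-1}$ needs roughly $2\mu^{\min}_{\alpha}(\mathcal{F}_{s-1}) > \mu^{\max}_{\alpha}(T_{(X,\Delta)}^{log}/\mathcal{F}_{s-1})$, which the HN inequality $\mu^{\min}_{\alpha}(\mathcal{F}_{s-1}) > \mu^{\max}_{\alpha}(T_{(X,\Delta)}^{log}/\mathcal{F}_{s-1})$ does \emph{not} yield when $\mu^{\min}_{\alpha}(\mathcal{F}_{s-1}) \leq 0$; and \cite[Theorem 1.4]{CP19} likewise needs positive minimal slope to deliver algebraic integrability, which in turn is what makes Ou's Theorem 1.10 applicable. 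Since $\mu^{\min}_{\alpha}(\mathcal{F}_{s-1}) > 0$ is precisely what is at stake, this is circular. The paper sidesteps this by arguing by contradiction: assuming $\mu^{\min}_{\alpha}(T_{(X,\Delta)}^{log}) < 0$ forces the HN filtration to contain a positive-slope term, and one then works with $\mathcal{G}$, the \emph{last} positive-slope term, so that $\mu^{\min}_{\alpha}(\mathcal{G}) > 0$ holds by construction, $\mu^{\max}_{\alpha}(T_{(X,\Delta)}^{log}/\mathcal{G}) \leq 0$, and $\mu_{\alpha}(T_{(X,\Delta)}^{log}/\mathcal{G}) < 0$ supplies the contradiction. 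Your $\mathcal{F}_{s-1}$ coincides with $\mathcal{G}$ only when the full filtration already has nonnegative slopes.

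Second, your derivation of $\Delta_{ver} \geq \lfloor\Delta\rfloor$ conflates two notions of invariance. You argue that because $\mathcal{F}_{s-1} \subset T_{(X,\Delta)}^{log}$, its local sections preserve the ideal of each component of $\lfloor\Delta\rfloor$, hence those components are invariant. But the foliation to which Ou's inequality applies is the \emph{saturation} $\mathcal{F}$ of $\mathcal{F}_{s-1}$ (equivalently of $\mathcal{G}$) inside $T_{X}$, and saturation can jump across $\lfloor\Delta\rfloor$: the basic example $\mathcal{G} = \langle x\,\partial_{x}\rangle \subset T_{\mathbb{A}^{2}}(-\log \{x=0\})$ has saturation $\mathcal{F} = \langle \partial_{x}\rangle$, whose leaves cross $\{x=0\}$, so $\{x=0\}$ is \emph{horizontal} even though $\mathcal{G}$ preserves its ideal. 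So components of $\lfloor\Delta\rfloor$ can be horizontal, $\Delta_{ver} \geq \lfloor\Delta\rfloor$ can fail, and your final inequality chain breaks. The paper handles exactly this issue by invoking the formula $c_{1}(\mathcal{G}) = c_{1}(\mathcal{F}) - \lfloor\Delta_{hor}\rfloor$ from \cite[Proposition 2.17]{Claudon17}, which produces the identity $c_{1}(T_{(X,\Delta)}^{log}/\mathcal{G}) = -c_{1}(\mathcal{F}) - K_{X} - \lfloor\Delta_{ver}\rfloor$ and then compares with Ou's bound for $-c_{1}(\mathcal{F}) - K_{X} - \Delta_{ver}$. Finally, the paper also passes through a log resolution $\phi: Y \to X$ before applying \cite{CP19} and \cite{Claudon17}, since those results are stated for smooth SNC pairs; your sketch does not address how to invoke them directly on the possibly singular $X$.
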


\cite{Ou23} proves this statement for the orbifold tangent bundle as defined by \cite{CP19}.  One could prove Theorem \ref{theo:logtangentwqp} by comparing the orbifold tangent bundle and $T^{log}_{(X,\Delta)}$ using \cite[D\'efinition 2.10]{Claudon17}.  We will instead prove the desired positivity simply by repeating the arguments of \cite{Ou23}. 

\begin{proof}
Assume for a contradiction that the statement fails.  Then there is a nef curve class $\alpha$ such that $\mu^{min}_{\alpha}(T^{log}_{(X,\Delta)}) < 0$.  On the other hand, since $-K_{X} - \Delta$ is nef the divisor $-K_{X} - \lfloor \Delta \rfloor$ is pseudo-effective and thus $\mu_{\alpha}(T^{log}_{(X,\Delta)}) \geq 0$.   We conclude that the $\alpha$-Harder-Narasimhan filtration of $T^{log}_{(X,\Delta)}$ includes a term with positive slope.  By taking the last such term, we find $\mathcal{G} \subset T^{log}_{(X,\Delta)}$ such that $\mu^{min}_{\alpha}(\mathcal{G}) > 0$, $\mu^{max}_{\alpha}(T^{log}_{(X,\Delta)}/\mathcal{G}) \leq 0$, and $\mu_{\alpha}(T^{log}_{(X,\Delta)}/\mathcal{G}) < 0$.

Let $\phi: Y \to X$ be a log resolution of $(X,\Delta)$ which is an isomorphism over the SNC locus and let $\Delta_{Y}$ denote the sum of the strict transform of $\lfloor \Delta \rfloor$ with the reduced $\phi$-exceptional divisor.  Then $(Y,\Delta_{Y})$ is a smooth SNC pair.  By \cite[Corollary A.22]{GKP14} we see that the terms in the $\phi^{*}\alpha$-Harder-Narasimhan filtration of $T_{Y}(-\log \Delta_{Y})$ will coincide with the terms of the $\alpha$-Harder-Narasimhan filtration of $T^{log}_{(X,\Delta)}$ when restricted to the locus where $\phi$ is an isomorphism.  In particular, there is a subsheaf $\mathcal{G}_{Y} \subset T_{Y}(-\log \Delta_{Y})$ that is isomorphic to $\mathcal{G}$ over the SNC locus of $X$ and satisfies the same slope inequalities as $\mathcal{G}$ on $X$.  By \cite[Theorem 1.4]{CP19} the saturation $\mathcal{F}_{Y}$ of $\mathcal{G}_{Y}$ in $T_{Y}$ defines an algebraically integrable foliation corresponding to a rational map $\pi_{Y}: Y \dashrightarrow Z$.  By \cite[Proposition 2.17]{Claudon17} we have that $c_{1}(\mathcal{G}_{Y}) = c_{1}(\mathcal{F}_{Y}) - \Delta_{Y,hor}$ where $\Delta_{Y,hor}$ denotes the horizontal part of $\Delta_{Y}$ for the rational map $\pi_{Y}$.  

Passing back down to $X$, we see that the saturation of $\mathcal{G}$ inside $T_X$, which we denote by $\mathcal{F}$, is the foliation induced by a rational map $\pi: X \dashrightarrow Z$.  We define $\Delta_{hor}$ and $\Delta_{ver}$ to be the horizontal and vertical parts of $\Delta$ with respect to $\pi$.  Furthermore we know that $c_{1}(\mathcal{G}) = c_{1}(\mathcal{F}) - \lfloor \Delta_{hor} \rfloor$.
On the other hand, \cite[Theorem 1.10]{Ou23} shows that $-c_{1}(\mathcal{F}) - K_{X} - \Delta_{ver}$ is pseudo-effective.  Thus the same is true of $-c_{1}(\mathcal{F}) - K_{X} - \lfloor \Delta_{ver} \rfloor$.  Since we can identify
\begin{align*}
c_{1}(T^{log}_{(X,\Delta)}/\mathcal{G}) = -K_{X} - \lfloor \Delta \rfloor - \left( c_{1}(\mathcal{F}) - \lfloor \Delta_{hor} \rfloor \right)
& = -c_{1}(\mathcal{F}) - K_{X} - \lfloor \Delta_{ver} \rfloor
\end{align*}
this contradicts the fact that $\mu_{\alpha}(T^{log}_{(X,\Delta)}/\mathcal{G}) < 0$.
\end{proof}

Just as with the usual tangent bundle, one can characterize the failure of the PQP for the log tangent bundle via a maximal quotient.  However, we will not need this statement in this paper.

\subsection{Log free curves} 
Let $(X,\Delta)$ be a projective lc pair.  We define the space of log curves associated to $(X,\Delta)$ as follows:

\begin{definition} \label{defi:logcurve}
Let $(X,\Delta)$ be a projective lc pair and let $C$ be a smooth projective curve equipped with a reduced divisor $D$.  Suppose that $\{ \Delta_{i} \}_{i=1}^{k}$ are the irreducible components of $\lfloor \Delta \rfloor$.  For each index $i$ choose an effective divisor $D_{i} \subset C$ whose support is contained in $D$.  We define the space $\Mor^{log}_{\{ D_{i} \}}(C,X)$ to be the locally closed subscheme of $\Mor(C,X)$ consisting of maps $s: C \to X$ such that
\begin{enumerate}
\item the image $s(C)$ is contained in the SNC locus of $(X,\lfloor \Delta \rfloor)$ but not in $\Supp(\lfloor \Delta \rfloor)$, and
\item $s^{*}\Delta_{i} = D_{i}$ for every index $i$.
\end{enumerate}
We will say that a map $s: (C,D) \to (X,\Delta)$ is a log curve when we want to think of it as an element of the appropriate space $\Mor^{log}_{\{ D_{i} \}}(C,X)$.  We write $\Mor^{log}(C,X)$ for the union of the moduli spaces as we vary over all $\{D_{i}\}$.
\end{definition}

Let $V \subset X$ denote the open locus where the pair $(X,\lfloor \Delta \rfloor)$ is SNC.  We can equip $C \times V$ with the log structure defined by the SNC divisor $C \times \lfloor \Delta \rfloor|_{V}$.  By identifying a morphism $s: C \to V$ with its graph, one can identify $\Mor^{log}(C,X)$ as an open substack of the moduli space of log sections $\mathrm{Sec}_{log}(C \times V/C)$ as constructed in \cite[Section 3.1]{CLT25}.  According to \cite[Discussion after Equation (3.5)]{CLT25}, the log deformation theory of $s$ is controlled by the cohomology groups of the pullback of $T_{V}(-\log \lfloor  \Delta \rfloor )$, or equivalently, by the cohomology groups of $s^{*}T^{log}_{(X,\Delta)}$.

\begin{definition} \label{defi:rfreelogcurve}
Let $(X,\Delta)$ be a projective lc pair and let $C$ be a smooth projective curve of genus $g$.  Fix an $r \geq 0$.  A log curve $s: (C,D) \to (X,\Delta)$ is $r$-free if $\mu^{min}(s^{*}T_{(X,\Delta)}^{log}) \geq 2g(C) + r$. 

We say that $s: (C,D) \to (X,\Delta)$ is an unmixed $r$-free log curve if additionally for every pair of distinct irreducible components $\Delta_{i},\Delta_{j}$ of $\lfloor \Delta \rfloor$ the preimages $s^{*}\Delta_{i}$ and $s^{*}\Delta_{j}$ are disjoint.  (In other words, $s(C)$ is only allowed to meet the boundary $\lfloor \Delta \rfloor$ along codimension $1$ strata.)
\end{definition}

\begin{lemma} \label{lemm:maketransverse}
Let $(X,\Delta)$ be a projective lc pair over an algebraically closed field of characteristic $0$.  Suppose that $s: (C,D) \to (X,\Delta)$ is an $r$-free log curve.  Then there is a log resolution $\phi: Y \to X$ of $(X,\lfloor \Delta \rfloor)$ and an effective divisor $\Delta_{Y}$ on $Y$ such that $\phi: (Y,\Delta_{Y}) \to (X,\Delta)$ is log \'etale on a neighborhood of $s(C)$ and the strict transform of $s$ on $Y$ is an unmixed $r$-free curve with respect to $(Y,\Delta_{Y})$.
\end{lemma}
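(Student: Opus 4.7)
The plan is to construct $\phi \colon Y \to X$ as an iterated blowup along strata of the boundary met by $s(C)$, thereby separating the images of different components of $\lfloor \Delta \rfloor$ along the strict transform of $s$. Since $s(C)$ is contained in the SNC locus $V$ of $(X, \lfloor \Delta \rfloor)$ and $\lfloor \Delta \rfloor$ is a proper closed subset of $X$, the intersection $s(C) \cap \lfloor \Delta \rfloor$ consists of finitely many points $\{p_1, \ldots, p_N\}$. I would first take a log resolution $Y_0 \to X$ of $(X, \lfloor \Delta \rfloor)$ that is an isomorphism over $V$, so that $s$ lifts unchanged to a log curve into $Y_0$. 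I would then iteratively blow up points of the strict transform of $s(C)$ at which multiple components of the current boundary meet; after blowing up such a point $p$, the strict transforms of the components through $p$ meet the new exceptional divisor $E$ along pairwise disjoint subvarieties, so the strict transform of $s$ at the preimage of $p$ either meets only $E$ or meets $E$ together with at most one strict transform. Iterating this produces a $\phi \colon Y \to X$ with the property that for every $q \in C$ the image $\tilde s(q)$ lies on at most one component of $\phi^{-1}(\lfloor \Delta \rfloor)$.

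Next, I would define $\Delta_Y$ by
\begin{equation*}
\Delta_Y := \phi^{-1}_{*}\Delta + \sum_{E \subset \phi^{-1}(\lfloor \Delta \rfloor)} E,
\end{equation*}
where the sum ranges over $\phi$-exceptional prime divisors lying in the preimage of $\lfloor \Delta \rfloor$. Since the strict transform inherits the original coefficients (all in $[0,1]$) and each exceptional divisor appears with coefficient $1$, the divisor $\Delta_Y$ is effective and $\lfloor \Delta_Y \rfloor$ coincides with the reduced total transform $\phi^{-1}(\lfloor \Delta \rfloor)_{\mathrm{red}}$, which is SNC near $\tilde s(C)$ by construction. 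Because $\phi$ is a composition of blowups of strata of an SNC divisor with every new exceptional divisor added to the boundary, it is log smooth in the toroidal sense, and the natural map
\begin{equation*}
\phi^{*} T^{log}_{(X,\Delta)} \longrightarrow T^{log}_{(Y, \Delta_Y)}
\end{equation*}
is an isomorphism on a neighborhood of $\tilde s(C)$; this is the required log étaleness. Pulling back along $\tilde s$ yields $\tilde s^{*} T^{log}_{(Y, \Delta_Y)} \cong s^{*} T^{log}_{(X, \Delta)}$, so the slope bound $\mu^{\min}(\tilde s^{*} T^{log}_{(Y, \Delta_Y)}) \geq 2g(C) + r$ is inherited, and unmixedness holds by the separation arranged above.

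The main obstacle is the termination of the iterative blowup procedure. After a single blowup at a bad point $p$, the strict transform $\tilde s$ can still meet two components of the new boundary when $s$ was tangent to one of the original components through $p$, so additional blowups may be required. Termination follows from an embedded-resolution-style induction: at each bad point $\tilde p \in \tilde s(C)$ an invariant such as $\sum_i \mult_{\tilde s^{-1}(\tilde p)} \tilde s^{*}\tilde \Delta_i$ can be made to strictly drop after blowing up, so only finitely many blowups are needed over each of the finitely many original bad points. Once termination is in hand, the isomorphism of log tangent sheaves is a local computation on the SNC blowup, reducing to the well-known fact that for a blowup of an intersection of boundary components the sheaf of log differentials pulls back when the exceptional divisor is added to the boundary with coefficient $1$.
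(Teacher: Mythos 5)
The central step of your argument does not work as stated. You propose to iteratively blow up \emph{points} of $s(C)$ at which several boundary components meet, and then assert that the resulting map is log \'etale because ``$\phi$ is a composition of blowups of strata.'' These two claims contradict each other when $\dim X \geq 3$: if $\Delta_1$ and $\Delta_2$ meet along a codimension-$2$ stratum $Z$ and $p \in Z$, then $\{p\}$ is not a stratum of the boundary, and blowing up $p$ is \emph{not} a log blowup. Concretely, with $\Delta_1 = \{x=0\}$, $\Delta_2 = \{y=0\}$ in $\mathbb{A}^3$ and $\phi = \mathrm{Bl}_0$, one computes $K_Y + \Delta_Y = \phi^*(K_X + \Delta_1 + \Delta_2) + E$, so $\phi^*\Omega_X(\log\lfloor\Delta\rfloor) \to \Omega_Y(\log\Delta_Y)$ has nontrivial cokernel along $E$ and is not an isomorphism. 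As a result $\tilde s^{*}T^{log}_{(Y,\Delta_Y)} \not\cong s^{*}T^{log}_{(X,\Delta)}$ and the $r$-free slope bound need not be inherited — the very thing your proof needs. The fix is to blow up the closure of the minimal stratum of $\lfloor\Delta_Y\rfloor$ passing through $p$, not $p$ itself; that blowup \emph{is} log \'etale, which is exactly what the paper does.

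A secondary inaccuracy: after blowing up, it is not true that the strict transforms of the components through $p$ meet $E$ along pairwise disjoint subvarieties. For a blowup of a point in $\mathbb{A}^3$ with $\Delta_1 = \{x=0\}$, $\Delta_2 = \{y=0\}$, the intersections $\widetilde\Delta_1 \cap E$ and $\widetilde\Delta_2 \cap E$ are two lines in $E = \mathbb{P}^2$ meeting at $[0:0:1]$, and a curve like $t \mapsto (t^2, t^2, t)$ has its strict transform pass through precisely that point, so it meets $E$, $\widetilde\Delta_1$, and $\widetilde\Delta_2$ simultaneously. Your termination argument via the local invariant $\sum_i \mult_{\tilde s^{-1}(\tilde p)}\tilde s^{*}\tilde\Delta_i$ is sound (one checks it drops by $(m-1)\cdot d$ where $m$ is the number of components through $\tilde p$ and $d$ is the minimal order) and in fact fills in a detail the paper leaves implicit — but that invariant should be tracked for stratum blowups, where the same computation goes through. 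Once you replace point blowups by stratum blowups, the remainder of your argument (the choice of $\Delta_Y$ as strict transform plus reduced exceptional, log \'etaleness, and pullback of the slope bound) matches the paper's proof.
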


In characteristic $p$, one can accomplish the same goal simply by repeatedly blowing up strata of the boundary, but we will not need such a result.

\begin{proof}
For any log resolution $\phi: Y \to X$ of $(X,\lfloor \Delta \rfloor)$ we define $\Delta_{Y}$ to be the sum of the strict transform of $\lfloor \Delta \rfloor$ with the reduced $\phi$-exceptional divisor. 

Let $\phi: Y \to X$ be any log resolution of $(X,\lfloor \Delta \rfloor)$ that is an isomorphism over the SNC locus.  Thus the strict transform $s_{Y}$ of $s$ is $r$-free.  If $s_{Y}$ is not unmixed, then we let $\psi: \widetilde{Y} \to Y$ denote the blow-up of any minimal dimension strata of $\lfloor \Delta_{Y} \rfloor$  that meets every log deformation of $s_{Y}$.  Since this blow-up is log \'etale, the strict transform $s_{\widetilde{Y}}$ is again $r$-free.  After repeating this construction finitely many times we obtain the desired birational model.
\end{proof}

The following lemma verifies the basic properties of free log curves.

\begin{lemma} \label{lemm:logfreeprops}
Let $(X,\Delta)$ be a projective lc pair.  Suppose that $s: (C, D) \to (X, \Delta)$ is an $r$-free log curve with $r \geq 0$.
\begin{enumerate}
\item If $s$ is unmixed, then for any codimension $\geq 2$ closed subset $Z \subset X$ a general log deformation of $s$ will be disjoint from $Z$.
\item If $M \subset \Mor^{log}(C,X)$ is the irreducible component containing $s$ then the $(\lfloor r \rfloor + 1)$-fold evaluation map $\psi: C^{\times  (\lfloor r \rfloor   + 1)} \times M \to C^{\times  (\lfloor r \rfloor   + 1)} \times  X^{\times ( \lfloor r \rfloor   + 1)}$ is dominant. 
\end{enumerate}
\end{lemma}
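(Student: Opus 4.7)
The plan is to adapt the standard deformation-theoretic arguments for free curves, using $s^{*}T_{(X,\Delta)}^{log}$ in place of $s^{*}T_{X}$. First I would record the deformation theory: the tangent and obstruction spaces to $\Mor^{log}_{\{D_{i}\}}(C,X)$ at $[s]$ are $H^{0}(C, s^{*}T_{(X,\Delta)}^{log})$ and $H^{1}(C, s^{*}T_{(X,\Delta)}^{log})$ respectively. Since $\mu^{min}(s^{*}T_{(X,\Delta)}^{log}) \geq 2g(C) + r > 2g(C) - 2$, Serre duality combined with semistability forces $H^{1}$ to vanish, so $M$ is smooth of the expected dimension at $[s]$; the same bound also makes $s^{*}T_{(X,\Delta)}^{log}$ globally generated.

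For part (1), assume $s$ is unmixed, so the divisors $D_{i} = s^{*}\Delta_{i}$ are pairwise disjoint on $C$; since the $D_{i}$ are fixed across $M \subset \Mor^{log}_{\{D_{i}\}}(C,X)$, every deformation remains unmixed. I would analyze the universal evaluation $\ev : C \times M \to X$ pointwise. For $p \in C$ with $s(p) \notin \lfloor \Delta \rfloor$, the fiber $s^{*}T_{(X,\Delta)}^{log}|_{p}$ is the full tangent space $T_{s(p)}X$, so global generation implies $\ev_{p} : M \to X$ is smooth at $[s]$. For $p \in D_{i}$, the divisor condition $s'^{*}\Delta_{i} = D_{i}$ forces $\ev_{p}(M) \subset \Delta_{i}$, and the fiber $s^{*}T_{(X,\Delta)}^{log}|_{p} = T_{s(p)}\Delta_{i}$, so global generation makes the induced map $\ev_{p} : M \to \Delta_{i}$ smooth at $[s]$. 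Given $Z \subset X$ of codimension $\geq 2$, the locus $\Sigma = \{(p,[s']) : s'(p) \in Z\} \subset C \times M$ decomposes into a piece lying over $C \setminus \bigcup D_{i}$ (with fibers of codimension $\geq 2$ in $M$, since $Z \setminus \lfloor \Delta \rfloor$ has codimension $\geq 2$ in $X$) and finitely many pieces lying over points of $\bigcup D_{i}$ (each of codimension $\geq 1$ in $M$, since $Z \cap \Delta_{i}$ has codimension $\geq 1$ in $\Delta_{i}$). A routine dimension count then gives $\dim \Sigma \leq \dim M - 1$, so its projection is a proper subvariety of $M$ and a general $s' \in M$ avoids $Z$.

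For part (2), set $k = \lfloor r \rfloor + 1$ and consider $\psi : C^{\times k} \times M \to C^{\times k} \times X^{\times k}$. It suffices to show that for a generic $k$-tuple $(p_{1}, \ldots, p_{k})$ the evaluation map $M \to X^{\times k}$ sending $[s'] \mapsto (s'(p_{1}), \ldots, s'(p_{k}))$ is dominant at $[s]$. Genericity forces $s(p_{i}) \notin \lfloor \Delta \rfloor$, so $s^{*}T_{(X,\Delta)}^{log}|_{p_{i}} = T_{s(p_{i})}X$, and the differential is the restriction map $H^{0}(C, s^{*}T_{(X,\Delta)}^{log}) \to \bigoplus_{i} T_{s(p_{i})}X$. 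Using the short exact sequence
\[
0 \to s^{*}T_{(X,\Delta)}^{log}(-D) \to s^{*}T_{(X,\Delta)}^{log} \to s^{*}T_{(X,\Delta)}^{log}|_{D} \to 0
\]
with $D = p_{1} + \cdots + p_{k}$, surjectivity of this restriction reduces to the vanishing $H^{1}(C, s^{*}T_{(X,\Delta)}^{log}(-D)) = 0$. Twisting by $-D$ shifts $\mu^{min}$ by $-k$, so $\mu^{min}(s^{*}T_{(X,\Delta)}^{log}(-D)) \geq 2g(C) + r - k \geq 2g(C) - 1 > 2g(C) - 2$, and the vanishing follows again from Serre duality and semistability.

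The main obstacle is describing the log deformation theory precisely at points where $s$ meets the boundary: the unmixed hypothesis in part (1) is exactly what lets us work on each $\Delta_{i}$ separately and safely ignore the higher-codimension strata of $\lfloor \Delta \rfloor$.
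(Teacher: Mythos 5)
Your proposal is correct and follows essentially the same approach as the paper: part (1) is a dimension count on the incidence locus using that the differential of $\ev_p$ has full rank for $p \notin D$ and drops rank by exactly one for $p$ in one of the (pairwise disjoint, thanks to unmixedness) $D_i$; part (2) reduces surjectivity of $d\psi$ to the vanishing $h^1(C, s^*T^{log}_{(X,\Delta)}(-p_1 - \cdots - p_{\lfloor r\rfloor + 1})) = 0$, which follows from the slope bound via Serre duality. One small imprecision worth flagging: you write ``the fiber $s^*T^{log}_{(X,\Delta)}|_p = T_{s(p)}\Delta_i$'' at a boundary point $p \in D_i$, but the fiber of the pulled-back log tangent bundle is $n$-dimensional; the correct statement is that the \emph{image} of $s^*T^{log}_{(X,\Delta)}|_p \to T_{s(p)}X$ is $T_{s(p)}\Delta_i$ (in local SNC coordinates the frame $x_1\partial_{x_1}, \partial_{x_2}, \dots, \partial_{x_n}$ has $x_1\partial_{x_1}$ evaluating to zero at $x_1 = 0$). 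This is what drives the codimension-one drop in your dimension count, and with that correction the argument matches the paper's. You also make explicit the choice $s(p_i) \notin \lfloor\Delta\rfloor$ in part (2), which the paper leaves implicit; that is a helpful clarification, since for $p_i \in D$ the map $s^*T^{log}|_{p_i} \to T_{s(p_i)}X$ is not surjective and the $h^1$ vanishing alone would not give surjectivity of $d\psi$.
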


\begin{proof}
We first explain (1). Observe that unmixed log free curves only meet the smooth locus of $\lfloor  \Delta \rfloor$. Let $M^{\circ}$ be the open locus in $M$ parametrizing $r$-free log curves.  For $x \in X$, consider the set $U_{p,x} \subset M^{\circ} \times C$ of pairs $(s,p)$ with $s(p)=x$.    For $x \in X^{sing} \cup \lfloor \Delta \rfloor^{sing}$, $U_{p,x}$ is empty.  For $x \in X \setminus \lfloor \Delta \rfloor$, $U_{p,x}$ has dimension at most $\dim M - \dim X$ because the map $U_{p,x} \to X$ is smooth due to the $r$-free assumption.  For $x \in \lfloor \Delta \rfloor^{sm}$, $U_{p,x}$ has dimension at most $\dim M - \dim X + 1$ because the map $T^{log}_{(X, \Delta)}|_p \to T_X|_p$ has image of codimension 1; however, $U_{p,x}$ will be empty if $p \notin D$.  Taking the union over all $p \in C$, we get a set $U_x$ of dimension at most $\dim M - \dim X + 1$. Taking the union over all $x \in Z$, we get a set $U_{Z}$ of dimension at most $\dim M - 1$. Since $U_Z$ does not dominate $M$, the result follows. 

To see (2), it suffices to find a single point $(s, p_1, \dots, p_{ \lfloor r \rfloor +1})$ where the map $d\psi$ is surjective.  In other words, it suffices to find $(s, p_1, \dots, p_{ \lfloor r \rfloor+1})$ with
\begin{equation*}
h^1(C,s^*T^{log}_{(X,\Delta)}(-p_1-\dots-p_{ \lfloor r \rfloor+1})) = 0.
\end{equation*}
The existence of such an $s$ follows immediately from the hypothesis that $s$ is $r$-free: Serre duality guarantees the vanishing of $h^{1}$ for any choice of $\lfloor r \rfloor + 1$ points on $C$.
\end{proof}

\subsection{Existence of log free curves}
The next result is the analogue of Theorem \ref{theo:maintheoremcharp} in the log setting.

\begin{theorem} \label{theo:logcharp}
Let $(X,\Delta)$ be a projective lc pair of dimension $n$ over an algebraically closed field.  Suppose that $H$ is an ample divisor such that $\mu^{min}_{H^{n-1}}(T^{log}_{(X,\Delta)}) > 0$.  Then there is a smooth projective curve $C$ and a finite set $D \subset C$ such that for every $r \geq 0$ there is an $r$-free log curve $s: (C,D) \to (X,\Delta)$. 
\end{theorem}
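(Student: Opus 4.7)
The plan is to mimic the proof of Theorem \ref{theo:maintheoremcharp}, substituting $T^{log}_{(X,\Delta)}$ for $T_X$ throughout. That proof combined Proposition \ref{prop:lccurveexists} (a starting positive curve via Flenner/Langer) with Theorem \ref{theo:positivetofree} (boosting positivity via Frobenius plus, in characteristic $0$, a spread-out and deformation argument), and both steps should adapt to the log setting.

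First I would prove a log version of Proposition \ref{prop:lccurveexists}. Since the non-SNC locus of $(X,\lfloor\Delta\rfloor)$ has codimension at least $2$, a Bertini-type argument shows that for $m \gg 0$, a general complete intersection curve $C$ of members of $|mH|$ is smooth, lies entirely in the SNC locus, meets each component $\Delta_i$ of $\lfloor\Delta\rfloor$ transversally in finitely many points, and avoids the pairwise intersections $\Delta_i \cap \Delta_j$. Applying \cite[1.2 Theorem]{Flenner84} in characteristic $0$, respectively \cite[0.1 Theorem]{Langer24} in characteristic $p$, to the torsion-free sheaf $T^{log}_{(X,\Delta)}$, the restriction satisfies $\mu^{min}(s^*T^{log}_{(X,\Delta)}) = \mu^{min}_{H^{n-1}}(T^{log}_{(X,\Delta)}) > 0$. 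Setting $D_i := s^*\Delta_i$ and $D := \bigcup_i D_i$, this presents $s : (C, D) \to (X, \Delta)$ as an unmixed log curve with strictly positive minimal slope.

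Next I would prove a log version of Theorem \ref{theo:positivetofree}. In characteristic $p$, the Frobenius composition $s \circ F^k : C \to X$ has the same set-theoretic image and the same set-theoretic preimages of the $\Delta_i$ (the divisors $D_i$ are simply replaced by $F^{k*}D_i$), so it remains an unmixed log curve; by Lemma \ref{lemm:frobpullback} applied to the vector bundle $s^*T^{log}_{(X,\Delta)}$, for $k$ large enough we obtain $\mu^{min}((s \circ F^k)^*T^{log}_{(X,\Delta)}) \geq 2g(C) + r$, which is precisely $r$-freeness. In characteristic $0$, I would spread the data $(X,\Delta,C,s,\{D_i\})$ out to a model over $\Spec(R)$ for a finitely generated integral $\mathbb{Z}$-algebra $R$ with fraction field finitely generated over $\mathbb{Q}$, use \cite[Theorem 5]{Nitsure11} to keep the Harder--Narasimhan slopes of the restricted log tangent bundle constant across fibers, run the Frobenius argument on a closed fiber $\fp$ to produce an $r$-free log curve there, and then deform back to the generic fiber.

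The main obstacle is this last deformation step: I need the log analogue of the Kollár machinery underlying Theorem \ref{theo:positivetofree}. The key input is the log deformation theory recalled just before Definition \ref{defi:rfreelogcurve}, due to \cite{CLT25}, which identifies log deformations with sections of $s^*T^{log}_{(X,\Delta)}$ and the obstructions with $H^1$ of the same. Thus the $r$-freeness of $s_\fp \circ F^k$ yields vanishing obstructions, so the irreducible component of the relative log Mor-stack $\Mor^{log}_R(\mathcal{C},\mathcal{X})$ through $s_\fp \circ F^k$ has the expected dimension at $\fp$, dominates $\Spec(R)$ by upper semicontinuity of fiber dimensions, and a general geometric generic point provides the desired $r$-free log curve $s : (C, D) \to (X, \Delta)$.
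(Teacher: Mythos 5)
Your proposal is correct and follows essentially the same route as the paper: start from a Flenner/Langer general complete intersection curve in the SNC locus with positive minimal log slope, boost to $r$-free via Frobenius (directly in characteristic $p$, and after spreading out and invoking Nitsure's semicontinuity in characteristic $0$), and deform back to the generic fiber using the identification of log deformations with $H^0$ and obstructions with $H^1$ of $s^*T^{log}_{(X,\Delta)}$. The paper's only extra concreteness is realizing $\Mor^{log}_R(\mathcal{C},\mathcal{X})$ as a subscheme of $\Mor_R(\mathcal{C},\mathcal{X})$ cut out by $\deg s_\fp^*\lfloor\Delta\rfloor$ equations near $s_\fp$ to run the Koll\'ar-style dimension bound, which is exactly the expected-dimension argument you describe.
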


\begin{proof}
Let $i: C \to X$ denote a general curve obtained as a complete intersection of elements of $|m_{i}H|$ for a tuple of sufficiently positive integers $m_{i}$.  We may ensure that $i(C)$ will lie in the SNC locus of $(X,\Delta)$.  Since the log tangent bundle is a subsheaf of the tangent bundle, we also know that $\mu^{min}_{H^{n-1}}(T_{X}) > 0$.  Thus by a repeated application of \cite[1.2 Theorem]{Flenner84} (in characteristic $0$) or by applying Theorem \ref{theo:normalsemistability} and arguing as in Proposition \ref{prop:lccurveexists} (in characteristic $p$) we may also ensure that $i^{*}T_{(X,\Delta)}^{log}$ is ample.

In characteristic $0$, as in the proof of Theorem \ref{theo:positivetofree} we spread out to characteristic $p$ by constructing:
\begin{itemize}
\item a ring $R$ that is finitely generated over $\mathbb{Z}$,
\item a family $\mathcal{X} \to \Spec(R)$ equipped with a boundary $\Delta_{R}$ and an open locus $\mathcal{U}$, and
\item a smooth projective curve $\mathcal{C}$ over $\Spec(R)$ equipped with a morphism $i: \mathcal{C} \to \mathcal{X}$ and with a reduced divisor $\mathcal{D} \subset \mathcal{C}$ such that $i^{-1}(\lfloor \Delta_{R} \rfloor) \subset \mathcal{D}$.
\end{itemize}
After shrinking $\Spec(R)$ we may suppose that $i_{\mathfrak{p}}^{*}T_{(\mathcal{X}_{\mathfrak{p}},\Delta_{R,\mathfrak{p}})}^{log}$  is ample for every $\fp \in \Spec(R)$.  When $\fp$ is a maximal ideal of $R$, Lemma \ref{lemm:frobpullback} shows that by precomposing by an iterated Frobenius we can ensure that every positive rank quotient of the pullback of $T_{(\mathcal{X}_{\mathfrak{p}},\Delta_{R,\mathfrak{p}})}^{log}$ has slope at least $2g(C) + r$.  Since the residue field at $\fp$ is finite, after perhaps increasing the power of the Frobenius we obtain an $r$-free log curve  $s_{\mathfrak{p}}: \mathcal{C}_{\mathfrak{p}} \to \mathcal{X}_{\mathfrak{p}}$ such that $s_{\mathfrak{p}}^{-1}( \lfloor \Delta_{R,\mathfrak{p}} \rfloor)$ continues to lie in $\mathcal{D}_{\mathfrak{p}}$.  

Arguing as in \cite[p95]{Kollar}, one may construct a space of relative log maps $\Mor^{log}_{R}(\mathcal{C}, \mathcal{X})$ that is cut out from the relative morphism scheme $\Mor_{R}(\mathcal{C},\mathcal{X})$ by $\deg s_{\mathfrak{p}}^* \lfloor \Delta_{R,\mathfrak{p}} \rfloor$ equations in a neighborhood of $s_{\mathfrak{p}}$.  Thus any irreducible component $M$ of $\Mor^{log}_{R}(\mathcal{C}, \mathcal{X})$ that contains $s_{\fp}$ has dimension at least $H^{0}(\mathcal{C}_{\fp}, s_{\fp}'^{*}T^{log}_{(\mathcal{X}_{\fp},\Delta_{R,\fp})}) + \dim_{\fp}(\Spec(R))$  as in \cite[I.2.17 Theorem]{Kollar}.  Because $s_{\mathfrak{p}}$ is an $r$-free log curve, it follows that $\Mor^{log}(\mathcal{C}_{\mathfrak{p}}, \mathcal{X}_\mathfrak{p})$ must have the expected dimension at $s_{\mathfrak{p}}$, and hence $M$ must dominate $\Spec R$.  A general log deformation of $s_{\mathfrak{p}}$ in the generic fiber of $M$ yields an $r$-free log curve $s: (C,D) \to (X,\Delta)$ by \cite[Theorem 5]{Nitsure11}.

In characteristic $p$, arguing as in the proof of Theorem \ref{theo:positivetofree} we spread out to a finite field, precompose by a suitable iterated Frobenius, and then repeat the characteristic $0$ argument to find an $r$-free log curve $s$ on the generic fiber.
\end{proof}

In characteristic $0$, we prove the analogue of Theorem \ref{theo:lcandantinef} in the log setting.

\begin{theorem} \label{theo:maintheoremlog}
Let $(X,\Delta)$ be a projective $\mathbb{Q}$-factorial lc pair over an algebraically closed field of characteristic $0$.  Suppose that $-(K_{X} + \Delta)$ is nef.  Then the following conditions are equivalent:
\begin{enumerate}
\item The log tangent bundle admits no positive rank quotient with numerically trivial first Chern class.
\item $(X,\Delta)$ admits a $1$-free log curve.
\item For every $m > 0$ we have $h^{0}(V,\Omega_{V}(\log \lfloor \Delta \rfloor)^{\otimes m}) = 0$ where $V \subset X$ is the SNC locus of the pair $(X,\lfloor \Delta \rfloor)$.
\end{enumerate}

If these equivalent conditions hold, then there is a smooth projective curve $C$ and a finite set $D \subset C$ such that for every $r \geq 0$ there is an $r$-free log curve $s: (C,D) \to (X,\Delta)$. 
\end{theorem}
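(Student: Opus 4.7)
I would prove the cyclic implications $(1) \Rightarrow (2) \Rightarrow (3) \Rightarrow (1)$, in close parallel to the proof of Theorem \ref{theo:lcandantinef}, but substituting each input by its log analogue: Theorem \ref{theo:logtangentwqp} in place of Theorem \ref{theo:ou}, and Theorem \ref{theo:logcharp} in place of Proposition \ref{prop:lccurveexists} together with Theorem \ref{theo:positivetofree}. The final clause about the uniform curve $C$ producing $r$-free log curves for every $r$ comes for free from Theorem \ref{theo:logcharp} once the slope condition on $T^{log}_{(X,\Delta)}$ has been established.

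For $(1) \Rightarrow (2)$: by Theorem \ref{theo:logtangentwqp}, every positive rank torsion-free quotient $\mathcal{Q}$ of $T^{log}_{(X,\Delta)}$ has pseudo-effective $c_1(\mathcal{Q})$; combined with (1), $c_1(\mathcal{Q})$ is pseudo-effective and not numerically trivial. Since $X$ is $\mathbb{Q}$-factorial, $c_1(\mathcal{Q})$ is $\mathbb{Q}$-Cartier, and a nonzero pseudo-effective $\mathbb{Q}$-Cartier divisor pairs strictly positively with $H^{n-1}$ for any ample $H$. Therefore $\mu^{min}_{H^{n-1}}(T^{log}_{(X,\Delta)}) > 0$, and Theorem \ref{theo:logcharp} supplies a smooth projective curve $C$, a finite set $D \subset C$, and for every $r \geq 0$ an $r$-free log curve $s: (C,D) \to (X,\Delta)$. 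The case $r=1$ gives (2), and the uniform curve gives the final clause of the theorem.

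For $(2) \Rightarrow (3)$: let $s: (C,D) \to (X,\Delta)$ be a $1$-free log curve, so $\mu^{min}(s^{*}T^{log}_{(X,\Delta)}) \geq 2g(C)+1 > 0$ and thus $s^{*}T^{log}_{(X,\Delta)}$ is an ample vector bundle on $C$. By Lemma \ref{lemm:logfreeprops}(2) the evaluation map on log deformations of $s$ is dominant, so through a general point $x$ of the SNC locus $V$ there is a log deformation $s_x: C \to V$ with $x \in s_x(C)$ and $s_x^*T_V(-\log \lfloor \Delta \rfloor) = s_x^*T^{log}_{(X,\Delta)}$ ample. For any section $\omega \in H^0(V, \Omega_V(\log \lfloor \Delta \rfloor)^{\otimes m})$, the pullback $s_x^*\omega$ lies in a tensor power of the dual of an ample bundle on $C$; in characteristic $0$ this has $\mu^{max} < 0$ and hence no nonzero sections, so $\omega$ vanishes at $x$. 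Since $x$ ranges over a dense subset of $V$, we get $\omega \equiv 0$.

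For $(3) \Rightarrow (1)$: choose a log resolution $\phi: Y \to X$ of $(X,\lfloor \Delta \rfloor)$ that is an isomorphism over $V$, and let $\Delta_Y$ be the strict transform of $\lfloor \Delta \rfloor$ together with the reduced $\phi$-exceptional divisor. Restricting sections of $\Omega_Y(\log \Delta_Y)^{\otimes m}$ to $\phi^{-1}(V) \cong V$ gives the vanishing $h^0(Y, \Omega_Y(\log \Delta_Y)^{\otimes m}) = 0$ for all $m>0$. Via the inclusion $\Omega_Y^1 \hookrightarrow \Omega_Y(\log \Delta_Y)$ this forces $h^0(Y, \Omega_Y^1) = 0$, hence $\mathrm{Pic}^0_{Y} = 0$, so numerically trivial Cartier divisors on $Y$ are torsion. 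Pulling back $\mathbb{Q}$-Cartier divisors from $X$ to $Y$ via $\phi$ and using $\mathbb{Q}$-factoriality of $X$, the same holds for numerically trivial Weil divisors on $X$. Now suppose for contradiction there is a torsion-free quotient $T^{log}_{(X,\Delta)} \to \mathcal{Q}$ with $c_1(\mathcal{Q}) \equiv 0$. Then $\det(\mathcal{Q})^{\otimes m} \cong \mathcal{O}_X$ as reflexive sheaves for some $m$. Composing the surjection $T^{log}_{(X,\Delta)}{}^{\otimes m\,\mathrm{rk}(\mathcal{Q})} \twoheadrightarrow \mathcal{Q}^{\otimes m\,\mathrm{rk}(\mathcal{Q})}$ with the antisymmetrization $\mathcal{Q}^{\otimes m\,\mathrm{rk}(\mathcal{Q})} \twoheadrightarrow \det(\mathcal{Q})^{\otimes m}$ and restricting to $V$ produces a surjection $T_V(-\log \lfloor \Delta \rfloor)^{\otimes m\,\mathrm{rk}(\mathcal{Q})} \twoheadrightarrow \mathcal{O}_V$ on the open subset where $\mathcal{Q}$ is locally free; dualizing and extending across the codimension-$\geq 2$ bad locus using reflexivity of the locally free sheaf $\Omega_V(\log \lfloor \Delta \rfloor)^{\otimes m\,\mathrm{rk}(\mathcal{Q})}$ gives a nonzero global section contradicting (3).

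The main obstacle I anticipate is not conceptual but organizational: consistently translating between the sheaf $T^{log}_{(X,\Delta)}$ on $X$ and its restriction $T_V(-\log \lfloor \Delta \rfloor)$ on the SNC locus, so that the slope estimates, the restriction maps from $Y$ to $V$, and the reflexive extension arguments in $(3) \Rightarrow (1)$ all line up. Each individual comparison is standard, but the care with which Theorem \ref{theo:logtangentwqp} and Theorem \ref{theo:logcharp} are stated suggests some bookkeeping must be done explicitly.
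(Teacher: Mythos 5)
Your proposal matches the paper's proof essentially step for step: it uses the same cycle $(1)\Rightarrow(2)\Rightarrow(3)\Rightarrow(1)$, invokes Theorem \ref{theo:logtangentwqp} and Theorem \ref{theo:logcharp} for $(1)\Rightarrow(2)$, uses ampleness of $s^*T^{log}_{(X,\Delta)}$ at general points of $V$ for $(2)\Rightarrow(3)$, and runs the log resolution plus $\mathrm{Pic}^0$-torsion plus non-vanishing tensor field argument for $(3)\Rightarrow(1)$. The only differences are that you spell out a few routine steps (why PQP implies positive $H^{n-1}$-slope; the reflexive extension across codimension two) which the paper leaves implicit.
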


\begin{proof}
(1) $\implies$ (2): Under assumption (1),  $T_{(X,\Delta)}^{log}$ satisfies the PQP by Theorem \ref{theo:logtangentwqp}.  Thus for any ample divisor $H$ we have $\mu^{min}_{H^{n-1}}(T^{log}_{(X,\Delta)}) > 0$. We conclude the existence of $r$-free curves by Theorem \ref{theo:logcharp}.

(2) $\implies$ (3): Under assumption (2), for any general point $x \in V$ there is a curve $s: C \to V$ such that $s^{*}T_{(X,\Delta)}^{log} = s^{*}\Omega_{V}(\log \lfloor \Delta \rfloor)^{\vee}$ is ample.  Thus any section of $\Omega_{V}(\log \lfloor \Delta \rfloor)^{\otimes m}$ must vanish at $x$.  Since this is true for a dense set of points $x \in V$, we conclude that the space of global sections is $0$.

(3) $\implies$ (1): Let $\phi: Y \to X$ be a log resolution of $(X,\lfloor \Delta \rfloor)$ that is an isomorphism over $V$ and let $\Delta_{Y}$ be the sum of the strict transform of $\lfloor \Delta \rfloor$ with the reduced $\phi$-exceptional divisor.  Then we have $h^{0}(Y, \Omega_{Y}(\log \Delta_{Y})^{\otimes m}) = 0$ for every $m > 0$ as well.  Since $\Omega_{Y}$ is a subsheaf of $\Omega_{Y}(\log \Delta_{Y})$, we conclude that $\Pic^{0}_{Y/\mathbb{C}} = 0$, so every numerically trivial Cartier divisor on $Y$ is torsion.  By appealing to $\mathbb{Q}$-factoriality and pulling back divisors from $X$ to $Y$, we see the same is true of numerically trivial Weil divisors on $X$.

Suppose we had a surjection $s^{*}T_{(X,\Delta)}^{log} \to \mathcal{Q}$ where $c_{1}(\mathcal{Q})$ was numerically trivial.  Since a torsion sheaf always has pseudo-effective first Chern class, we may suppose $\mathcal{Q}$ is torsion-free.  Then for any $m > 0$ we have a chain of surjections on $V$
\begin{equation*}
T_{V}(- \log \lfloor \Delta \rfloor)^{\otimes m \rk(\mathcal{Q})} \to \mathcal{Q}|_{V}^{\otimes m \rk(\mathcal{Q})} \to \left(\bigwedge\nolimits^{\rk(\mathcal{Q})} \mathcal{Q}|_{X^{sm}} \right)^{\otimes m}.
\end{equation*}
In particular, when $m$ satisfies $mc_{1}(\mathcal{Q}) \sim 0$ we obtain a surjection $T_{V}(- \log \lfloor \Delta \rfloor)^{\otimes m\rk(\mathcal{Q})} \to \mathcal{O}_{V}$.  Dually, we get a non-vanishing tensor field on $V$, a contradiction.
\end{proof}

\begin{proof}[Proof of Theorem \ref{theo:logcase}:]
Follows immediately from Theorem \ref{theo:maintheoremlog}.
\end{proof}

\section{Finiteness of fundamental groups} \label{sect:fundgroups}

We next address finiteness of the fundamental group.  For the smooth locus of a klt Fano variety over $\mathbb{C}$, this famous question has been addressed in various settings -- the \'etale fundamental group, the topological fundamental group, and the regional fundamental group of a klt singularity -- by \cite{Xu14}, \cite{GKP16a}, \cite{BGO17}, \cite{TX17}, \cite{CRST18}, \cite{BCRGST19}, \cite{Braun21},  \cite{CarvajalStabler}.

In this section we use the existence of $1$-free curves to prove finiteness of fundamental groups.  (It was known previously that the existence of a $1$-free rational curve in $X^{sm}$ could be used to prove this statement -- see e.g.~\cite[7.5 Lemma]{KM99}.)
The proof relies on four properties of (topological or \'etale) fundamental groups over an algebraically closed field $k$.
\begin{enumerate}[label=(\Alph*)]
\item Suppose $X,Y$ are smooth varieties over $k$ with $X$ proper.  Given geometric points $x \in X$, $y \in Y$, the natural homomorphism $\pi_{1}(X \times Y,(x,y)) \to \pi_{1}(X,x) \times \pi_{1}(Y,y)$ is an isomorphism.  Furthermore the projection map is split by the inclusion of $\{x\} \times Y$.
\item Suppose $f: X \to Y$ is a dominant morphism of normal varieties over $k$.  For any $k$-point $x \in X$ the image $f_{*}\pi_{1}(X,x)$ has finite index in $\pi_{1}(Y,f(x))$.
\item We have $\pi_{1}(\Spec(k),\Spec(k)) = 0$.
\item Let $X$ be a variety over $k$.  If $x,x'$ are two $k$-points of $X$ then there is an isomorphism between $\pi_{1}(X,x)$ and $\pi_{1}(X,x')$ that is defined up to conjugacy.
\end{enumerate}
For topological fundamental groups, (A) is established by \cite[Proposition 1.12]{Hatcher02} and (B) is established by \cite[2.9 Proposition]{Kollar95b}.  For \'etale fundamental groups, (A) is established by \cite[Expos\'e X Corollaire 1.7]{SGA}.  The argument for (B) seems to be well-known: one can write $f$ as a composition of an open embedding, a morphism with geometrically connected fibers, and a generically \'etale map.  The first two maps induce surjections on \'etale fundamental groups and the last induces a map whose image has finite index.  Properties (C) and (D) are well-known in both settings.

\begin{theorem}  \label{theo:firstetalefundgroup}
Let $U$ be a smooth variety over an algebraically closed field $k$ and let $x \in U$ be a $k$-point.  Suppose that $C$ is a smooth projective $k$-curve, $M$ is a $k$-variety, and $e: C \times M \to U$ is a $k$-morphism such that the induced map
\begin{equation*}
r = (p_{1},p_{2},e \circ p_{13}, e \circ p_{23}): C \times C \times M \to C \times C \times U \times U
\end{equation*}
is dominant, where $p_{i}$ and $p_{ij}$ denote projections onto the corresponding factors.  Then:
\begin{enumerate}
\item The \'etale fundamental group $\pi_{1}^{et}(U,x)$ is finite.
\item If $k=\mathbb{C}$, the topological fundamental group $\pi_{1}(U,x)$ is finite.
\end{enumerate}
\end{theorem}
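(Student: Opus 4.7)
My plan is to compute the homomorphism $r_{*}$ explicitly on fundamental groups using property (A), apply property (B) to the dominant $r$ to obtain a finite-index subgroup of $\pi_{1}(C \times C \times U \times U)$, and then extract finiteness of $G := \pi_{1}(U,x)$ from a diagonal index condition in $G \times G$. Since a dominant morphism restricted to a dense open remains dominant, I first replace $M$ by its smooth locus and assume $M$ is smooth. Fix $c_{0} \in C$ and $m_{0} \in M$; by property (D) I may assume $e(c_{0},m_{0}) = x$, and I take the diagonal base point $(c_{0},c_{0},m_{0}) \in C \times C \times M$, whose image under $r$ is $(c_{0},c_{0},x,x)$. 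Iteratively applying property (A) with the smooth proper factor $C$ in turn, I identify
\begin{equation*}
\pi_{1}(C \times C \times M) \cong A \times A \times B, \qquad \pi_{1}(C \times C \times U \times U) \cong A \times A \times H,
\end{equation*}
where $A := \pi_{1}(C,c_{0})$, $B := \pi_{1}(M,m_{0})$, and $H := \pi_{1}(U \times U,(x,x))$. Crucially, I do \emph{not} try to decompose $H$ as $G \times G$, since the K\"unneth formula can fail for \'etale fundamental groups of non-proper varieties.

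Under these identifications, $r_{*}$ acts as the identity on the first two factors, and on the third coordinate one computes
\begin{equation*}
r_{*}(a_{1}, a_{2}, b) = \bigl( a_{1}, a_{2}, \, \iota_{1}(\alpha(a_{1})) \cdot \iota_{2}(\alpha(a_{2})) \cdot \delta(b) \bigr),
\end{equation*}
where $\alpha : A \to G$ is induced by $c \mapsto e(c,m_{0})$, the maps $\iota_{1},\iota_{2} : G \to H$ come from the sections $u \mapsto (u,x)$ and $u \mapsto (x,u)$, and $\delta : B \to H$ is induced by $m \mapsto (e(c_{0},m), e(c_{0},m))$. Since $r_{*}$ is a group homomorphism whose source is a direct product with pairwise commuting factors, the subgroups $\iota_{1}(\alpha(A))$, $\iota_{2}(\alpha(A))$, and $\delta(B)$ pairwise commute in $H$, and $L := r_{*}(A \times A \times B)$ is a subgroup of $A \times A \times H$. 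By property (B), $L$ has finite index in $A \times A \times H$. Projecting $A \times A \times H \to A \times A$ sends $L$ surjectively onto $A \times A$, while the kernel of this projection meets $L$ in exactly $\delta(B)$, since $r_{*}(0,0,b) = (0,0,\delta(b))$. The standard index formula for the split extension $1 \to H \to A \times A \times H \to A \times A \to 1$ then yields $[A \times A \times H : L] = [H : \delta(B)]$, so $\delta(B)$ has finite index in $H$.

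Finally, the two projections $U \times U \to U$ induce a continuous surjective homomorphism $\Pi : H \to G \times G$, surjective because $\iota_{1},\iota_{2}$ provide compatible sections. Under $\Pi$, a generator $\delta(b)$ maps to $(\beta(b),\beta(b))$, where $\beta : B \to G$ is induced by $m \mapsto e(c_{0},m)$. Setting $K := \beta(B) \subset G$, this says $\Pi(\delta(B)) = \Delta_{K} := \{(k,k) : k \in K\}$. Since surjective homomorphisms do not increase subgroup index,
\begin{equation*}
[G \times G : \Delta_{K}] \leq [H : \delta(B)] < \infty.
\end{equation*}
The set bijection $G \times G \to G \times (G/K)$ given by $(g_{1},g_{2}) \mapsto (g_{1} g_{2}^{-1}, g_{2} K)$ carries $\Delta_{K}$ to $\{1\} \times \{K\}$, so $[G \times G : \Delta_{K}] = |G| \cdot [G : K]$, which forces $G$ to be finite. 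The same argument proves both (1) and (2), using (A)--(D) for \'etale respectively topological fundamental groups. The main technical subtlety I expect is avoiding any K\"unneth decomposition of $H$, which I sidestep by keeping $H$ as a single object and using only the surjection $\Pi : H \twoheadrightarrow G \times G$.
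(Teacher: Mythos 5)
Your proof is correct, and it takes a genuinely different route from the paper's. The paper's argument is geometric: it twice restricts to subfamilies of curves with prescribed incidence (first $S$, the curves sending a marked point $c$ to $x$; then $R'$, curves through a second general point $y$ and meeting $s(C)$ at a marked point), and combines properties (B) and (C) with the fact that $e$ collapses the constant sections $\{c\}\times S$ and $\{c_{1}\}\times R'$ to a point, to show that $s_{*}\pi_{1}(C,c)$ has finite index in $\pi_{1}(U,x)$ and then that it contains (a conjugate of) a trivial finite-index subgroup. Your argument is instead direct and group-theoretic: you compute $r_{*}$ explicitly on the K\"unneth decompositions $A\times A\times B \to A\times A\times H$ (correctly leaving $H=\pi_{1}(U\times U)$ as an undecomposed block, since $U$ is not proper and (A) requires a proper factor), read off $L\cap H=\delta(B)$ from the explicit formula for $r_{*}$, obtain $[H:\delta(B)]<\infty$ from (B) and the index identity for $L N = G'$, push forward to $G\times G$ via the canonical surjection $\Pi$, and close with the diagonal observation that $[G\times G:\Delta_{K}]<\infty$ forces $|G|<\infty$. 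Both proofs exploit the diagonal structure of $r$ and the contracted section, but yours packages the whole thing in a single index computation rather than two separate geometric restrictions; the paper's version avoids computing $r_{*}$ componentwise and avoids the diagonal-coset lemma at the end, at the cost of the auxiliary constructions $S$ and $R'$. One small imprecision: the map $(g_{1},g_{2})\mapsto(g_{1}g_{2}^{-1},g_{2}K)$ is not a bijection $G\times G\to G\times(G/K)$ -- it is $|K|$-to-one, with fibers exactly the right cosets of $\Delta_{K}$ -- but the induced map on $(G\times G)/\Delta_{K}$ is a bijection, so the formula $[G\times G:\Delta_{K}]=|G|\cdot[G:K]$ is correct; alternatively, the even simpler well-defined surjection $(g_{1},g_{2})\Delta_{K}\mapsto g_{1}g_{2}^{-1}$ onto $G$ already gives $|G|\leq[G\times G:\Delta_{K}]<\infty$.
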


The proof incorporates a simplification recommended to us by Koll\'ar.

\begin{proof}
We start with (1). Fix a general $k$-point $y \in U$ and a general $k$-point $c \in C$.  Let $S \subset M$ be  the set of curves sending $c$ to $y$, i.e., $S = p_{3}(r^{-1}(C \times \{c\} \times U \times \{y\}))$.  By assumption, for a general $k$-point $z \in C$ the map $e: C \times S \to U$ induces a dominant map $e: \{ z \} \times S \to U$.  We may replace $S$ by a smooth open subset of a suitably chosen irreducible component while still keeping the dominance of $e: \{z\} \times S \to U$.

By (D) there is an isomorphism $\pi_{1}^{et}(C,c) \cong \pi_{1}^{et}(C,z)$ which is determined up to an inner automorphism.  Let $s$ be any $k$-point of $S$.  Appealing to (A) we obtain an isomorphism
$$\pi_{1}^{et}(C \times S, c \times s) \cong \big( \pi_{1}^{et}(C,c) \times \pi_{1}^{et}(S,s) \big ) \cong \big (\pi_{1}^{et}(C,z) \times \pi_{1}^{et}(S,s)\big ) \cong \pi_{1}^{et}(C \times S, z \times s)$$ 
that is determined up to conjugacy on the first factor of the middle terms.  Using the splitting property in (A), we conclude that $e_{*}\pi^{et}_{1}(\{ z \}  \times S,z \times s)$ in $\pi_{1}^{et}(U,e(z \times s))$ is conjugate to $e_{*}\pi^{et}_{1}(\{c\} \times S, c \times s)$ in $\pi^{et}_{1}(U,y)$.  Since $e$ contracts $\{c\} \times S$, by (C) the latter image is trivial.  On the other hand, by (B) the subgroup $e_{*}\pi^{et}_{1}(\{z\} \times S, z \times s) \subset \pi^{et}_{1}(U,e(z \times s))$ has finite index.  We conclude that $\pi_{1}^{et}(U,e(z \times s))$ is finite; the claim for arbitrary $x \in U$ follows from (D).  

The proof of (2) is the same as the proof of (1).
\end{proof}

\begin{proof}[Proof of Theorem \ref{theo:firstfundgroups}:]
Theorem \ref{theo:lcandantinef} proves the existence of a $1$-free curve $s: C \to X^{sm}$.  Proposition \ref{prop:freeproperties}.(2) shows that deformations of $s$ go through two general points of $X^{sm}$.  We conclude the desired statement by Theorem \ref{theo:firstetalefundgroup}.(2).
\end{proof}

\begin{proof}[Proof of Theorem \ref{theo:etalefundgroup}:]
Follows from Proposition \ref{prop:freeproperties}.(2) and Theorem \ref{theo:firstetalefundgroup}.(1).
\end{proof}

\subsection{Log setting}
We next prove a finiteness statement for fundamental groups in the log setting.  Note that in characteristic $p$ even the simplest example $U = \mathbb{A}^{1} = \mathbb{P}^{1} \backslash \{\infty\}$ will have infinite \'etale fundamental group.  We must instead work with the curve-tame \'etale fundamental group as defined in \cite{Wiesend08}, \cite{KS10}.  We write $\pi_{1}^{t}(X,x) := \pi_{1}^{t}(X/\Spec(k),x)$ for this group.

The following statement is almost identical to Theorem \ref{theo:firstetalefundgroup}.  The key difference is that the curve $C$ is only quasiprojective, since in the log setting we can no longer hope to find projective curves in the open locus $U$.  The proof relies on the results of Appendix \ref{sect:appendix}.

\begin{theorem}  \label{theo:secondetalefundgroup}
Let $U$ be a smooth variety over an algebraically closed field $k$ and let $x \in U$ be a $k$-point.  Suppose that $C$ is a smooth irreducible quasi-projective $k$-curve, $M$ is a $k$-variety, and $e: C \times M \to U$ is a $k$-morphism such that the induced map
\begin{equation*}
r = (p_{1},p_{2},e \circ p_{13}, e \circ p_{23}): C \times C \times M \to C \times C \times U \times U
\end{equation*}
is dominant, where $p_{i}$ and $p_{ij}$ denote projections onto the corresponding factors.
\begin{enumerate}
\item The tame fundamental group $\pi_{1}^{t}(U,x)$ is finite.
\item If $k=\mathbb{C}$, then the topological fundamental group $\pi_{1}(U,x)$ is finite.
\end{enumerate}
\end{theorem}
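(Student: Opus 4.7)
The plan is to run the argument of Theorem \ref{theo:firstetalefundgroup} essentially line-by-line, with the curve-tame fundamental group $\pi_1^t$ replacing $\pi_1^{et}$ throughout part (1). The abstract ingredients (A)--(D) stated at the start of the section must be re-verified in the tame setting. Properties (C) and (D) are formal. Property (B) transfers by the same factorization of a dominant morphism of normal varieties into an open immersion, a morphism with geometrically connected fibres, and a generically étale morphism, each of which behaves well for tame fundamental groups. The point that requires genuine new input is the Künneth property (A): because $C$ is only quasi-projective, the SGA 1 splitting is unavailable, and in positive characteristic wild ramification along the boundary of a compactification of $C$ could a priori produce tame covers of $C \times M$ which do not come from the factors. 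This is precisely the role of Appendix \ref{sect:appendix}, which supplies the Künneth isomorphism $\pi_1^t(C \times M, (c,m)) \cong \pi_1^t(C,c) \times \pi_1^t(M,m)$ canonically split by the inclusions of a fibre and a section. For part (2), the corresponding statement for topological fundamental groups of smooth quasi-projective complex varieties is standard algebraic topology.

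With (A)--(D) in hand, the argument of Theorem \ref{theo:firstetalefundgroup} transcribes without alteration. First I would fix general $k$-points $x \in U$ and $c \in C$ and let $S$ be a smooth open subset of an irreducible component of $p_3(r^{-1}(C \times \{c\} \times U \times \{x\}))$, chosen so that the restriction $e \colon C \times S \to U$ remains dominant. By (B), $e_* \pi_1^t(C \times S, (c,s))$ has finite index in $\pi_1^t(U,x)$, and because $e$ contracts the section $\{c\} \times S$ to $x$, combining (A) with (C) identifies this image with $s_* \pi_1^t(C,c)$. Hence $s_*\pi_1^t(C,c)$ has finite index in $\pi_1^t(U,x)$.

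To conclude, I would mimic the second half of the argument of Theorem \ref{theo:firstetalefundgroup}: fix general $c_1, c_2 \in C$ and $y \in U$, form $R = p_3(r^{-1}(\{c_1\} \times \{c_2\} \times \{y\} \times s(C)))$, let $R'$ be the fibre product of $e|_{\{c_2\} \times R}$ with $s$ over $s(C)$, and shrink $R'$ so that it is smooth and the induced map $\{c_2\} \times R' \to C$ is dominant. The resulting $e' \colon C \times R' \to U$ contracts $\{c_1\} \times R'$ to $y$, while $e'|_{\{c_2\} \times R'}$ factors as $s \circ f$ for some dominant $f$. Applying (B) to $f$ and using (A), (C), (D) to compare the factor subgroups of $\pi_1^t(C \times R', c_2 \times r)$ and $\pi_1^t(C \times R', c_1 \times r)$ under the change-of-basepoint isomorphism exhibits $s_*\pi_1^t(C,c)$ as conjugate to the trivial subgroup inside $\pi_1^t(U,y)$, hence trivial. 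Combined with the previous paragraph, this gives $\pi_1^t(U,x)$ finite. Part (2) is identical with $\pi_1$ in place of $\pi_1^t$.

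The main obstacle is entirely concentrated in securing (A) for $\pi_1^t$. Topological or formal arguments do not suffice once properness is dropped, and it is the tame Künneth formula of the appendix, with its careful treatment of tameness along a boundary divisor in a compactification, that allows the entire deduction to go through.
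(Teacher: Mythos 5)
Your proposal follows the same route as the paper: re-establish properties (A)--(D) for $\pi_1^t$, with (A) secured by the appendix's tame K\"unneth theorem (the essential new input since $C$ is not proper), and then transcribe the argument of Theorem \ref{theo:firstetalefundgroup} verbatim. The only small divergence is at (B), where the paper uses a shorter route — the functorial surjection $\pi_1^{et}(X,x) \to \pi_1^t(X,x)$ makes finite-index automatic from the \'etale case — whereas you propose re-running the factorization argument directly for tame fundamental groups; both work, but the paper's shortcut avoids re-verifying each factor in the tame setting.
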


\begin{proof}
(1) We have the following properties of tame fundamental groups. The only difference from the corresponding properties for \'etale fundamental groups is that no properness assumption is necessary in (A).
\begin{enumerate}[label=(\Alph*)]
\item Suppose $X,Y$ are smooth varieties over $k$.  Given geometric points $x \in X$, $y \in Y$, Theorem \ref{theo:mainappendix} shows that the natural homomorphism $\pi_{1}^{t}(X \times Y,(x,y)) \to \pi_{1}^{t}(X,x) \times \pi_{1}^{t}(Y,y)$ is an isomorphism.  Furthermore the projection map is split by the inclusion of $\{x\} \times Y$.
\item Suppose $f: X \to Y$ is a dominant morphism of normal varieties over $k$.  Then for any $k$-point $x \in X$, $f_{*}\pi_{1}^{t}(X,x)$ has finite index in $\pi_{1}^{t}(Y,f(x))$.  This follows from the corresponding fact for \'etale fundamental groups via the existence of a functorial surjection $\pi_{1}^{et}(X,x) \to \pi_{1}^{t}(X,x)$.
\item We have $\pi_{1}^{t}(\Spec(k),\Spec(k)) = 0$.
\item Let $X$ be a variety over $k$.  If $x,x'$ are two $k$-points of $X$ then there is an isomorphism between $\pi^{t}_{1}(X,x)$ and $\pi^{t}_{1}(X,x')$ that is defined up to conjugacy.
\end{enumerate}
Using these properties, the proof is identical to the proof of Theorem \ref{theo:firstetalefundgroup}.(1).

(2) Follows from the same argument as Theorem \ref{theo:firstetalefundgroup}.(2).
\end{proof}

\begin{proof}[Proof of Theorem \ref{theo:logfundgroup}:]
Let $U = X \backslash \lfloor \Delta \rfloor$.  Let $M \subset \Mor^{log}_{\{D_{i}\}}(C,X)$ denote an irreducible component containing a $1$-free log curve $s: (C,D) \to (X,\Delta)$ and let $e: C \times M \to X$ denote the evaluation map.  Lemma \ref{lemm:logfreeprops}.(2) shows that the induced map $C^{\times 2} \times M \to C^{\times 2} \times X^{\times 2}$ is dominant.

We let $C^{\circ} \subset C$ be the complement of $D$ so that we have a map $e: C^{\circ} \times M \to U^{sm}$.  It is clear that $C^{\circ \times 2} \times M \to C^{\circ \times 2} \times (U^{sm})^{ \times 2}$ is still dominant.  We conclude by applying Theorem \ref{theo:secondetalefundgroup}.
\end{proof}

\subsection{Dichotomy} \label{sect:dichotomy}
Theorem \ref{theo:lcandantinef} and Lemma \ref{lemm:trivquotient} establish a fundamental geometric dichotomy for $\mathbb{Q}$-factorial lc Fano pairs $(X,\Delta)$ over $\mathbb{C}$.  When $T_{X}$ satisfies the PQP, $X$ also satisfies other desirable geometric properties:
\begin{enumerate}
\item $X$ carries a $1$-free curve by Theorem \ref{theo:lcandantinef},
\item $X^{sm}$ has no non-vanishing tensor fields by Theorem \ref{theo:lcandantinef},
\item $X$ is rationally connected by \cite[Theorem 5.2]{Gounelas16}, and
\item the smooth locus of $X$ has finite fundamental group by  Theorem \ref{theo:firstfundgroups}.
\end{enumerate}

On the other hand, when $T_{X}$ does not satisfy the PQP then Lemma \ref{lemm:trivquotient} shows that there is a canonical quotient $\pi : T_{X} \to \mathcal{Q}$ witnessing this failure where $c_1(\mathcal{Q})$ is numerically trivial.  The kernel of $\pi$ is an algebraically integrable foliation and every almost free curve on $X$ is contained in a fiber of the corresponding rational map.  Indeed, as an almost free curve $s: C \to T_{X}$ can be moved off of any codimension $2$ subset, we may assume $s(C)$ is contained in the smooth locus of the map induced by $T_{X} \to \mathcal{Q}$.  Only the trivial factors in $s^{*}T_{X}$ can admit non-trivial maps to $s^{*}\mathcal{Q}$, showing that $s(C)$ must be contained in a fiber.

The following example clarifies that the dichotomy discussed above is not controlled by the rationally connected property or by the fundamental group.

\begin{example} \label{exam:pqpfailsexample}
We give an example of an lc Fano variety $X$ such that $X$ is rationally connected, $X^{sm}$ has finite fundamental group, and $T_{X}$ fails the PQP.  (In other words, the PQP for $T_{X}$ implies the first two properties but is not necessary for them to hold.)

Let $B$ be a smooth sextic in $\mathbb{P}^{2}$.  Starting from $\mathbb{P}^{2} \times \mathbb{P}^{1}$, we choose a section $S = \mathbb{P}^{2} \times \{ 0 \}$ and define $F = B \times \mathbb{P}^{1}$.  We then perform the following blow-ups:
\begin{itemize}
\item blow up $S \cap F$ to obtain an exceptional divisor $E_{1} \cong B \times \mathbb{P}^{1}$;
\item blow up the intersection of $E_{1}$ with the strict transform of $F$ to get an exceptional divisor $E_{2}$.
\end{itemize}
The resulting variety $\widetilde{X}$ is equipped with a morphism $\widetilde{\pi}: \widetilde{X} \to \mathbb{P}^{2}$ obtained by composing the blow-downs and the projection map.  We denote by $\widetilde{S}, \widetilde{F}, \widetilde{E}_{1}$ the strict transforms on $\widetilde{X}$ of $S,F,E_{1}$.  We also denote by $H_{1}$ and $H_{2}$ the pullbacks to $\widetilde{X}$ of $\mathcal{O}(1)$ from $\mathbb{P}^{2}$ and $\mathbb{P}^{1}$ respectively.  Every fiber of $\pi$ over $B$ consists of three components: a reduced component $\ell_{\widetilde{F}}$ lying in $\widetilde{F}$, a reduced component $\ell_{\widetilde{E}_{1}}$ lying in $\widetilde{E}_{1}$ and a multiplicity two component $\ell_{E_{2}}$ lying in $E_2$. 

We next construct a  variety $X$ by contracting certain divisors on $\widetilde{X}$.  Define the divisor $D = 6H_{1} + 2H_{2} - \widetilde{E}_{1} - 2E_{2}$.  Note that $D$ can also be expressed as $D = \widetilde{F} + 2H_{2}$ and as $D  = 2\widetilde{S} + \widetilde{E}_{1} + 6H_{1}$.  From these expressions we see that $D$ is big and nef and that the base locus of $|D|$ is empty.  Since $D$ has vanishing intersection against $\ell_{\widetilde{F}}$, $\ell_{\widetilde{E}_{1}}$, and every curve in $\widetilde{S}$, the linear series $|D|$ defines a birational morphism $\phi: \widetilde{X} \to X$ that contracts the divisors $\widetilde{F}, \widetilde{E}_{1}, \widetilde{S}$.  In particular $X$ has Picard rank $1$.  Since $\phi^{*}K_{X} = K_{\widetilde{X}} + \widetilde{S} + \frac{1}{2}\widetilde{E}_{1}$ we see that $X$ is a log Fano variety with log canonical singularities.

Note that $\widetilde{\pi}: \tilde{X} \to \PP^2$ induces a rational map $\pi: X \dashrightarrow \mathbb{P}^{2}$ which is a morphism on $X^{sm}$. We claim that this map yields a quotient $T_{X} \to \mathcal{Q}$ such that $c_{1}(\mathcal{Q})$ is numerically trivial.  Indeed, consider the corresponding map $\widetilde{\pi}: \widetilde{X} \to \mathbb{P}^{2}$.  Since the fibers over $B$ have multiplicity $2$ along $E_{2}$, the image of the map $T_{\widetilde{X}} \to \widetilde{\pi}^{*}T_{\mathbb{P}^{2}}$ has first Chern class $3H_{1} - E_{2}$.  Since this divisor is $\mathbb{Q}$-linearly equivalent to $\frac{1}{2}(\widetilde{F} + \widetilde{E}_{1})$, by pushing forward to $X$ we see that $\mathcal{Q}$ has numerically trivial first Chern class.

The previous paragraph showed that $T_{X}$ does not have the PQP.  It is clear that $X$ is rationally connected.  It only remains to verify that $X^{sm}$ has finite fundamental group.  The double cover of $\mathbb{P}^{2}$ ramified over $B$ is a K3 surface $Z$.  Let $U$ denote the normalization of the base change $Z \times_{\mathbb{P}^{2}} X^{sm}$.  Since every fiber of $X^{sm}$ over $B$ has multiplicity $2$, $U \to X^{sm}$ is an \'etale double cover.  Furthermore, $U$ is an open subset of an $\mathbb{A}^{1}$-bundle $Y$ over $Z$ where the complement of $U$ in $Y$ has codimension $\geq 2$.  This implies that $\pi_{1}(U)$, and hence also $\pi_{1}(X^{sm})$, is finite. 
\end{example}

\printbibliography

\appendix


\begin{refsection}
\section{Tame fundamental group of a product \\ (written by Aise Johan de Jong)} \label{sect:appendix}

Setup: we take an algebraically closed field $k$ and we set $S = \Spec(k)$.
When $X$ is a smooth variety over $k$ we set
$$
\pi_1^t(X) = \pi_1^t(X/S, \overline{x})
$$
for some choice of geometric point $\overline{x}$ of $X$
as defined in \cite[Section 7]{KS10}. Correspondingly
we will say a finite \'etale cover of $X$ is {\it tame} if it is curve-tame
in the sense of \cite{KS10}.

\medskip\noindent
If $Y$ is a second smooth variety over $k$, then we denote by $X \times Y$
the product variety. Choose closed points $x \in X$, $y \in Y$. We will
use $x$, $y$, and $(x, y)$ as the base points of our tame fundamental groups.
Functoriality of the tame fundamental group
(discussed in \cite{KS10}) gives homomorphisms
$$
\pi_1^t(X) \to \pi_1^t(X \times Y)
\quad\text{and}\quad
\pi_1^t(Y) \to \pi_1^t(X \times Y)
$$
It turns out that we also have a canonical surjective map
$$
\pi_1^t(X \times Y) \to
\pi_1^t(X) \times \pi_1^t(Y)
\leqno{(*)}
$$
corresponding to the fully faithful functor
$$
\textit{F\'Et}_X \times \textit{F\'Et}_Y \to
\textit{F\'Et}_{X \times Y}, (U, V) \mapsto U \times V
$$
of categories of (tame) finite \'etale coverings.
This looks strange at first sight; in terms of Galois groups, it
corresponds to the fact that the absolute Galois group of the
function field $k(X \times Y)$ maps onto the product of the
absolute Galois groups of $k(X)$ and $k(Y)$.

\begin{theorem} \label{theo:mainappendix}
The map (*) is an isomorphism.
\end{theorem}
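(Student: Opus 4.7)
Since surjectivity of $(*)$ is already established, my plan is to prove injectivity by setting up a tame analogue of Grothendieck's homotopy exact sequence for the smooth projection $\pi_Y \colon X \times Y \to Y$ with fiber $X$:
\begin{equation*}
\pi_1^t(X) \xrightarrow{\iota_{X*}} \pi_1^t(X \times Y) \xrightarrow{\pi_{Y*}} \pi_1^t(Y) \to 1,
\end{equation*}
where $\iota_X \colon X \cong X \times \{y\} \hookrightarrow X \times Y$ is the inclusion of a fiber over the chosen base point $y \in Y$. The section $\iota_Y \colon Y \to X \times Y$, $y' \mapsto (x,y')$, splits $\pi_Y$, which makes right-exactness immediate; the substantive content is exactness in the middle.

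Granting the exact sequence, the conclusion is formal. The second projection $\pi_X$ is a retraction of $\iota_X$, giving $\pi_{X*} \circ \iota_{X*} = \mathrm{id}$, and $\pi_X \circ \iota_Y$ is the constant map, giving $\pi_{X*} \circ \iota_{Y*} = 1$. For any $g$ in the kernel of both $\pi_{X*}$ and $\pi_{Y*}$, exactness lets us write $g = \iota_{X*}(\alpha)$, and then applying $\pi_{X*}$ forces $\alpha = 1$, whence $g = 1$. Hence $(*)$ has trivial kernel and is therefore an isomorphism. The same setup also shows that $\iota_{Y*}(\beta)$ and $\iota_{X*}(\alpha)$ commute inside $\pi_1^t(X \times Y)$ for all $\alpha,\beta$: the conjugate $\iota_{Y*}(\beta)\iota_{X*}(\alpha)\iota_{Y*}(\beta)^{-1}$ lies in the kernel of $\pi_{Y*}$, hence equals $\iota_{X*}(\alpha')$ for some $\alpha'$, and applying $\pi_{X*}$ shows $\alpha' = \alpha$. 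This is the direct-product structure predicted by the theorem.

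To establish exactness at $\pi_1^t(X \times Y)$, I would recast it as a statement about covers: every connected tame Galois cover $W \to X \times Y$ whose restriction to the fiber $X \times \{y_0\}$ is a trivial cover should be the pullback $\pi_Y^* V$ of some tame cover $V \to Y$. My approach would be to choose smooth projective SNC compactifications $\bar{X} \supset X$ and $\bar{Y} \supset Y$ with boundary divisors $D_X$ and $D_Y$, extend $W$ to a cover of $\bar{X} \times \bar{Y}$ tamely ramified along the SNC divisor $(D_X \times \bar{Y}) \cup (\bar{X} \times D_Y)$ using the Grothendieck--Murre theory of divisorially-tame covers, and then invoke the classical proper Künneth theorem for $\pi_1^{et}$ applied to $\bar{X} \times \bar{Y}$ to produce the desired cover of $\bar{Y}$; restricting back to the open part recovers the cover $V \to Y$.

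The main obstacle will be matching the two notions of tameness at the codimension-two corners $D_X \cap D_Y$ of the SNC boundary of $\bar{X} \times \bar{Y}$: one must check that curve-tame covers of $X \times Y$ in the sense of \cite{Wiesend08, KS10} correspond, under this extension procedure, to divisorially tame covers of the pair $(\bar{X} \times \bar{Y}, (D_X \times \bar{Y}) \cup (\bar{X} \times D_Y))$ in the Grothendieck--Murre sense, and vice versa. Once this dictionary between the two tameness conditions is in place, the remainder of the argument should reduce to proper Künneth plus the standard formalism of tame fundamental groups under pullback by smooth morphisms.
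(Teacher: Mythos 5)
Your overall strategy is genuinely different from the paper's: you aim for a tame homotopy exact sequence for the split fibration $\pi_Y \colon X \times Y \to Y$, from which the direct-product structure follows formally, whereas the paper first establishes the curve case of the K\"unneth formula by hand (Lemma \ref{lemma-curves}, via Abhyankar's lemma and purity on $\bar{X}' \times \bar{Y}'$) and then reduces the general case to curves using the Lefschetz-type Lemma \ref{lemma-lefschetz}. Your formal deduction of injectivity and of commutativity of the two fiber subgroups from the exact sequence is correct. The difficulty is concentrated in the step you defer to the end, and there the plan has genuine gaps.

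First, you assume the existence of smooth projective SNC compactifications $\bar{X}$, $\bar{Y}$. The theorem is stated over an algebraically closed field of arbitrary characteristic, and in positive characteristic such compactifications are not known to exist in general dimension (resolution of singularities is open). The paper sidesteps this entirely by reducing to curves, where smooth projective compactifications exist automatically; its Lemma \ref{lemma-lefschetz} only needs a normal projective closure, which always exists. Second, even granting SNC compactifications, the concluding step is circular: after extending $W$ to a divisorially tame cover of $(\bar{X} \times \bar{Y}, (D_X\times\bar{Y})\cup(\bar{X}\times D_Y))$, you propose to ``invoke the classical proper K\"unneth theorem for $\pi_1^{et}$.'' But that theorem applies to unramified finite \'etale covers of the proper product, and your extended cover is ramified along the boundary divisor. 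What you would actually need is a K\"unneth theorem for the Grothendieck--Murre tame fundamental group of a product of pairs, which is essentially the statement being proved. The paper's curve argument shows how to break this circularity concretely: it pulls $Z$ back along $X' \times Y' \to X \times Y$, kills the ramification via Abhyankar's lemma, and only then uses purity to extend to an honest finite \'etale cover of the compactified product, to which ordinary K\"unneth-type reasoning applies. Some version of this ``un-ramify first'' step, together with the comparison of curve-tameness and divisorial tameness that you correctly identified as a difficulty, would need to be supplied before your plan could close.
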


\cite[Proposition 3]{Hoshi09} gives a related but slightly weaker statement in a different setting.

\begin{proof}
We will reduce this theorem to Lemma \ref{lemma-curves}.

\medskip\noindent
We claim that the images of $\pi_1^t(X)$ and $\pi_1^t(Y)$ in
$\pi_1^t(X \times Y)$ commute. By Lemma \ref{lemma-curves}
this is true for elements in the image of $\pi_1^t(C) \to \pi_1^t(X)$
and $\pi_1^t(D) \to \pi_1^t(Y)$ for any pair of morphisms
$f : C \to X$ and $g : D \to Y$ where $C$ and $D$ are smooth
curves and $x \in f(C)$ and $y \in g(D)$. By Lemma \ref{lemma-lefschetz}
such elements form dense subgroups of $\pi_{1}^t(X)$ and $\pi_{1}^t(Y)$ and this proves the claim.  Thus we have an induced map $\pi_1^t(X) \times \pi_1^t(Y) \to \pi_1^t(X \times Y)$.  It is clear that this map splits the surjection $\pi_1^t(X \times Y) \to
\pi_1^t(X) \times \pi_1^t(Y)$ discussed earlier.

\medskip\noindent
It suffices to show that the map $\pi_1^t(X) \times \pi_1^t(Y) \to \pi_1^t(X \times Y)$ is surjective.
Suppose that $Z \to X \times Y$ is a connected tame Galois cover.
By Lemma \ref{lemma-lefschetz} we can choose a
morphism $g : C \to X \times Y$ where $C$ is a smooth curve
with $(x, y) \in g(C)$ such that $Z \times_{X \times Y} C$ is connected.
Let $C_1$, resp.\ $C_2$ be the normalization of the image of
$C$ in $X$, resp.\ $Y$.  Since $g$ factors through $C_{1} \times C_{2}$, the covering
$Z \times_{X \times Y} (C_1 \times C_2)$ is also connected.  By Lemma \ref{lemma-curves}.
we see that $Z$ defines a connected $\pi_1^t(C_1) \times \pi_1^t(C_2)$ set.
As the action of these groups on Z factors through $\pi_1^t(X) \times \pi_1^t(Y)$, we obtain the claim.
\end{proof}

\begin{lemma}
\label{lemma-curves}
If $X$ and $Y$ are curves, then (*) is an isomorphism.
\end{lemma}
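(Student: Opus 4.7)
The plan is to reduce to the case of smooth projective curves, where the result follows from SGA 1 Exp.~X Cor.~1.7 (which gives $\pi_1^{et}(\bar{X} \times \bar{Y}) \cong \pi_1^{et}(\bar{X}) \times \pi_1^{et}(\bar{Y})$ for products of smooth projective curves; here $\pi_1^t = \pi_1^{et}$ since these compactifications are proper). For the general case, I would use Abhyankar's lemma on a product compactification to trivialize ramification and then invoke the projective result.

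First I would fix smooth projective compactifications $X \subset \bar{X}$ and $Y \subset \bar{Y}$ with reduced boundary divisors $D_X$ and $D_Y$. Then $\bar{X} \times \bar{Y}$ is a smooth projective surface carrying the SNC divisor $D := (D_X \times \bar{Y}) \cup (\bar{X} \times D_Y)$. Since surjectivity of $(*)$ is already established, the task is to prove that every connected finite \'etale Galois tame cover $Z \to X \times Y$ is dominated by a cover of the form $U \times V \to X \times Y$ with $U \to X$ and $V \to Y$ tame; this is equivalent to $(*)$ being injective.

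Given such a $Z$, I would let $\bar{Z} \to \bar{X} \times \bar{Y}$ denote the normalization of $\bar{X} \times \bar{Y}$ in the function field $k(Z)$; this is a normal cover tamely ramified along $D$. Write $n_p$ (resp.\ $m_q$) for the inertia order of $\bar{Z}$ along $\{p\} \times \bar{Y}$ for $p \in D_X$ (resp.\ along $\bar{X} \times \{q\}$ for $q \in D_Y$), which are coprime to the characteristic by tameness. Next, I would construct finite connected tame covers $\bar{X}' \to \bar{X}$ and $\bar{Y}' \to \bar{Y}$ whose ramification indices at $p$ and $q$ are divisible by $n_p$ and $m_q$ respectively, via standard Kummer-type covers of projective curves. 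Abhyankar's lemma applied to the SNC divisor $D$ then shows that the base change $\bar{Z}' := \bar{Z} \times_{\bar{X} \times \bar{Y}} (\bar{X}' \times \bar{Y}') \to \bar{X}' \times \bar{Y}'$ is finite \'etale.

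Since $\bar{X}' \times \bar{Y}'$ is a product of smooth projective curves, SGA 1 Exp.~X Cor.~1.7 yields $\pi_1^{et}(\bar{X}' \times \bar{Y}') \cong \pi_1^{et}(\bar{X}') \times \pi_1^{et}(\bar{Y}')$, so $\bar{Z}'$ is dominated over $\bar{X}' \times \bar{Y}'$ by a product $W_X \times W_Y$ with $W_X \to \bar{X}'$ and $W_Y \to \bar{Y}'$ finite \'etale. The compositions $W_X \to \bar{X}' \to \bar{X}$ and $W_Y \to \bar{Y}' \to \bar{Y}$ are tame covers of the projective curves (the composition of an \'etale cover with a tame cover remains tame), and their restrictions $U$ and $V$ to $X$ and $Y$ are then the desired tame covers; the composite $W_X \times W_Y \to \bar{Z}' \to \bar{Z}$ of covers over $\bar{X} \times \bar{Y}$ restricts over $X \times Y$ to exhibit $U \times V$ dominating $Z$. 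The main technical point will be the joint construction of the Kummer covers $\bar{X}'$ and $\bar{Y}'$ with prescribed ramification, together with the verification of Abhyankar's lemma at the crossing points of the two families of components of $D$; these are standard SNC manipulations but require careful bookkeeping.
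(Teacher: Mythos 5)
Your proposal follows the same broad outline as the paper — compactify, neutralize boundary ramification via base change along covers of the two curves, apply Abhyankar's lemma and purity to extend to a finite \'etale cover of a product of proper curves, and then invoke the K\"unneth formula there — but it has a genuine gap at the step where you claim to ``construct finite connected tame covers $\bar X' \to \bar X$ and $\bar Y' \to \bar Y$ whose ramification indices at $p$ and $q$ are divisible by $n_p$ and $m_q$ respectively, via standard Kummer-type covers.'' Tame covers of a smooth curve with arbitrarily prescribed ramification indices (coprime to the characteristic) do \emph{not} exist in general: for instance $\pi_1^t(\mathbb{A}^1)$ is trivial, so there is no nontrivial tame cover of $\mathbb{P}^1$ branched only at one point, and hence no way to realize $n_p > 1$ in that situation by any cover of $\bar X = \mathbb{P}^1$ branched only over $D_X = \{p\}$. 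Your argument as written does not explain why the particular integers $n_p$, $m_q$ arising from $\bar Z$ are realizable.

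The paper closes exactly this gap by taking $X' = f^{-1}(X \times \{y\})$ and $Y' = f^{-1}(\{x\} \times Y)$: these are tame Galois covers of $X$ and $Y$ coming directly from $Z$, and the Galois assumption forces the ramification index of $X' \to X$ at a boundary point $x_1$ to coincide with the ramification index of $Z$ along the divisor $\{x_1\} \times \bar Y$ (all inertia groups over a fixed point/divisor are conjugate, hence have the same order). Thus the auxiliary covers with the required ramification exist \emph{because they are built out of $Z$ itself}, which is precisely the point your construction misses. Once you replace your abstract Kummer covers by these restrictions, the rest of your argument (Abhyankar's lemma along the SNC boundary, purity to extend over $\bar X' \times \bar Y'$, and the K\"unneth isomorphism for the product of proper curves) goes through and agrees with the paper's proof. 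So: correct reduction strategy and correct use of Abhyankar/purity/K\"unneth, but the existence of the auxiliary covers needs to be justified via the restriction-to-fibers trick rather than asserted.
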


\begin{proof}
Choose smooth projective compactifications $X \subset \bar X$ and
$Y \subset \bar Y$. Let $f : Z \to X \times Y$ be a tame finite \'etale
Galois covering with group $G$ (we do not require connectedness
in this proof).  The covering
$X' = f^{-1}(X \times \{y\}) \to X$ is a tame finite \'etale Galois
covering with group $G$.
Similarly for $Y' = f^{-1}({x} \times Y) \to Y$.
The ramification index of $X' \to X$ at a point
$x_1 \in \overline{X} \setminus X$ is the same as the ramification
index of $Z \to X \times Y$ along the divisor
$\{x_1\} \times \bar Y \subset \bar X \times \bar Y$; here we use
the Galois structure to know that all the ramification indices
above a given point are the same. Similarly for $Y'$.
Thus Abhyankar's lemma shows that the pull back $Z'$ of $Z$
to $X' \times Y'$ does not ramify at all along the generic
points of the boundary of $X' \times Y'$ in $\bar X' \times \bar Y'$.
Here $\bar X'$ and $\bar Y'$ are the smooth projective compactifications of
$X'$ and $Y'$; of course $\bar X' \times \bar Y'$
is the normalization of $\bar X \times \bar Y$ in  the function field of $X' \times Y'$. 
By purity we see that $Z'$ extends to a finite \'etale covering
of $\bar X' \times \bar Y'$. Thus $Z'$ is dominated by a product
$X'' \times Y''$ with $X''$ and $Y''$ tame finite \'etale over $X$ and $Y$.
This proves that the map (*) is injective, whence an isomorphism.
\end{proof}

\begin{lemma}[\cite{EK16}]
\label{lemma-lefschetz}
Let $X$ be a smooth variety over $k$ and $x \in X$ be a closed point.
Let $Z \to X$ be finite tame \'etale with $Z$ connected.
Then there exists a morphism $f : C \to X$ where $C$ is a smooth
curve with $x \in f(C)$ such that $Z \times_X C$ is connected.
\end{lemma}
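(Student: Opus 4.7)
The plan is to cut $X$ down to a curve through $x$ by iterating Bertini-style arguments, after first extending the cover $Z \to X$ to a tame cover of a smooth compactification. First I would choose a smooth projective compactification $\bar X \supset X$ with $D = \bar X \setminus X$ a simple normal crossings divisor, and, using that $Z \to X$ is curve-tame, extend $Z$ to a normal finite surjective cover $\bar Z \to \bar X$ which is tamely ramified along $D$ in the classical sense. Connectedness of $Z$ implies connectedness of $\bar Z$.

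Next I would argue by induction on $d = \dim X$. The case $d = 1$ is trivial, taking $C = X$. For $d \geq 2$, fix a projective embedding $\bar X \hookrightarrow \mathbb{P}^N$ and consider, for $m \gg 0$, the linear subsystem $V_m \subset H^{0}(\bar X, \mathcal{O}(m))$ of sections vanishing at $x$. A general element $s \in V_m$ cuts out a subvariety $H \subset \bar X$ of dimension $d-1$ such that: $H$ passes through $x$; $H$ is irreducible and smooth away from a codimension-two locus; $H$ meets $D$ transversally along the SNC stratification; and $\bar Z \times_{\bar X} H$ is connected. The last property is the crucial input: for a finite surjective cover $\bar Z \to \bar X$ of normal projective varieties with $\bar Z$ connected and $\dim \bar X \geq 2$, the preimage of a general very ample divisor remains connected, which follows from a standard Fulton-Hansen/Bertini connectedness argument via Stein factorization. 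Passing through $x$ imposes a single linear condition, which remains compatible with the other Bertini genericity conditions once $m$ is large enough.

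By Abhyankar's lemma, the restriction $\bar Z \times_{\bar X} H \to H$ is tamely ramified along $D \cap H$, so on the interior it defines a connected tame \'etale cover of $H \cap X$ of dimension $d-1$, still containing $x$. Applying the inductive hypothesis to this cover produces a smooth curve $C$ and a morphism $C \to H \cap X \hookrightarrow X$ with $x$ in its image and $Z \times_X C$ connected.

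The main obstacle will be the simultaneous verification of the three open conditions at each Bertini step: the section must vanish at $x$, the divisor $H$ must meet $D$ transversally so that Abhyankar applies and tameness is preserved, and the preimage in $\bar Z$ must remain connected. Each is open in the restricted linear system $V_m$; the point is to choose $m$ large enough that $V_m$ is base-point-free away from $x$ and separates enough tangent directions to realize all three conditions simultaneously. A secondary technical point is the initial extension of $Z$ to a tame cover $\bar Z$ in positive characteristic, which genuinely requires the curve-tameness hypothesis rather than mere \'etaleness.
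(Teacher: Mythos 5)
Your proposal is attempting to reprove the Lefschetz-type theorem for tame covers from scratch, whereas the paper simply invokes \cite[Proposition 6.2]{EK16} as a black box after a short reduction. These are genuinely different in spirit, and your sketch has two gaps that \cite{EK16} was written precisely to address.

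The most serious gap is your very first step: you assume the existence of a \emph{smooth} projective compactification $\bar X$ with SNC boundary $D$, and then extend $Z$ to a cover $\bar Z \to \bar X$ that is \emph{tamely ramified along $D$ in the classical divisorial sense}. In positive characteristic, smooth SNC compactifications are not available in general (resolution of singularities is open in dimension $\geq 4$), and even granting one, the passage from curve-tameness of $Z \to X$ to classical divisor-tameness of the extension over $\bar X$ is a nontrivial theorem, not a definition-unwinding. This is exactly the subtle point behind the various competing notions of tameness in \cite{KS10}, and it is where the hypothesis of curve-tameness earns its keep. The paper sidesteps both issues: it takes only a \emph{normal} projective closure $\bar X'$ and requires the complete intersection curve to avoid the singular loci of $\bar X'$ and of $D = \bar X' \setminus X'$, and it never needs to extend $Z$ across the boundary because \cite[Proposition 6.2]{EK16} already works in the curve-tame setting.

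A secondary difference is how the base point is handled. You impose ``vanishing at $x$'' as a linear condition on the Bertini system at every inductive step, which forces you to re-verify transversality, irreducibility, and connectedness of the preimage within the restricted subsystem $V_m$. The paper replaces this with a single slick move: blow up $X$ at $x$ first, so that $x$ becomes a divisor, and then a \emph{general} complete intersection curve (with no base-point constraint) automatically meets the exceptional divisor and hence maps to a curve through $x$. This lets the Bertini/Lefschetz input be applied verbatim without modifying the linear system. If you wanted to push your inductive approach through, you would need to (i) work with a normal rather than smooth compactification and argue on the smooth locus, (ii) cite a precise statement (essentially \cite[Proposition 6.2]{EK16} itself) for the tame extension and the connectedness of the hyperplane section of the cover, and (iii) either carry the base-point constraint carefully through the Bertini argument or adopt the blow-up trick. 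At that point you would have reproduced the paper's proof.
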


\begin{proof}
Since $X$ and thus $Z$ are smooth, $Z$ remains connected after passing to a non-empty open subset.
Thus we may assume that $X$ is quasiprojective.  Furthermore if $X'$ denotes the blow-up of $X$ at $x$ then the pull back of $Z$ to $X'$ will remain connected.  Let $\bar{X}'$ denote a normal projective closure of $X'$ and let $C \subset \bar{X}'$ be a smooth complete intersection curve passing through $x$ that is transverse to $D := \bar{X'}\setminus X'$ and disjoint from the singular locus of $D$ and $\bar{X}'$.  By \cite[Proposition 6.2]{EK16} (see \cite{EK16err}), the pull back of $Z$ to $C$ is connected.
\end{proof}

\printbibliography[title={References (for Appendix~\ref{sect:appendix})}]

\end{refsection}

\end{document}